\newtheorem{thm}{}[section]
\theoremstyle{theorem}
\newtheorem{theorem}[thm]{Theorem}
\newtheorem{corollary}[thm]{Corollary}
\newtheorem{lemma}[thm]{Lemma}
\newtheorem{definition}[thm]{Definition}
\newtheorem{remark}[thm]{Remark}
\newcommand{\veps}{\varepsilon}
\newcommand{\p}{\partial}
\begin{document}

\title[Singularities of Equivariant Lagrangian Mean Curvature Flow]{Singularities of Equivariant Lagrangian Mean Curvature Flow}
\author[Albert Wood]{Albert Wood}
\subjclass{53C44, 53D12}

\begin{abstract}
	We study almost-calibrated, $O(n)$-equivariant Lagrangian mean curvature flow in $\mathbb{C}^n$, and prove structural theorems about the Type I and Type II blowups of finite-time singularities. In particular, we prove that any Type I blowup of such a singularity must be a special Lagrangian pair of transversely intersecting planes, any Type II blowup must be the Lawlor neck with the same asymptotes, and these blowups are independent of the choice of rescaling. We also give a partial classification of when singularities occur in the equivariant case, and examine the intermediate scales between the Type I and Type II models.
\end{abstract}

\maketitle

\section{Introduction}\label{sec-intro}

Since the resolution of the Poincar\'e conjecture by use of the Ricci flow with surgery by R. Hamilton and G. Perelman, the study of singularities of geometric flows has become a well-studied field. This is because in order to apply such surgery techniques, an understanding of the nature of possible singularities is necessary. For the mean curvature flow, much progress has been made; for example the work of T. Colding and W. Minicozzi \cite{colding:1} suggests that cylindrical singularities of hypersurface mean curvature flow are generic. Such singularities are Type I, meaning that a sequence of parabolic rescalings of the singularity converges subsequentially and locally smoothly to a self-similarly shrinking mean curvature flow. However, if we instead work with Lagrangian submanifolds (a preserved class under the flow), the singularities look very different. For example, in \cite{wang:1} M-T.\ Wang shows that Type I singularities such as the cylinder are impossible for a large open class of Lagrangian flows. Ambitious conjectures have been made about long-time convergence of Lagrangian mean curvature flow with surgeries, so an understanding of these singularities is important.

In this work we consider the question of which singularities are possible for the Lagrangian mean curvature flow, in the particular case of equivariant submanifolds. As we shall see, these submanifolds are necessarily Lagrangian (Lemma \ref{thm-s1lag}), and so they provide a model case for Lagrangian mean curvature flow. The equivariance essentially reduces the codimension of the flow to 1, since we may work with the quotient of the Lagrangian submanifold under the symmetry - the profile curve. We also restrict to `almost-calibrated' Lagrangian mean curvature flow (as defined in Section \ref{sec-prelim}), which is a condition on the Lagrangian angle. This is a necessary condition for the Thomas-Yau conjecture as reformulated by D.\ Joyce in \cite{joyce:1}, which approximately states that long-time existence and convergence of Lagrangian mean curvature flow is equivalent to a `stability condition', as is true for Hermitian-Yang-Mills flow. When combined with the equivariance, it can be seen that the almost-calibrated condition implies that the flow is non-compact and embedded (Lemmas \ref{lem-noncompact} and \ref{lem-embedded}). It is therefore most natural to work with flows $L_t$ with fixed, `planar' asymptotics, meaning that outside a compact ball $B_R$, the flow $L_t$ may be written as a graph over fixed planes through the origin.

We now go through our results in detail. The first main result, Theorem \ref{thm-typei}, provides a complete classification of singularities for our considered flow. Explicitly, we prove that any singularity of an almost-calibrated equivariant LMCF with planar asymptotics must occur at the origin, and that its Type I blowup is a special Lagrangian transverse pair of planes $P_1 \cup P_2$ which does not depend on the rescaling sequence (see Figure \ref{fig-type1type2}). We also provide a sufficient condition for when such a singularity occurs in Theorem \ref{thm-fts}.

\begin{theorem}\label{theorem1}
	Let $L_t$ be an almost-calibrated, connected $O(n)$-equivariant mean curvature flow in $\mathbb{C}^n$, with planar asymptotics.
	
	Then any finite-time singularity must occur at the origin. Additionally, a Type I blowup of such a singularity must be a special Lagrangian cone consisting of a transverse pair of planes $P_1 \cup P_2$ with identical Lagrangian angle, and the blowup does not depend on the rescaling sequence. The profile curves of these planes intersect at an angle of $\frac{\pi}{n}$.
	
	Additionally, if the asymptotes of the profile curve of $L_0$ span an angle $\beta > \frac{\pi}{n}$, then a finite time singularity must occur.
\end{theorem}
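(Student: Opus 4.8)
The plan is to argue by contradiction: suppose the flow exists smoothly for all $t\in[0,\infty)$, and exhibit a geometric quantity attached to the profile curve that decreases at a fixed negative rate yet must stay positive. Pass to the profile curve: write $\gamma_t\subset\mathbb{C}\setminus\{0\}$ for the profile of $L_t$, with unit tangent $T=e^{i\psi}$, unit normal $\nu=iT$ and polar angle $\phi=\arg\gamma$; the $O(n)$-equivariant flow reduces to the curve flow $\partial_t\gamma=H\nu$ with $H=\kappa-(n-1)\tfrac{\langle\nu,\,\gamma/|\gamma|\rangle}{|\gamma|}$, and the Lagrangian angle is $\theta_\gamma=\psi+(n-1)\phi$. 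By Lemmas~\ref{lem-noncompact} and~\ref{lem-embedded} and the planar asymptotics, $\gamma_t$ is embedded and non-compact with two \emph{fixed} asymptotic rays, at angles $\phi_+$ (as $s\to+\infty$) and $\phi_-$ (as $s\to-\infty$), spanning the angle $\beta=\phi_+-\phi_-$; while the flow is smooth $\gamma_t$ avoids the origin. Set $A(t)=\tfrac12\int_{\gamma_t}(x\,dy-y\,dx)=\tfrac12\int_{\gamma_t}r^2\,d\phi$, which converges by the planar asymptotics and, up to sign, is the area of the region enclosed between $\gamma_t$ and its two static asymptotic rays.

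The heart of the matter is the identity $\tfrac{d}{dt}A(t)=\pi-n\beta$, a constant. Since the asymptotic rays are static, differentiating under the integral and integrating by parts gives $\tfrac{d}{dt}A(t)=-\int_{\gamma_t}H\,ds$, the boundary terms at spatial infinity vanishing because the graphical planar asymptotics force $|H|$ to decay faster than $|\gamma|^{-1}$. In the equivariant reduction the Lagrangian mean-curvature relation $\vec H=J\nabla\theta$ becomes $H=\tfrac{d\theta_\gamma}{ds}$, so $\int_{\gamma_t}H\,ds=\theta_\gamma(+\infty)-\theta_\gamma(-\infty)$. The two limiting planes are $e^{i\phi_+}\mathbb{R}^n$ and $e^{i\phi_-}\mathbb{R}^n$ carrying the orientations induced by the connected neck $L_t$; since $r(s)=|\gamma(s)|$ is increasing at the $+\infty$ end and decreasing at the $-\infty$ end, these orientations are opposite, which pins $\theta_\gamma(+\infty)=n\phi_+$ and $\theta_\gamma(-\infty)=n\phi_-+\pi$. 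Hence $\int_{\gamma_t}H\,ds=n(\phi_+-\phi_-)-\pi=n\beta-\pi$ and $A(t)=A(0)+(\pi-n\beta)\,t$.

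If $\beta>\tfrac{\pi}{n}$ then $A(t)\to-\infty$, so $A$ vanishes at some finite time. On the other hand, a Sturm-type intersection argument against the static profile lines $e^{i\phi_\pm}\mathbb{R}$ (stationary solutions of the profile-curve flow, as profiles of the minimal planes $P_\pm$) confines $\gamma_t$ to the closed sector between its asymptotic rays, and a maximum principle for $\langle\nu,\,\gamma/|\gamma|\rangle$ preserves angular monotonicity of $\gamma_t$ over that sector; then $\gamma_t$ — embedded, avoiding the origin, angularly monotone, asymptotic to the two boundary rays — cobounds with those rays a region of strictly positive area, so $A(t)>0$ as long as the flow is smooth. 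This is the contradiction, so the flow is not immortal: a finite-time singularity must occur, and by the first part of the theorem it occurs at the origin. (As consistency checks, the same identity gives $A$ strictly increasing when $\beta<\tfrac{\pi}{n}$, consistent with the expected convergence to a self-expander, and $A$ constant when $\beta=\tfrac{\pi}{n}$, matching the static Lawlor neck.)

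I expect the main obstacle to be the non-compact bookkeeping behind these manipulations: verifying from the graphical planar asymptotics that $A(t)$ is finite and differentiable with no hidden boundary contributions at spatial infinity; controlling the total winding of $T$ so that $\theta_\gamma(+\infty)-\theta_\gamma(-\infty)$ is exactly $n\beta-\pi$ and not that plus a multiple of $2\pi$ (the almost-calibrated bound $|\theta_\gamma|<\tfrac{\pi}{2}$ together with embeddedness should supply this); and establishing, via the maximum principle above, that $\gamma_t$ stays an embedded radial graph over its sector until the singular time, which is what keeps $A(t)$ positive. These are precisely the a priori estimates the equivariant structure is built to provide, so I would set them up first, after which the contradiction is immediate.
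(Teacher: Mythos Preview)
Your proposal addresses only the final assertion of the theorem (existence of a finite-time singularity when $\beta>\tfrac{\pi}{n}$); the other parts (singularity at the origin, classification and uniqueness of the Type~I blowup) are simply invoked, whereas in the paper they are proven separately as Theorems~\ref{thm-singorigin} and~\ref{thm-typei} via Lemma~\ref{lem-main} and the density/White-regularity machinery. So as a proof of the full statement the proposal is incomplete.

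For the last assertion, your route genuinely differs from the paper's. The paper does \emph{not} run the area argument on the general profile curve: it runs it on Neves' explicit curve $\eta_0(s)=(\sin(\pi s/\beta))^{-\beta/\pi}e^{is}$, for which a Sturmian theorem guarantees the polar parametrisation persists, and uses the truncated quantity $A_{\veps,t}=\tfrac12\int_\veps^{\beta-\veps} r_t^2\,ds$ so that no issue at spatial infinity arises. Finite-time extinction of $\eta_t$ is then transferred to an arbitrary $\gamma_t$ by the avoidance principle (Theorem~\ref{thm-emb}) together with the cone bound of Lemma~\ref{lem-cone}. Your approach instead applies the area-monotonicity identity directly to $\gamma_t$, which is more elegant in principle and yields the clean rate $\dot A=\pi-n\beta$, but it trades the barrier step for two analytic obligations the paper deliberately avoids.

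The first is finiteness of $A(t)=\tfrac12\int_{\gamma_t} r^2\,d\phi$. ``Planar asymptotics'' in the paper's sense does not guarantee this: already for the Lawlor neck in $\mathbb{C}^2$ one has $r^2\sim(\phi_\pm-\phi)^{-1}$ near the asymptotic angles, and $\int r^2\,d\phi$ diverges logarithmically. So for $n=2$ your $A(t)$ is typically infinite, and for $n\ge3$ its finiteness depends on decay that the hypotheses do not supply. A renormalised or truncated version (as the paper uses) is needed, and then the vanishing-at-finite-time conclusion has to be rephrased accordingly. The second is the positivity $A(t)>0$: this rests on your claim that $\gamma_t$ remains an angularly monotone radial graph inside the sector $[\phi_-,\phi_+]$, via a maximum principle for $\langle\nu,\gamma/|\gamma|\rangle$. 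That quantity vanishes at the point of $\gamma_t$ closest to the origin, and its evolution involves the singular coefficient $|\gamma|^{-2}$ from the equivariant term, so a strong maximum principle does not directly preserve a sign; nor does the Sturm comparison with the asymptotic lines apply cleanly, since $\gamma_t$ shares an end with each of them. The paper sidesteps both issues by proving radial graphicality only for the specific barrier $\eta_t$ (citing the Sturmian argument from \cite{neves:1}) and never needing it for the general curve. If you can supply the truncation and the preservation of angular monotonicity rigorously, your argument would give a cleaner, barrier-free proof; as written, these are real gaps.
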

A similar result for equivariant Lagrangian spheres was shown in \cite{viana:1} by C. Viana - his examples are not almost-calibrated.

It is conjectured that any Type II blowup should have the same asymptotes as a Type I blowup, i.e. the `blowdown' of a Type II blowup should be a Type I blowup of the flow. Evidence for this is provided both by A. Savas-Halilaj and K. Smoczyk in \cite{Savas-Halilaj2018}, where it is shown that equivariant Lagrangian spheres develop Type II singularities with a double-density plane as the Type I blowup and the grim reaper as the Type II blowup, and by J.J.L. Vel\'azquez in \cite{velazquez:1}, in which he provides a MCF whose Type I blowup is the Simons' cone and whose Type II blowup is the unique minimal hypersurface tangent to it at infinity. Further analysis of the Vel\'azquez example was undertaken by N. Sesum and S-H. Guo in \cite{sesum:1}, including explicit estimates for the mean curvature and second fundamental form, and an examination of the intermediate scales.

Recently, B. Lambert, J. Lotay and F. Schulze proved in \cite{lls:1} that if the blowdown of a smooth Type II blowup is a pair of transverse planes $P_1\cup P_2$, the blowup must be the Lawlor neck, which is the minimal hypersurface with asymptotes $P_1 \cup P_2$ (unique up to scaling). Therefore if the above conjecture was true we would expect by Theorem \ref{theorem1} that every type II blowup of an almost-calibrated $O(n)$-equivariant flow to be the Lawlor neck. We verify this explicitly.
\begin{theorem}
	Let $L_t$ be an almost-calibrated, connected, $O(n)$-equivariant mean curvature flow in $\mathbb{C}^n$, with planar asymptotics.
	
	Then up to a translation, a Type II blowup of any finite-time singularity is the unique Lawlor neck with the same Lagrangian angle as the unique Type I blowup $P_1 \cup P_2$ and $\max|A|^2 = 1$. In particular, the asymptotes of this Type II blowup are the planes $P_1$ and $P_2$.
\end{theorem}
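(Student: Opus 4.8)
The plan is to produce a Type~II blowup by Hamilton's point-selection procedure and then identify it in three stages: show it is a special Lagrangian, show it is non-flat and $O(n)$-equivariant, and finally pin down its asymptotic cone by invoking the uniqueness statement in Theorem~\ref{theorem1}.

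\emph{Step 1 (extraction).} Let $T < \infty$ be the singular time; by Theorem~\ref{theorem1} the singularity occurs at the origin. Choose $t_k \uparrow T$ and $p_k \in L_{t_k}$ realising (almost) the maximum of $|A|$ over $L_t$, $t \le t_k$, set $\lambda_k := |A|(p_k,t_k) \to \infty$, and pass to a subsequential limit $L^\infty_t$ of the parabolically rescaled flows $\lambda_k(L_{t_k + \lambda_k^{-2}t} - p_k)$. Standard estimates make $L^\infty_t$ a smooth eternal flow with $\sup|A| \le 1$ and $|A|(0,0)=1$, so $L^\infty := L^\infty_0$ is non-flat. Being Type~II, $\lambda_k \sqrt{T-t_k} \to \infty$, and since the singularity is at the origin, $|p_k| \to 0$.

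\emph{Step 2 (special Lagrangian, equivariant, non-flat).} Along the almost-calibrated flow the Lagrangian angle obeys $\partial_t\theta = \Delta\theta$ with $|\theta| < \tfrac{\pi}{2}$, and these properties pass to the eternal limit; by the now-standard argument for singularities of almost-calibrated Lagrangian mean curvature flow (in the spirit of Neves's work and of Lemma~\ref{thm-s1lag} in the equivariant reduction), the oscillation of $\theta$ on an eternal such flow must vanish, so $L^\infty$ has constant Lagrangian angle, i.e.\ is a special Lagrangian. For the symmetry, $|A|^2$ is constant on $O(n)$-orbits, so $p_k$ lies on an orbit sphere of radius $|p_k|$; passing to a further subsequence, $\lambda_k|p_k| \to c \in [0,\infty]$. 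If $c<\infty$ the rescaled flows are $O(n)$-equivariant with symmetry axes converging (after a rotation) to a fixed axis at distance $c$ from the origin of $L^\infty$, so $L^\infty$ is $O(n)$-equivariant. If $c=\infty$ the orbit spheres flatten and $L^\infty$ splits isometrically as $\mathbb{R}^{n-1}\times C$ for a curve $C\subset\mathbb{C}$, forcing $C$ to be a line and $L^\infty$ to be a plane, contradicting $|A|(0,0)=1$. Combined with Lemma~\ref{lem-embedded} (embeddedness, with no hidden multiplicity since $\max|A|^2=1$), $L^\infty$ is a non-flat, embedded, connected, $O(n)$-equivariant special Lagrangian.

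\emph{Step 3 (identification of the asymptotes).} By the classification of $O(n)$-equivariant special Lagrangians in $\mathbb{C}^n$ --- equivalently by the theorem of Lambert--Lotay--Schulze \cite{lls:1} once the blowdown is known to be planar --- $L^\infty$ is a Lawlor neck, asymptotic to a transverse pair of special Lagrangian planes $Q_1 \cup Q_2$ with profile lines meeting at angle $\tfrac{\pi}{n}$, and unique up to scaling. It remains to show $\{Q_1,Q_2\} = \{P_1,P_2\}$. The idea is that a blowdown of $L^\infty$ is again a tangent flow of the original flow at $(0,T)$: since $L^\infty = \lim_k \lambda_k(L_{t_k}-p_k)$ with $p_k\to 0$, composing with rescalings $\delta_\ell \to 0$ that exhibit $Q_1 \cup Q_2$ as the (unique) asymptotic cone of the Lawlor neck and taking a diagonal subsequence displays $Q_1 \cup Q_2$ as $\lim_\ell \mu_\ell(L_{t_\ell}-p_\ell)$, where one can arrange $\mu_\ell \to \infty$, $\mu_\ell\sqrt{T-t_\ell}\to 0$ and $\mu_\ell|p_\ell| \to 0$ (possible since $\lambda_\ell\sqrt{T-t_\ell}\to\infty$ while $\lambda_\ell|p_\ell|$ stays bounded), that is, as a tangent flow of $L_t$ at $(0,T)$. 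But Theorem~\ref{theorem1} asserts that this tangent flow is unique and equal to the static flow of $P_1 \cup P_2$. Hence $Q_1 \cup Q_2 = P_1 \cup P_2$, so $L^\infty$ is the Lawlor neck asymptotic to $P_1 \cup P_2$; the scale is then fixed by the normalisation $\max|A|^2 = 1$, its constant Lagrangian angle necessarily equals the common Lagrangian angle of $P_1$ and $P_2$, and the freedom in where $p_k$ lands along the neck produces the translation in the statement.

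I expect Step~3 to be the crux: choosing the intermediate scales $\mu_\ell$ so that a blowdown of the Type~II model genuinely is a tangent flow of the original flow --- so that the uniqueness from Theorem~\ref{theorem1} can be invoked --- and controlling the non-compact asymptotic regions throughout. A secondary point needing care is the passage from the almost-calibrated condition to constancy of the Lagrangian angle on the eternal limit in this non-compact, equivariant setting.
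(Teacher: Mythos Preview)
Your outline has the right skeleton --- in particular, your Step~2 case split on whether $\lambda_k|p_k|$ stays bounded or diverges (forcing a translation-invariant, hence planar, limit) is exactly what the paper does --- but the two points you flag as ``the crux'' and ``needing care'' are genuine gaps, and the paper resolves both of them with a single argument that you are missing.

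The paper does \emph{not} first prove that the eternal limit has some constant Lagrangian angle and then separately identify that constant via a blowdown/diagonal argument. Instead, it proves directly (Theorem~\ref{thm-typeIIangle}) that along the Type~II rescalings the Lagrangian angle converges pointwise to $\overline{\theta}$, the angle of the unique Type~I blowup. The mechanism is Huisken monotonicity applied to the weighted integral $\int_{L_t}|\theta-\overline\theta|^2\,\Phi_{(O,0)}$, which is nonincreasing and, by the Type~I convergence and scale invariance (Lemma~\ref{lem-huiskenscaleinvariance}), tends to zero; transplanting this to the Type~II scale via two applications of Lemma~\ref{lem-huiskenscaleinvariance} and one application of monotonicity forces $\theta\to\overline\theta$ uniformly on compact spacetime sets. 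This single step gives you both that the limit is special Lagrangian \emph{and} that its angle is $\overline\theta$; uniqueness then follows immediately from Lemma~\ref{lem-cone}, since there is only one Lawlor neck with angle $\overline\theta$ and $\sup|A|=1$ whose profile curve fits inside the cone of opening $<\tfrac{2\pi}{n}$ that contains all the $\gamma_t$.

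Your Step~3 as written does not work. The Type~II convergence $\lambda_k L_{t_k}\to L^\infty$ is $C^\infty_{\mathrm{loc}}$ on \emph{fixed} compact sets, whereas recovering the blowdown $\delta_\ell L^\infty$ on a fixed ball $B_R$ requires control of $\lambda_k L_{t_k}$ on the \emph{growing} balls $B_{R/\delta_\ell}$; a naive diagonal extraction gives you nothing there. Moreover, your scaling condition $\mu_\ell\sqrt{T-t_\ell}\to 0$ places $\mu_\ell L_{t_\ell}$ at rescaled time $s_\ell=-\mu_\ell^2(T-t_\ell)\to 0$ in the Type~I picture, which is precisely the time at which the Type~I uniqueness statement is not asserted; to invoke Theorem~\ref{theorem1} directly you would need $\mu_\ell\sqrt{T-t_\ell}$ bounded away from $0$ and $\infty$, but that choice of $\delta_\ell$ cannot simultaneously exhibit the blowdown of $L^\infty$. (Indeed, the paper's intermediate-scale theorem, Theorem~\ref{thm-intermediate}, which is what you are implicitly reaching for, is proved \emph{after} and \emph{using} Theorem~\ref{thm-typeIIangle}.) Your Step~2 Liouville claim is also not free: a bounded solution of the heat equation on an eternal flow with bounded geometry need not be constant without further input, and you do not supply any.
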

We also check the `intermediate scales', to confirm that there is no different behaviour in between the Type I and Type II scales - this is the content of Section \ref{sec-inter}. We prove that, using the same sequence of times as a Type II rescaling, if we use blowup factors smaller than the second fundamental form then we still obtain the blowup $P_1 \cup P_2$.
\begin{theorem}
	Let $L_t$ be an almost-calibrated, connected, $O(n)$-equivariant mean curvature flow in $\mathbb{C}^n$, with planar asymptotics. Assume that $L_t$ forms a singularity at the origin at time $t=0$, and let \[ L_\tau^{t_k,\lambda_k}\, := \, \lambda_k L_{t_k + T + \lambda_k^{-2}\tau} \] be a sequence of rescalings satisfying
	
	\[\quad \delta_k := \frac{\lambda_k}{A_k} \rightarrow 0, \quad -\lambda_k^2 t_k \rightarrow \infty,\]
	
	where $A_k := \max_{L_{t_k}}(|A|)$, and $0 > t_k \rightarrow 0$ satisfies (\ref{eq-typeiicondition}) for $p_k \equiv 0$. Then for any $R, \veps$ and finite time interval $I$, there exists a subsequence such that $L_\tau^{t_k,\lambda_k} \cap (B_R{\setminus} B_\veps)$ may be expressed as a graph over $P_1 \cup P_2$ for $\tau \in I$, and this graph converges in $C^{1;0}$ to 0.
\end{theorem}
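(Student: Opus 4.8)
The plan is to bootstrap from the two structural theorems already established. At the Type I scale the flow converges to $P_1 \cup P_2$, and at the Type II scale it converges to the Lawlor neck $N$, whose blowdown is again $P_1 \cup P_2$. The intermediate rescalings sit between these: since $\delta_k = \lambda_k/A_k \to 0$, the blowup factor $\lambda_k$ is much smaller than the Type II factor $A_k$, so morally we are ``zooming out'' from the Lawlor neck, and since $-\lambda_k^2 t_k \to \infty$ we are still ``zoomed in'' relative to the Type I scale (which would correspond to $-\lambda_k^2 t_k$ bounded). First I would set up the rescaled flows $L_\tau^{t_k,\lambda_k}$ and record the uniform estimates they inherit: an area ratio bound (from monotonicity and the planar asymptotics), the almost-calibrated condition (scale-invariant), and $O(n)$-equivariance. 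This reduces everything to the profile curve $\gamma_\tau^k$ in the half-plane, a curve shortening-type flow with the symmetry-induced reaction term.

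The core of the argument is a compactness-and-classification step on the annulus $B_R \setminus B_\veps$. On this region the curvature of $L_\tau^{t_k,\lambda_k}$ is controlled: near the Type II concentration point the second fundamental form of the un-rescaled flow is comparable to $A_k$, so after rescaling by $\lambda_k \ll A_k$ the curvature there is $O(\delta_k) \to 0$; away from that point it is even smaller. Hence on $B_R\setminus B_\veps$ we get uniform $C^2$ (indeed, by Ecker–Huisken-type interior estimates, $C^\infty_{loc}$) bounds, and Brakke/Langer compactness gives subsequential convergence to a limiting flow $L_\tau^\infty$ on the annulus. This limit is a smooth equivariant LMCF with vanishing second fundamental form on $B_R \setminus B_\veps$ for each $\tau$ — so on the annulus it is a union of planes through the origin. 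It is almost-calibrated and, by matching Lagrangian angles and asymptotic directions with the Type I blowup $P_1 \cup P_2$ (which is forced by the Type I theorem, since the blowdown of any Type II blowup is the unique $P_1 \cup P_2$ and the intermediate scale interpolates), $L_\tau^\infty \cap (B_R \setminus B_\veps) = (P_1 \cup P_2) \cap (B_R \setminus B_\veps)$. Static-ness in $\tau$ follows because a union of planes is minimal. The $C^{1;0}$ graphical convergence then just unwinds the smooth convergence.

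The main obstacle I anticipate is establishing the uniform curvature bound on $B_R \setminus B_\veps$ for \emph{all} $\tau$ in a fixed interval $I$, not merely at $\tau = 0$. The hypothesis $-\lambda_k^2 t_k \to \infty$ together with condition (\ref{eq-typeiicondition}) pins down the behaviour at the base time, but one needs to propagate the estimate forward and backward in $\tau$; this requires a pseudolocality or clearing-out argument showing that no curvature can concentrate in the annulus over the time interval $I$ — equivalently, that the Type II singular point, after rescaling, stays near the origin (distance $O(\delta_k) \to 0$) and does not drift into $B_R \setminus B_\veps$. I would prove this via the equivariant structure: the profile curve is a graph over each asymptotic ray outside a small ball (from the planar asymptotics, scale-invariantly), and one tracks the ``neck'' region where this graphical description breaks down, showing its size in the rescaled picture is $O(\delta_k)$. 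A secondary subtlety is ruling out that the limit on the annulus is a \emph{higher-multiplicity} plane or a different configuration of planes; this is where almost-calibratedness (forcing embeddedness, Lemma \ref{lem-embedded}) and the identification of the asymptotic cone with the Type I blowup are essential, and one invokes the uniqueness clause of Theorem \ref{theorem1} — the Type I blowup is independent of the rescaling sequence — to conclude the limit is exactly $P_1 \cup P_2$ and not merely \emph{some} transverse pair.
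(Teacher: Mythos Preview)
Your approach has a genuine gap at its core step. The claim that the curvature of $L_\tau^{t_k,\lambda_k}$ on the annulus is $O(\delta_k)\to 0$ has the scaling backwards: if $|A|\sim A_k$ on the un-rescaled flow near the concentration point, then after dilating by $\lambda_k$ the curvature becomes $A_k/\lambda_k = 1/\delta_k \to \infty$, not $\delta_k$. What is true is that this blowup point sits at distance $O(\delta_k)$ from the origin in the rescaled picture, hence eventually inside $B_\veps$; but you still have no a priori bound on the annulus itself. Smooth convergence of the Type II rescaling to the Lawlor neck only controls curvature on \emph{compact} sets of the Type II picture, whereas your annulus maps to the region $|y|\in[\veps/\delta_k,R/\delta_k]\to\infty$ there, so you would need uniform curvature decay at infinity for a sequence of flows, which is not free. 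One could try to close this via White regularity, but then you must produce a density bound $\Theta\le 1+\veps$ at the right scale near points whose distance to the singular origin is collapsing --- this is real work you have not addressed, and your proposed ``track the neck region'' sketch does not supply it.

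The paper avoids curvature bounds entirely and instead exploits the Lagrangian angle. The key input you are missing is Theorem~\ref{thm-typeIIangle}: for any rescaling sequence with $-\lambda_k^2 t_k\to\infty$, the Lagrangian angle $\theta^k_\tau$ converges \emph{uniformly} to $\overline\theta$ on $B_R\times I$. Once $|\theta-\overline\theta|<\veps$, the equivariant formula $\theta=(n-1)\arg(\gamma)+\arg(\dot\gamma)$ becomes an ODE constraint that forces the profile curve to be $C^1$-close to a Lawlor neck determined by a single parameter, namely its intersection $b^k_\tau$ with the positive real axis (Lemma~\ref{lem-interconv}). The problem then reduces to showing $b^k_\tau\to 0$ uniformly in $\tau\in I$. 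Since $b^k_0\to 0$ by the Type II convergence, it suffices to show $b^k_\tau$ cannot drift; this is done by integrating $\vec H=J\nabla\theta$ over the profile curve and over time, so that the total displacement is controlled by the oscillation of $\theta$, which is $\leq 2\veps|I|$. No curvature estimate, pseudolocality, or Brakke compactness enters.
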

The proofs of these theorems are contained in Section \ref{sec-s1mcf}. Section \ref{sec-prelim} is devoted to preliminary material about Lagrangians, mean curvature flow and blowups of singularities, and Section \ref{sec-s1} contains material on $O(n)$-equivariant submanifolds, including descriptions of the Lawlor neck and convergence theorems for sequences of equivariant submanifolds.

\subsection*{Acknowledgements} I am grateful to my PhD supervisor Felix Schulze for his support and guidance. Among the many others who have worked with or encouraged me, particular thanks are due to Chris Evans, Ben Lambert, Jason Lotay and Thomas Koerber, whose input has been invaluable. This work was supported by the Engineering and Physical Sciences Research Council [EP/L015234/1], The EPSRC Centre for Doctoral Training in Geometry and Number Theory (The London School of Geometry and Number Theory), University College London.

\begin{figure}[t]
	\centering
	\begin{subfigure}[b]{0.42\linewidth}
		\includegraphics[width=\linewidth]{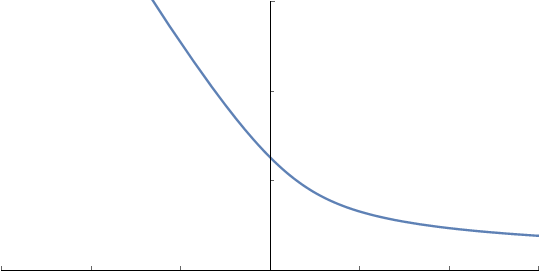}
		\caption{Initial Condition}
	\end{subfigure}
	\begin{subfigure}[b]{0.06\linewidth}
		$\,$
	\end{subfigure}
	\begin{subfigure}[b]{0.42\linewidth}
		\includegraphics[width=\linewidth]{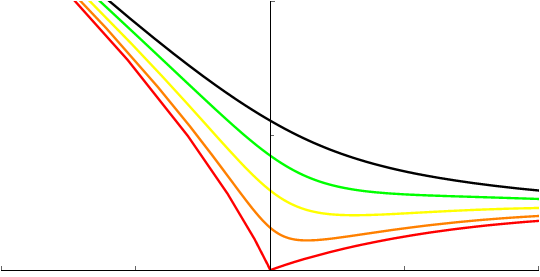}
		\caption{MCF}
	\end{subfigure}
	\caption{The profile curve of Neves' equivariant mean curvature flow in $\mathbb{C}^2$ spanning an angle $\beta = \tfrac{2\pi}{3}$, which forms a singularity at the origin.}
	\label{fig-nevesBetaLarge}
\end{figure}

\begin{figure}[t]
	\centering
	\begin{subfigure}[b]{0.42\linewidth}
		\includegraphics[width=\linewidth]{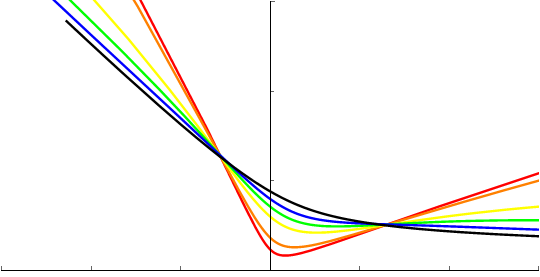}
		\caption{Type I Rescalings, at a fixed time}
	\end{subfigure}
	\begin{subfigure}[b]{0.06\linewidth}
		$\,$
	\end{subfigure}
	\begin{subfigure}[b]{0.42\linewidth}
		\includegraphics[width=\linewidth]{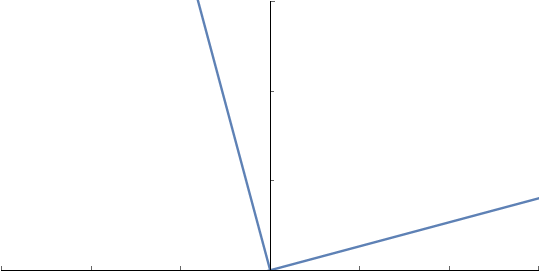}
		\caption{The Type I Blowup}
	\end{subfigure}
	\caption{Convergence of the profile curves of the Type I rescalings of Neves' equivariant mean curvature flow in $\mathbb{C}^2$ spanning an angle of $\beta = \tfrac{2\pi}{3}$.}
	\label{fig-nevesTypeIBlowup}
\end{figure}

\section{Preliminaries}\label{sec-prelim}

\subsection{Singularities of MCF}

A family of Riemannian immersions $F_t: M^m \hookrightarrow N^n$ where $t \in [t_0,T)$ is a \textbf{mean curvature flow} (often shortened to MCF) if 
\[ \frac{\partial F}{\partial t} \, = \, \vec{H}, \]
where $\vec{H}$ is the vector-valued mean curvature, the trace of the vector-valued second fundamental form:
\begin{align*}
	A_{ij} &:=  \overline\nabla^\perp_{\frac{\partial F}{\partial x^i}}\tfrac{\partial F}{\partial x^j},\\
	\vec{H} \, &:= \, H^{\alpha}\nu_{\alpha} \, = \, g^{ij}A_{ij}^{\alpha}\nu_{\alpha}.
\end{align*}
Our primary interest is in studying finite-time singularities of MCF. The following theorem, proven in the hypersurface case by Huisken (\cite{huisken:1}), helps us to understand what happens at these singular times.
\begin{theorem}
	Let $F_t:M^{m} \hookrightarrow N^n$ be a mean curvature flow of a closed submanifold, with corresponding second fundamental form $A_t$. If $T$ denotes the final time of existence, then
	\[ \limsup_{t \rightarrow T} \max_{p \in M_{t}}\left|A_t(p)\right|^2 \rightarrow \infty.\]
\end{theorem}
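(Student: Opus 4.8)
The plan is to argue by contradiction. If the conclusion fails then $\limsup_{t\to T}\max_{p\in M_t}|A_t(p)|^2$ is finite, so there is a constant $C_0$ with $|A_t|^2\le C_0$ on $M\times[t_0,T)$, and I will show this forces the flow to extend smoothly past $T$, contradicting the maximality of $T$. The first, soft, observations are the following. Since under the flow $\partial_t g_{ij}=-2\langle\vec H,A_{ij}\rangle$ and $|\vec H|\le c(m)|A|$, the bound $|A|^2\le C_0$ gives $|\partial_t g_{ij}|_{g(t)}\le c(m)C_0$, so by integrating in $t$ the metrics $g(t)$, $t\in[t_0,T)$, are all uniformly equivalent to $g(t_0)$; in particular $M$ with the limiting metric is again a closed Riemannian manifold, and lengths, distances and volumes stay comparable along the flow. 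Moreover $|\partial_t F|=|\vec H|\le c(m)\sqrt{C_0}$ is bounded, so the maps $F_t$ are uniformly Lipschitz in $t$ and hence converge uniformly as $t\to T$ to a Lipschitz map $F_T:M\to N$.

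The main step is to obtain uniform bounds on all covariant derivatives of the second fundamental form. The flow satisfies evolution equations of the schematic form $\partial_t\nabla^kA=\Delta\nabla^kA+\sum_{i+j+l=k}\nabla^iA*\nabla^jA*\nabla^lA$, where $*$ denotes a metric contraction; feeding in $|A|\le\sqrt{C_0}$ and the uniform equivalence of the $g(t)$, an induction on $k$ using the parabolic maximum principle together with a cutoff in time (to absorb the lowest-order terms) produces, for each $k$, a constant $C_k$ depending only on $C_0$, $k$, $m$, the ambient geometry and the initial data, such that $\sup_{M\times[t_0,T)}|\nabla^kA|\le C_k$. This Bernstein/Shi-type estimate is where the parabolicity of mean curvature flow is genuinely used, and I expect it to be the main technical obstacle; everything on either side of it is comparatively routine.

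With all $|\nabla^kA|$ bounded and the metrics uniformly equivalent, the $C^0$ convergence $F_t\to F_T$ upgrades to $C^\infty$ convergence: each iterated spatial derivative $\partial_{i_1}\cdots\partial_{i_r}F_t$ has uniformly bounded $t$-derivative, namely the corresponding derivative of $\vec H$ written out in terms of the $\nabla^jA$, the Christoffel symbols of $g(t)$ and the ambient curvature, all of which are controlled, so $F_t$ is Cauchy in every $C^r$ norm. Thus $F_T$ is smooth, and it is an immersion because $g(T)=F_T^*\bar g$ is the smooth limit of the uniformly non-degenerate metrics $g(t)$; passing to the limit in $\partial_tF=\vec H$ shows that $(F_t)_{t\in[t_0,T]}$ is a smooth mean curvature flow on the closed interval. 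Finally, short-time existence for mean curvature flow applied to the smooth closed immersion $F_T$ yields a flow on $[T,T+\veps)$; since all $t$-derivatives at $t=T$ are forced by the equation, gluing this to $(F_t)_{t\le T}$ gives a mean curvature flow on $[t_0,T+\veps)$ extending the original one. This contradicts the definition of $T$ as the final time of existence, so $\limsup_{t\to T}\max_{p\in M_t}|A_t(p)|^2=\infty$.
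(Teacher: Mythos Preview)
The paper does not actually prove this theorem; it is stated as a known result attributed to Huisken \cite{huisken:1} in the hypersurface case, and is quoted as background material without proof. Your argument is correct and is precisely the standard contradiction argument used to establish this result: bounded $|A|$ forces uniform equivalence of the evolving metrics, Shi-type estimates give bounds on all $|\nabla^k A|$, hence smooth convergence of $F_t$ to a limiting immersion $F_T$, and short-time existence then extends the flow past $T$. There is nothing to compare against in the paper itself, but your proposal matches the classical proof.
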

In the hypersurface case, this rate has a lower bound:
\[ \max_{p \in M}|A_t(p)|^2 \, \geq \, \frac{1}{2(T-t)}, \]
so the `best case' scenario is therefore one in which  $\max(|A|^2)$ has a rate of increase of $\frac{C}{2(T-t)}$ for some constant $C$. This inspires the following definition:
\begin{definition}
	A mean curvature flow $F_t:M \rightarrow N$ has a \textbf{Type I singularity} at time $T$ if, for a constant $C$,
	\[ \max_{p \in M}|A_t(p)|^2 \, \leq \, \frac{C}{T-t}.\]
	Otherwise, it is a \textbf{Type II singularity}.
\end{definition}
Take a flow $F_t:M \rightarrow \mathbb{R}^n$ with a Type I singularity at the spacetime point $(x,T)$, and consider the parabolic rescaling around this point,
\[ F^{\lambda}_s \, := \, \lambda \left ( F_{T + \lambda^{-2}s} - x \right ),  \]
which can be shown to be a mean curvature flow with time coordinate $s$. Taking a sequence $\lambda_i \rightarrow \infty$, we can use the bound on $|A|$ to show that the flows $F^{\lambda_i}_s$ converge subsequentially and locally smoothly to an ancient and self-similarly shrinking mean curvature flow $F^{\infty}_s$, which we call a \textbf{Type I blowup} of $F_t$. Note that the blowup may depend on the sequence chosen.

There are two very important tools that will help us understand the Type I blowup, namely the monotonicity formula of Huisken \cite{huisken:1} and the regularity theorem of White \cite{white:1}. In what follows, we will need the following modified backwards heat kernel:
\[ \Phi_{(x_0,t_0)}(x,t) \, := \, \left( 4\pi (t_0 - t)\right)^{-\frac{m}{2}} e^{-\frac{|x-x_0|^2}{4(t_0-t)}} .\]
\begin{theorem}[Huisken's Monotonicity Formula] \label{thm-monotonicity}
	Let $F_t: M^m \hookrightarrow \mathbb{R}^n$ be a smooth solution of MCF, where $F_t(M)$ has bounded area ratios. Then:
	\[ \frac{d}{dt} \int_{F_t} \Phi_{(x_0,t_0)} \,\, d\mathcal{H}^m \, = \, -\int_{F_t} \left| \vec{H} + \frac{(x-x_0)^{\perp}}{2(t_0-t)}\right|^2 \Phi_{(x_0,t_0)} \,\, d\mathcal{H}^m.   \]
	More generally, if $f_t$ is any smooth function with polynomial growth at infinity, then
	\[ \frac{d}{dt} \int_{F_t} f_t \, \Phi_{(x_0,t_0)} \,\, d\mathcal{H}^m \, = \, \int_{F_t} \left( \frac{df_t}{dt} - \Delta f_t - f_t\left| \vec{H} + \frac{(x-x_0)^{\perp}}{2(t_0-t)}\right|^2 \right) \Phi_{(x_0,t_0)} \,\, d\mathcal{H}^m.   \]
\end{theorem}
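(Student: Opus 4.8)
The plan is the standard direct computation due to Huisken. For notational convenience, apply the space-time translation $(x,t)\mapsto(x-x_0,t-t_0)$, which carries mean curvature flows to mean curvature flows, so that we may take $(x_0,t_0)=(0,0)$; then $\Phi(x,t)=(-4\pi t)^{-m/2}e^{|x|^2/(4t)}$ for $t<0$. Write $\rho_t$ for the pullback of $\Phi$ to $M$ along $F_t$, and $\tfrac{d}{dt}$ for the derivative along the flow, so $\tfrac{d}{dt}\rho_t=\partial_t\Phi+\langle D\Phi,\vec H\rangle$ with $D$ the ambient gradient on $\mathbb{R}^n$. The two standard ingredients are the first variation of the area element along MCF, namely $\tfrac{d}{dt}\,d\mathcal{H}^m=-|\vec H|^2\,d\mathcal{H}^m$, and the identity $\Delta_M g=\operatorname{tr}_M(D^2g)+\langle Dg,\vec H\rangle$ for the restriction to $M$ of an ambient function $g$, the trace being over an orthonormal tangent frame.

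The key step is the pointwise identity
\[ \frac{d}{dt}\rho_t+\Delta_M\rho_t \;=\; \rho_t|\vec H|^2-\rho_t\left|\vec H+\frac{x^\perp}{2(-t)}\right|^2, \]
where $x^\perp$ denotes the normal component of the position vector. This is a direct computation: writing $\Phi=(-4\pi t)^{-m/2}e^{-w}$ with $w=\tfrac{|x|^2}{4(-t)}$, I would evaluate $\partial_t\Phi+\langle D\Phi,\vec H\rangle$ and $\Delta_M\rho_t=\operatorname{tr}_M(D^2\Phi)+\langle D\Phi,\vec H\rangle$ in terms of $w$, $\vec H$ and the tangential and normal parts $x^T,x^\perp$ of the position vector; adding them, the $\tfrac{m}{2(-t)}$ terms cancel, and after using $|x|^2=|x^T|^2+|x^\perp|^2$ the remaining terms---including the cross terms in $\langle x^\perp,\vec H\rangle$---collect precisely into the right-hand side above. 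Integrating this identity over $M_t$ and inserting the first variation formula then gives
\[ \frac{d}{dt}\int_{F_t}\Phi\,d\mathcal{H}^m \;=\; \int_{F_t}\!\Big(\tfrac{d}{dt}\rho_t-\rho_t|\vec H|^2\Big)\,d\mathcal{H}^m \;=\; -\int_{F_t}\!\Delta_M\rho_t\,d\mathcal{H}^m-\int_{F_t}\!\Big|\vec H+\tfrac{x^\perp}{2(-t)}\Big|^2\Phi\,d\mathcal{H}^m , \]
and since $\Delta_M\rho_t=\operatorname{div}_M(\nabla_M\rho_t)$, the first term on the right vanishes by the divergence theorem; this is the unweighted formula. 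For the general statement I would carry $f_t$ through the same computation and then move the Laplacian from $\rho_t$ onto $f_t$ by Green's second identity $\int_{F_t}f_t\,\Delta_M\rho_t=\int_{F_t}\rho_t\,\Delta_M f_t$, which produces the stated integrand $\tfrac{df_t}{dt}-\Delta f_t-f_t\bigl|\vec H+\tfrac{x^\perp}{2(-t)}\bigr|^2$.

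The one step demanding genuine care---and the main obstacle here, since $F_t(M)$ need not be closed---is justifying the two integrations by parts (the divergence theorem applied to $\int_{F_t}\Delta_M\rho_t$, and Green's identity in the weighted case). This is exactly where the hypothesis of bounded area ratios enters: inserting a cutoff $\eta_R$ that equals $1$ on $B_R$ and is supported in $B_{2R}$, one carries out the integrations by parts on $\operatorname{supp}\eta_R$ and lets $R\to\infty$; the resulting boundary error is dominated by $\int_{(B_{2R}\setminus B_R)\cap F_t}\bigl(|\nabla\eta_R|+|\Delta\eta_R|\bigr)\bigl(1+|f_t|\bigr)\Phi\,d\mathcal{H}^m$, and since $\Phi$ decays like a Gaussian in $|x|$ while $\mathcal{H}^m(F_t\cap B_{2R})$ grows at most polynomially (by bounded area ratios) and $f_t$ grows at most polynomially, this vanishes as $R\to\infty$. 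The same estimate shows a priori that every integral in the statement is finite and that differentiation under the integral sign is legitimate, which completes the argument.
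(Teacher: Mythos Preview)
The paper does not give its own proof of this theorem: it is stated as a preliminary result and attributed to Huisken \cite{huisken:1}. Your argument is the standard and correct one --- the pointwise identity $\tfrac{d}{dt}\rho_t+\Delta_M\rho_t=\rho_t|\vec H|^2-\rho_t\bigl|\vec H+\tfrac{x^\perp}{2(-t)}\bigr|^2$ is exactly right, the passage to the weighted version via Green's identity is the usual route, and you correctly identify that bounded area ratios together with the Gaussian decay of $\Phi$ justify the cutoff argument needed for the integrations by parts on a non-compact $F_t(M)$.
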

The integral quantity in the monotonicity formula satisfies a very useful scale invariance:
\begin{lemma}\label{lem-huiskenscaleinvariance}
	If $F_s^\lambda(N) = N^{\lambda}_s$ is a parabolic rescaling of the flow $F_t(N) = N_t$ around the point $(x_0,t_0)$ with $s = \lambda^2(t - t_0)$, then for $X = (\overline{x}, \overline{t})$, and any function $f$ on $N$,
	\[ \int_{N_t} f \, \Phi_X \, d\mathcal{H}^n \, = \, \int_{N_s^\lambda} f \, \Phi_{\left( \lambda(\overline{x}- x_0), \lambda^2(\overline{t}-t_0)\right)} \, d\mathcal{H}^n  .\] 
\end{lemma}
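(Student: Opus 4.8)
The plan is a direct change-of-variables computation that tracks how each factor of the integrand rescales. First I would fix $t$, set $s = \lambda^2(t - t_0)$, and record the relevant diffeomorphism: by definition of the parabolic rescaling $F_s^\lambda = \lambda\bigl(F_{t_0 + \lambda^{-2}s} - x_0\bigr)$, the ambient dilation $\Psi_\lambda : x \mapsto \lambda(x - x_0)$ restricts to a diffeomorphism $\iota_\lambda : N_t \to N^\lambda_s$. Since $D\Psi_\lambda = \lambda\,\mathrm{Id}$, the metric induced on $N^\lambda_s$ pulls back under $\iota_\lambda$ to $\lambda^2$ times the metric on $N_t$, so the induced Hausdorff measures satisfy $\iota_\lambda^*\bigl(d\mathcal{H}^n|_{N^\lambda_s}\bigr) = \lambda^n\, d\mathcal{H}^n|_{N_t}$ (here $n = \dim N_t$, so $\Phi$ carries the weight $(4\pi(\cdot))^{-n/2}$).

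Next I would compute the pullback of the backwards heat kernel under $\iota_\lambda$. Writing $\overline{x}_\lambda := \lambda(\overline{x} - x_0)$ and $\overline{t}_\lambda := \lambda^2(\overline{t} - t_0)$ for the rescaled base point, and using $x' := \iota_\lambda(x) = \lambda(x - x_0)$ together with the time relation $s = \lambda^2(t - t_0)$, the two ingredients of $\Phi_{(\overline{x}_\lambda, \overline{t}_\lambda)}$ obey
\[ \overline{t}_\lambda - s \;=\; \lambda^2(\overline{t} - t), \qquad x' - \overline{x}_\lambda \;=\; \lambda(x - \overline{x}). \]
Hence the Gaussian exponent $-\frac{|x' - \overline{x}_\lambda|^2}{4(\overline{t}_\lambda - s)} = -\frac{|x - \overline{x}|^2}{4(\overline{t} - t)}$ is scale-invariant, while the normalizing prefactor contributes exactly one power of $\lambda^{-n}$, since $\bigl(4\pi\,\lambda^2(\overline{t} - t)\bigr)^{-n/2} = \lambda^{-n}\bigl(4\pi(\overline{t} - t)\bigr)^{-n/2}$; that is, $\iota_\lambda^*\Phi_{(\overline{x}_\lambda, \overline{t}_\lambda)} = \lambda^{-n}\,\Phi_{(\overline{x}, \overline{t})}$.

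Finally, since $f$ is a function on the underlying manifold $N$ it takes equal values at corresponding points of $N_t$ and $N^\lambda_s$, so $\iota_\lambda^* f = f$; applying the change of variables $\iota_\lambda$ to the integral over $N^\lambda_s$ and combining the two scalings, the $\lambda^{-n}$ from the kernel cancels the $\lambda^n$ from the measure and the stated identity follows. I do not expect any substantive obstacle here: the only point requiring care is the bookkeeping — in particular confirming that the time shift produces $\overline{t}_\lambda - s = \lambda^2(\overline{t} - t)$ rather than $\lambda^2(\overline{t} - t_0)$ in the denominator of the kernel, and that the measure and kernel scalings cancel on the nose, which is exactly what the parabolic (as opposed to merely spatial) normalization of $\Phi$ is designed to ensure.
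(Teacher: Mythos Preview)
Your proposal is correct and follows essentially the same route as the paper: a direct change-of-variables under the ambient dilation $x\mapsto\lambda(x-x_0)$, tracking that the Gaussian exponent is scale-invariant while the normalizing prefactor $\bigl(4\pi(\overline{t}-t)\bigr)^{-n/2}$ absorbs exactly the $\lambda^{n}$ Jacobian from the Hausdorff measure. The paper simply writes this out as a chain of equalities on the integral rather than phrasing it via pullbacks, but the content is identical.
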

\begin{proof}
	\begin{align*}
		\int_{N_t} f \,\Phi_{X} \, d\mathcal{H}^n \, &= \, \int_{N_t} f \,(4\pi (\overline{t}-t))^{-\tfrac{n}{2}} \cdot e^{-\frac{|\overline{x}-x|^2}{2(\overline{t}-t)} } \, d\mathcal{H}^n(x)\\
		&= \, \int_{\lambda(N_t - x_0)} f \,\lambda^{-n}(4\pi (\overline{t}-t))^{-\tfrac{n}{2}} \cdot e^{-\frac{|\overline{x}-(\lambda^{-1}x+x_0)|^2}{2(\overline{t}-t)} } \, d\mathcal{H}^n(x)\\
		&= \, \int_{\lambda(N_{\lambda^{-2}s + t_0} - x_0)} f \,(4\pi \left( \lambda^2(\overline{t}-t_0) - s \right))^{-\tfrac{n}{2}} \cdot e^{-\frac{|\lambda(\overline{x}-x_0) - x|^2}{2(\lambda^2(\overline{t}-t_0) - s)} } \, d\mathcal{H}^n(x)\\
		&= \, \int_{N_s^\lambda} f \,\Phi_{\left( \lambda(\overline{x}- x_0), \lambda^2(\overline{t}-t_0)\right)} \, d\mathcal{H}^n .
	\end{align*}
\end{proof}
Huisken's monotonicity formula inspires the following quantity, known as the \textbf{Gaussian density ratio}, for a spacetime point $X = (x_0,t_0)$:
\[ \Theta(F, X, r)  \, := \, \int_{F_{t_0-r^2}} \Phi_{X}(x,t_0-r^2) \,\, d\mathcal{H}^m. \]
The monotonicity formula implies that this quantity is increasing in $r$. The Gaussian density ratio is useful for controlling the curvature of our flow - this is the content of White's regularity theorem. Denote by $P_r(x,t)$ the \textbf{parabolic cylinder} $B_r(x) \times (t - r^2, t]$.
\begin{theorem}[White's Regularity Theorem] \label{thm-white}
	There exist $\veps >0$, $C >0$ depending on $n$ such that if $F_t: M^{m} \rightarrow \mathbb{R}^n$ is a smooth mean curvature flow, and if
	\[\forall X \in P_r(X_0), \quad \Theta(M,X,r) \, \leq 1 + \veps, \]
	then
	\[ \sup_{X \in P(X_0,\frac{r}{2})} |A(X)| \, \leq \frac{C}{r}. \]
\end{theorem}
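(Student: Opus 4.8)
The plan is to prove White's theorem by contradiction, via a parabolic point‑picking (blowup) argument that reduces the statement to the rigidity fact that a smooth eternal mean curvature flow whose Gaussian density ratios are identically $1$ must be a static plane. First I would negate the conclusion: if no uniform $\veps, C$ work, then for each $i$ there is a smooth flow $F^i_t$, a spacetime point $X_i^0$, and a scale $r_i$ with $\Theta(M^i, X, r_i) \leq 1 + \veps_i$ for all $X \in P_{r_i}(X_i^0)$, where $\veps_i \to 0$, yet $\sup_{P(X_i^0, r_i/2)}|A_i| > i/r_i$. By the scale invariance of the Gaussian density (Lemma \ref{lem-huiskenscaleinvariance}) I may parabolically rescale each flow so that $r_i = 1$; this normalizes the hypothesis to $\Theta \leq 1 + \veps_i$ on $P_1(X_i^0)$ while the curvature supremum on the half‑cylinder tends to infinity.

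Next I would run Hamilton's point‑selection to choose spacetime points $X_i = (x_i,t_i)$ in the half‑cylinder at which $|A_i|$ is comparable to its maximum on a slightly smaller region, and rescale parabolically about $X_i$ by the factor $\lambda_i := |A_i(X_i)| \to \infty$. After this second rescaling the flows satisfy $|A_i|(X_i) = 1$ at the base point and $|A_i| \leq 2$ on parabolic balls of radius $R_i \to \infty$. Crucially, Lemma \ref{lem-huiskenscaleinvariance} again preserves the density hypothesis under this rescaling. The uniform local curvature bound, together with the area bounds implied by the density hypothesis and the interior estimates of Ecker–Huisken (which bound all covariant derivatives of $A$), lets me apply Arzel\`a–Ascoli to extract a subsequence converging smoothly on compact subsets of spacetime to a \emph{nontrivial, smooth, eternal} limit flow $F^\infty$ with $|A^\infty|(0) = 1$.

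I would then show the limit has Gaussian density identically $1$. For any fixed center $X$ and scale $\rho$, Lemma \ref{lem-huiskenscaleinvariance} identifies $\Theta(F^i,X,\rho)$ with a density ratio of the pre‑rescaled flow at scale $\rho/\lambda_i \to 0$, which by Huisken monotonicity is bounded above by the scale‑$1$ value $\leq 1 + \veps_i$. Passing to the limit, using that $\Theta(\cdot, X, \rho)$ is continuous under the smooth convergence and always at least $1$, gives $\Theta(F^\infty, X, \rho) = 1$ for every spacetime point $X$ and every $\rho > 0$. Finally I invoke the equality case of Huisken's monotonicity formula (Theorem \ref{thm-monotonicity}): constancy in $\rho$ of $\Theta(F^\infty, X_0, \cdot)$ forces the error integrand to vanish, so $\vec{H} = -\tfrac{(x-x_0)^\perp}{2(t_0 - t)}$ for every center $(x_0,t_0)$. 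Comparing two centers with the same point $x_0$ but distinct times forces $(x-x_0)^\perp \equiv 0$ and hence $\vec{H} \equiv 0$, so $F^\infty$ is a smooth minimal cone, i.e. a static plane, whence $A^\infty \equiv 0$ --- contradicting $|A^\infty|(0) = 1$.

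The main obstacle is the compactness and density‑of‑limit step rather than the concluding rigidity. Two points require care: first, one must upgrade the density hypothesis to genuine local area (mass) bounds so that the blowup limit does not lose mass and is a nondegenerate smooth flow --- this is where a Brakke‑type compactness statement, or at a minimum the Ecker–Huisken estimates applied uniformly along the rescalings, is essential; second, one needs the continuity (upper semicontinuity) of the Gaussian density under the convergence, which is precisely what transfers the inequality ``$\Theta \to 1$'' into the rigid equality ``$\Theta \equiv 1$ at every scale and center'' in the limit. Once those are in hand, the equality case of the monotonicity formula closes the argument cleanly.
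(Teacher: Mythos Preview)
The paper does not prove this theorem. White's Regularity Theorem is stated in the preliminaries section as a known result, with a citation to \cite{white:1}, and no proof is given in the paper itself; it is used as a black box in the arguments of Section~\ref{sec-s1mcf}. There is therefore no ``paper's own proof'' to compare your proposal against.

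That said, your outline is the standard contradiction/blowup argument for this result and is correct in its architecture: negate the conclusion, normalize by parabolic rescaling, apply point-picking to get a sequence with curvature $1$ at the basepoint and bounded nearby, extract a smooth limit, use scale invariance and monotonicity to force the limit to have Gaussian density identically $1$, and conclude via the rigidity case of Huisken's formula that the limit is a plane, contradicting $|A^\infty|(0)=1$. You have also correctly identified the genuinely delicate step, namely the compactness of the blowup sequence and the passage of the density bound to the limit; in White's original paper this is handled in the weak (Brakke flow) category, which is why the result is typically cited rather than reproved in applications.
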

It follows from standard theory of elliptic PDEs that under the same conditions, for similar universal constants $C_k$,
\[ \sup_{X \in P(X_0,\frac{r}{4})} | \nabla^k A| \, \leq \, \frac{C_k}{r^{m+1}}. \]
This regularity theorem is useful for proving smooth convergence to the Type I model. If we know that the density of the limiting model is close to 1, then we can use convergence in measure to show that the density of our flows is also close to 1, and White will then imply that our curvatures are bounded, so the convergence is smooth.

The Type I blowup procedure results only in a weak flow for Type II singularities. The trick to resolving these singularities smoothly is to take a sequence of spacetime points $(x_i,t_i)$ maximising the second fundamental form $A_{t_i}$, and then to perform a parabolic rescaling with factor $|A_{t_i}(p_i)|$ around that point to normalise its value to $1$. There is a complication however - in order to have a smooth convergence to the blowup, we need control on $|A|$ for a period of time around $t_i$. To achieve this, we choose a sequence of times $t_k \in [0, T-\tfrac{1}{k}]$ and points $p_k \in M$ such that:
\begin{align}
|A_{t_k}(p_k)|^2\left( T - \tfrac{1}{k} - t_k \right) \, = \, \max_{t \in [0,T-\tfrac{1}{k}], \, p \in M} \left( |A_t(p)|^2 \left( T- \tfrac{1}{k} - t \right) \right). \label{eq-typeiicondition}
\end{align}
Note that the second fundamental form at time ${t_k}$ is maximised at the point $p_k$. It then follows from the Type II condition (see e.g. \cite{mantegazza:1}) that one can choose a subsequence such that:
\begin{itemize}
	\item $|A_{t_k}(p_k)| \rightarrow \infty$ monotonically,
	\item $|A_{t_k}(p_k)|^2\left( T- \tfrac{1}{k} - t_k\right) \rightarrow \infty$,
	\item $p_k \rightarrow p$ for some $p \in M$,
\end{itemize}
where the last point is immediate if our manifold is compact, and otherwise must be proven.

Now we rescale the flow $F_t$, restricted to the time interval $[0,T-\tfrac{1}{k}]$, parabolically with factor $A_k := |A_{t_k}(p_k)|$ around $(x_k, t_k)$:
\[ F^{(x_k,t_k)}_\tau(p) \, := \, A_k \left( F_{t_k + A_k^{-2}\tau}(p) - x_k \right) \]
where $x_k := F_{t_k}(p_k)$. This flow is defined for $\tau \in I_k := \left[ -A_k^2 t_k, A_k^2\left( T-\tfrac{1}{k} - t_k \right) \right]$. These rescalings converge locally smoothly to a limiting eternal flow (see e.g. \cite{mantegazza:1})- a \textbf{Type II blowup}, and the value of $|A|$ for this blowup takes a maximum of $1$ over time and space. By the definition of the rescalings, this maximum value is achieved at the spacetime point $(0,0)$.

\subsection{Lagrangian Submanifolds}

If we denote by $J$ and $\omega$ the standard complex and symplectic structures on $\mathbb{C}^n$,
\[ \omega \, = \, dx^1\wedge dy^1 + dx^2\wedge dy^2 + \cdots + dx^n\wedge dy^n,  \quad \omega(X,Y) = \left\langle JX,Y \right\rangle, \]
and by $\Omega$ the holomorphic volume form, given by \[\Omega \, = \, dz^1 \wedge dz^2 \wedge \cdots \wedge dz^n,\] then a smooth orientable $n$-dimensional submanifold $L$ is said to be \textbf{Lagrangian} if $\omega|_L \, \equiv \, 0$. It follows from a calculation (e.g \cite{harvey:1}) that $\Omega|_L \, = \, e^{i\theta}vol_L$, for some multivalued function $\theta$ known as the \textbf{Lagrangian angle}. If \\ $\{X_1,X_2,\ldots, X_n\} \subset \mathbb{C}^n$ are linearly independent vectors tangent to $L$ at a point $p \in L$, then the Lagrangian angle can be calculated (up to a multiple of $\pi$) as:
\begin{equation}\label{eqn-laganglecalc}
\theta(p) \, = \, \arg(\mbox{det}_{\mathbb{C}}(X_i^j)).
\end{equation}
If we ensure that $vol_L(X_1,X_2,\ldots,X_n)=1$, i.e. an orientation is chosen, then the Lagrangian angle is determined modulo $2\pi$ by this method.

If the Lagrangian angle is a single-valued function, then $L$ is known as a \textbf{zero-Maslov} Lagrangian - this is equivalent to the \textbf{Maslov class} $[d\theta] \in H^1(L)$ vanishing. If the Lagrangian angle takes only a single constant value, then we say $L$ is a \textbf{special Lagrangian}. These are particularly interesting because they are calibrated and therefore minimal.

\subsection{Lagrangian Mean Curvature Flow}
The foundational result of Lagrangian mean curvature flow is that mean curvature flow preserves the class of Lagrangian submanifolds (see \cite{smoczyk:4} for details).
\begin{theorem}\label{thm-lagmcf}
	Let $L_t$ be a mean curvature flow in $\mathbb{C}^n$ for $t \in [0,T)$, such that $L_{0}$ is a Lagrangian submanifold. Then:
	\begin{itemize}
		\item $L_{t}$ is a Lagrangian submanifold for all $t \in [0,T),$
		\item $\vec{H} \, = \, J\nabla \theta,$
		\item If $L_{0}$ is zero-Maslov, then $L_{t}$ is zero-Maslov also, and $\frac{d}{dt} \theta \, = \, \Delta \theta.$
	\end{itemize}
\end{theorem}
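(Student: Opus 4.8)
The plan is to establish the three assertions in turn: preservation of the Lagrangian condition, via the evolution of the pulled-back symplectic form $F_t^*\omega$; the identity $\vec{H} = J\nabla\theta$, by covariantly differentiating the parallel holomorphic volume form $\Omega$ along $L_t$; and the heat equation $\partial_t\theta = \Delta\theta$, from which preservation of the zero-Maslov condition follows by integration.

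For the first assertion, set $\alpha_t := F_t^*\omega$, a $2$-form on $M$, and aim to show that $\alpha_0 = 0$ forces $\alpha_t \equiv 0$. Since $\omega$ is closed, differentiating under the pullback gives $\partial_t\alpha_t = d\big(F_t^*(\iota_{\vec{H}}\omega)\big) = d\sigma_{H_t}$, where $\sigma_{H_t}(v) := \omega(\vec{H},dF_t(v)) = \langle J\vec{H},dF_t(v)\rangle$ is the mean curvature one-form. If $L_t$ were already known to be Lagrangian one could identify $\sigma_{H_t} = d\theta_t$ and conclude $\partial_t\alpha_t = 0$ immediately; to avoid this circularity I would instead expand $d\sigma_{H_t}$ directly. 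Writing mean curvature flow as $\partial_t F_t = \Delta_{g_t}F_t$ (the Laplacian of the position map with respect to the induced metric $g_t$, using $\nabla_i\partial_j F = A_{ij}$), commuting $\partial_i$ past $\Delta_{g_t}$, and exploiting that $\omega$ has constant coefficients on $\mathbb{C}^n$ and is $J$-invariant, the terms quadratic in the second fundamental form reorganise --- precisely because $\mathbb{C}^n$ is Ricci-flat --- into an expression that is linear and homogeneous in $\alpha_t$ and $\nabla\alpha_t$. Thus $\alpha_t$ solves a linear parabolic system of schematic form $\partial_t\alpha = \Delta_{g_t}\alpha + A*A*\alpha + A*\nabla\alpha$, for which $\alpha\equiv 0$ is a solution, and uniqueness (using the controlled geometry of the flow on compact time subintervals) forces $\alpha_t\equiv 0$. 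Geometrically this reflects that $\vec{H}$, under the identification $NL\cong T^*L$ furnished by $J$ and $g$, is the closed Maslov one-form, a direction tangent to the space of Lagrangian immersions.

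For the second assertion, now that $L_t$ is Lagrangian the Lagrangian angle $\theta_t$ is defined by $\Omega|_{L_t} = e^{i\theta_t}\,\mathrm{vol}_{L_t}$, as in (\ref{eqn-laganglecalc}). I would fix a local orthonormal frame $e_1,\dots,e_n$ of $TL_t$ and differentiate $\Omega(e_1,\dots,e_n) = e^{i\theta_t}$ with the ambient flat connection $\bar\nabla$, using $\bar\nabla\Omega = 0$ and the Gauss formula $\bar\nabla_{e_k}e_j = \nabla_{e_k}e_j + A(e_k,e_j)$. The tangential terms vanish because the frame is orthonormal ($\langle\nabla_{e_k}e_j,e_j\rangle = 0$); in the normal terms one writes $A(e_k,e_j) = \sum_l h_{kjl}Je_l$ with $h_{kjl}$ the fully symmetric (since $L_t$ is Lagrangian) cubic form, and uses $\mathbb{C}$-multilinearity of $\Omega$ together with $\Omega(\dots,Je_l,\dots,e_l,\dots) = 0$ for $l\neq j$, so that only $l = j$ contributes, giving $i\langle\vec{H},Je_k\rangle e^{i\theta_t}$. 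Comparing with $e_k(e^{i\theta_t}) = i(\partial_k\theta_t)e^{i\theta_t}$ yields $d\theta_t = \langle\vec{H},J\,\cdot\,\rangle$; pairing with the normal frame $\{Je_m\}$ (which is orthonormal in $NL_t$ and spans it) then gives $\vec{H} = J\nabla\theta_t$.

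For the third assertion I would differentiate the identity $e^{i\theta_t} = \Omega(\partial_1 F_t,\dots,\partial_n F_t)(\det g_t)^{-1/2}$ in $t$, substituting $\partial_t\partial_i F_t = \partial_i\vec{H} = \partial_i(J\nabla\theta_t)$, $\partial_t g_{ij} = -2\langle\vec{H},A_{ij}\rangle$, and $\bar\nabla\Omega = 0$; after cancellation this collapses to $\partial_t\theta_t = \Delta_{L_t}\theta_t$. Since $d\theta_t$ is a single-valued closed one-form on $M$ regardless of the Maslov class, so is $\Delta_{L_t}\theta_t = \mathrm{div}_{L_t}(\nabla\theta_t)$; hence if $\theta_0$ is single-valued then $\theta_t = \theta_0 + \int_0^t\Delta_{L_s}\theta_s\,ds$ is single-valued, i.e.\ $L_t$ remains zero-Maslov. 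I expect the main obstacle to be the first step: the naive evolution of $\alpha_t$ wants to invoke $\sigma_{H_t} = d\theta_t$, which presupposes what is being proved, so the genuine work is to reorganise $\partial_t\alpha_t$ into a closed linear parabolic system and to verify that Ricci-flatness of $\mathbb{C}^n$ is exactly what kills the inhomogeneous term that would otherwise appear (as it does for Lagrangian mean curvature flow in a general Kähler--Einstein background).
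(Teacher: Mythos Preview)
The paper does not supply its own proof of this theorem: it is stated in the preliminaries with the remark ``see \cite{smoczyk:4} for details'', so there is nothing to compare your argument against line-by-line. Your outline is essentially the standard Smoczyk argument that the paper is citing --- deriving a linear parabolic evolution for $F_t^*\omega$ to get preservation of the Lagrangian condition, then the calibration computation for $\vec H = J\nabla\theta$, then the time-derivative of $e^{i\theta_t} = \Omega|_{L_t}/\mathrm{vol}_{L_t}$ for the heat equation --- and the pieces you sketch for the second and third assertions are correct as stated.

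The only place your write-up is thin is exactly where you flag it: in the first step, the assertion that the quadratic-in-$A$ terms in $\partial_t\alpha$ reorganise into something \emph{linear in $\alpha$} is the entire content of Smoczyk's computation, and you have not actually indicated how that cancellation occurs. (Your intermediate identity $\partial_t\alpha = d\sigma_H$ is fine, but from there one still has to write $\vec H$ in terms of the second fundamental form, commute derivatives, and use the symmetry $h_{ijk} = h_{jik}$ together with $\omega(Je_i,e_j) = g_{ij}$ to see that every surviving term carries a factor of $\alpha$ or $\nabla\alpha$.) If you were writing this out in full rather than citing it, that is the paragraph that would need to be expanded; as a summary of the literature it is accurate.
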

Singularities of LMCF have been studied extensively, for example by Neves in \cite{neves:1} and \cite{neves:2}. The first of these papers contains two important theorems for the zero-Maslov case. Theorem A tells us that any Type I blowup of a zero-Maslov LMCF looks like a union of special Lagrangian cones:
\begin{theorem}[Neves' Theorem A]\label{thm-a}
	If $L_0$ is a zero-Maslov class Lagrangian with bounded Lagrangian angle, then for any sequence of Type I rescaled flows $(L^i_s)_{s<0}$ at a singularity, with Lagrangian angle $\theta^i_s$, there exist a finite set $\{ \overline\theta_1,\ldots, \overline\theta_N \}$ and integral special Lagrangian cones $\{ L_1, \ldots, L_N \}$ such that on passing to a subsequence, for every $f \in C^2(\mathbb{R})$, $\phi \in C^{\infty}_c(\mathbb{C}^n)$ and $s<0$,
	\[ \lim_{i \rightarrow \infty} \int_{L^i_s}f(\theta^i_{s})\phi d\mathcal{H}^n \, = \, \sum_{j=1}^N m_j f(\overline\theta_j) \mu_j(\phi),\]
	where $\mu_j$, $m_j$ denote the Radon measure of the support and multiplicity of $L_j$ repectively. Furthermore, the set  $\{ \overline\theta_1,\ldots, \overline\theta_N \}$ doesn't depend on the sequence chosen.
\end{theorem}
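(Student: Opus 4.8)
The plan is to follow the strategy underlying Neves' original argument, which rests on just two inputs: Huisken's monotonicity formula (Theorem~\ref{thm-monotonicity}), together with its scale-invariant reformulation (Lemma~\ref{lem-huiskenscaleinvariance}), and the fact that along a zero-Maslov Lagrangian mean curvature flow the Lagrangian angle solves $\frac{d}{dt}\theta = \Delta\theta$ with $\vec H = J\nabla\theta$ (Theorem~\ref{thm-lagmcf}). First I would note that, since the Gaussian density ratio $\Theta(L,X,r)$ is monotone in $r$ and bounded for the original flow, the parabolically rescaled flows $(L^i_s)$ have locally uniformly bounded area ratios; compactness for integral Brakke flows then produces a subsequential limit $(L^\infty_s)_{s<0}$. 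Monotonicity of $\Theta$ in $r$, combined with Lemma~\ref{lem-huiskenscaleinvariance}, forces the error term $\int_{L_t}\big|\vec H + \tfrac{(x-x_0)^\perp}{2(t_0-t)}\big|^2\Phi_{X_0}\,d\mathcal H^n$ to have vanishing time-integral in the blowup limit, so that $L^\infty_s$ is backwards self-similar; this, with the Lagrangian identities, will eventually yield that each piece is a cone.

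The heart of the argument is to control the angle. Applying the general form of Theorem~\ref{thm-monotonicity} with $f_t = f(\theta_t)$ for a fixed smooth strictly convex $f \ge 0$ (for instance $f(\theta) = (\theta-a)^2$, which is bounded on the bounded range of $\theta$), and using $\tfrac{d}{dt}f(\theta) = f'(\theta)\Delta\theta$ and $\Delta f(\theta) = f'(\theta)\Delta\theta + f''(\theta)|\nabla\theta|^2$, one obtains
\[ \frac{d}{dt}\int_{L_t} f(\theta)\,\Phi_{X_0}\,d\mathcal H^n \;=\; -\int_{L_t}\Big( f''(\theta)\,|\nabla\theta|^2 + f(\theta)\,\big|\vec H + \tfrac{(x-x_0)^\perp}{2(t_0-t)}\big|^2\Big)\Phi_{X_0}\,d\mathcal H^n \;\le\; 0. \]
Since $\theta$ is bounded the left-hand integral is bounded, hence converges as $t \to t_0$; by Lemma~\ref{lem-huiskenscaleinvariance} and the scale-invariance of $\theta$, this limit equals $\lim_i \int_{L^i_s}f(\theta^i_s)\,\Phi_{(0,0)}(\cdot,s)\,d\mathcal H^n$ for each fixed $s<0$, so the corresponding quantity for the limit flow is constant in $s$. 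Running the same identity on $L^\infty_s$ then forces both $f''(\theta^\infty)|\nabla\theta^\infty|^2$ and the self-similarity defect to vanish; strict convexity of $f$ gives $\nabla\theta^\infty \equiv 0$, so $\theta^\infty$ is locally constant, and with $\vec H^\infty = J\nabla\theta^\infty = 0$ the self-shrinker equation gives $x^\perp \equiv 0$, i.e. $L^\infty_s$ is a cone, constant in $s$ up to scaling. Decomposing according to the locally constant value of $\theta^\infty$ produces integral special Lagrangian cones $L_1,\dots,L_N$ with angles $\bar\theta_1,\dots,\bar\theta_N$, and finiteness of $N$ follows from the uniform bound on $\Theta$, since each sheet contributes a definite amount of Gaussian density.

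It remains to pass from weak varifold convergence to the precise formula and to establish independence of the angle set. For $f$ as above and $\phi \in C^\infty_c(\mathbb C^n)$, the convergence $L^i_s \to L^\infty_s$ gives convergence of $\int_{L^i_s}f(\theta^i_s)\phi\,d\mathcal H^n$ to the mass of a limit measure $\sigma_s$, and the point is that $\sigma_s = \sum_j m_j f(\bar\theta_j)\mu_j$: this is where one uses that the Gaussian-weighted Dirichlet energy of $\theta^i_s$ tends to zero near the singularity, so $\theta^i_s$ cannot oscillate appreciably on the bulk of $L^i_s \cap \operatorname{supp}\phi$ and its pushforward concentrates on the level sets $\{\theta^\infty = \bar\theta_j\}$, ruling out mass at intermediate angles. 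Once this rigidity is in hand, choosing $f(\theta) = e^{a\theta}$ and combining with the previous paragraph gives, for every $a \in \mathbb R$,
\[ \lim_{t \to t_0}\int_{L_t}e^{a\theta}\,\Phi_{X_0}\,d\mathcal H^n \;=\; \sum_{j=1}^N m_j\,\nu_j\,e^{a\bar\theta_j}, \qquad \nu_j := \int_{L_j}\Phi_{(0,0)}(\cdot,-1)\,d\mathcal H^n \;>\; 0 ; \]
the left-hand side, being the limit of a monotone quantity, is independent of the rescaling sequence, and an exponential sum determines its distinct exponents, so the set $\{\bar\theta_1,\dots,\bar\theta_N\}$ is sequence-independent. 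I expect the main obstacle to be precisely this rigidity step — upgrading the integrated smallness of $\int_{L_t}|\nabla\theta|^2\Phi_{X_0}$ to pointwise-in-time, localized control strong enough to identify the weak limit of $f(\theta^i_s)\,d\mathcal H^n$ on $L^i_s$ with $\sum_j m_j f(\bar\theta_j)\mu_j$ rather than a measure spread over a range of angles; this is the technical core of Neves' argument.
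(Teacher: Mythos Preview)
This theorem is not proved in the paper: it is stated in the preliminaries as ``Neves' Theorem A'' and attributed to \cite{neves:1}, with no proof given. There is therefore nothing in the paper to compare your proposal against.

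For what it is worth, your outline does follow the architecture of Neves' original argument: use Huisken monotonicity with convex functions of $\theta$ to control $\int |\nabla\theta|^2\Phi$, deduce that the limit is a self-shrinking cone with locally constant angle, and extract sequence-independence of the angle set from monotone quantities. Your own caveat about the ``rigidity step'' is well placed: the passage from integrated smallness of $\int_{L_t}|\nabla\theta|^2\Phi_{X_0}\,dt$ to the identification of the weak limit of $f(\theta^i_s)\,d\mathcal H^n$ with $\sum_j m_j f(\bar\theta_j)\mu_j$ is indeed the delicate point, and your sketch does not yet supply it. But since the paper under review simply quotes the result, any further assessment would be of Neves' paper rather than this one.
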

Theorem B tells us that these cones in fact have the same Lagrangian angle, if we assume a couple of extra conditions. A Lagrangian submanifold is \textbf{almost-calibrated} if the Lagrangian angle has a range of less than $\pi$, explicitly:
\[ \exists \, \overline{\theta}, \, \, \exists \veps > 0 \quad s.t \quad \theta \in \left( \overline{\theta}-\frac{\pi}{2} + \veps, \, \overline{\theta} + \frac{\pi}{2} - \veps \right).\]
It is a strengthening of zero-Maslov. Almost-calibrated Lagrangian mean curvature flow may only form Type II singularities, as shown by M-T. Wang \cite{wang:1}:

\begin{theorem}\label{thm-ACLMCFsing}
	An almost-calibrated Lagrangian mean curvature flow $L_t$ in a K\"ahler-Einstein manifold $M$ cannot form a Type I singularity.
\end{theorem}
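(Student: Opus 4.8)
The plan is to argue by contradiction, in the spirit of \cite{wang:1}: assume $L_t$ develops a Type I singularity at a spacetime point, which after translating we take to be $(0,T)$, and apply Huisken's monotonicity formula to a carefully chosen function of the Lagrangian angle to force the Type I blowup to be so rigid that it contradicts $L_t$ being singular at $(0,T)$. I describe the argument in $\mathbb{C}^n$; the general K\"ahler--Einstein case is analogous once the corresponding monotonicity formula and angle evolution are used. Since an almost-calibrated flow is zero-Maslov, Theorem \ref{thm-lagmcf} gives $\vec H=J\nabla\theta$ and $\p_t\theta=\Delta\theta$; by the maximum principle $\max_{L_t}\theta$ is non-increasing and $\min_{L_t}\theta$ non-decreasing, so after subtracting a constant we may assume that $\theta_t$ takes values in $(-\tfrac\pi2+\veps,\tfrac\pi2-\veps)$ for all $t$, and in particular $\cos\theta\ge\sin\veps>0$.

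First I would set up the monotone quantity. Applying the extended form of Huisken's monotonicity formula (Theorem \ref{thm-monotonicity}) with the bounded function $f_t=1-\cos\theta_t$, and using $\p_t\theta=\Delta\theta$ to compute $\p_t f_t-\Delta f_t=-\cos\theta\,|\nabla\theta|^2$, one obtains
\[
\frac{d}{dt}\int_{L_t}(1-\cos\theta)\,\Phi_{(0,T)}\;=\;-\int_{L_t}\Big(\cos\theta\,|\nabla\theta|^2+(1-\cos\theta)\,\big|\vec H+\tfrac{x^{\perp}}{2(T-t)}\big|^2\Big)\,\Phi_{(0,T)}\;\le\;0,
\]
both terms on the right being non-negative. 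Hence $t\mapsto\int_{L_t}(1-\cos\theta)\,\Phi_{(0,T)}$ is non-increasing and bounded below by $0$, so it converges to some $C_\infty\ge0$ as $t\to T$, and the time-integral of the right-hand side over $[0,T)$ is finite.

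Next I would pass to the Type I blowup. Parabolically rescale about $(0,T)$, setting $L^\lambda_s:=\lambda L_{T+\lambda^{-2}s}$; by the Type I bound on $|A|$ together with the area-ratio bounds furnished by Theorem \ref{thm-monotonicity}, a sequence $\lambda_i\to\infty$ yields a limiting self-shrinking flow $\Sigma_s$ (as in the discussion preceding the theorem, or via Neves' Theorem \ref{thm-a}), which is invariant under parabolic dilations about $(0,0)$ and satisfies $\vec H+\tfrac{x^{\perp}}{2(-s)}=0$. The Lagrangian angle is invariant under parabolic rescaling, being computed from $\arg\det_{\mathbb C}$ of a tangent frame by (\ref{eqn-laganglecalc}), so $\Sigma_s$ still satisfies $\cos\theta\ge\sin\veps$; and by the scale-invariance of the weighted integral (Lemma \ref{lem-huiskenscaleinvariance}) together with that of $\theta$,
\[
\int_{L^{\lambda_i}_s}(1-\cos\theta)\,\Phi_{(0,0)}\;=\;\int_{L_{T+\lambda_i^{-2}s}}(1-\cos\theta)\,\Phi_{(0,T)}\;\longrightarrow\;C_\infty
\]
as $i\to\infty$, for every fixed $s<0$. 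Passing to the limit flow — here one must control the Gaussian tails, which is standard given the area-ratio bounds — gives $\int_{\Sigma_s}(1-\cos\theta)\,\Phi_{(0,0)}=C_\infty$ for all $s<0$. Since this is constant in $s$, the monotonicity formula applied to $\Sigma_s$ forces both terms of its right-hand side to vanish identically; as $\cos\theta\ge\sin\veps>0$, we deduce $\nabla\theta\equiv0$, so the blowup has constant Lagrangian angle, hence is special Lagrangian and therefore minimal, $\vec H\equiv0$. Combined with the self-shrinker equation this gives $x^{\perp}\equiv0$, so $\Sigma:=\Sigma_{-1}$ is a static special Lagrangian cone through the origin; a smooth complete minimal cone is a plane, so $\Sigma$ is a (multiplicity-one) plane. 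But then $\lim_{r\to0}\Theta(L,(0,T),r)=1$, and White's regularity theorem (Theorem \ref{thm-white}) shows that $L_t$ is smooth in a neighbourhood of $(0,T)$ — contradicting that $(0,T)$ is a singular point.

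I expect the main obstacle to be the passage to the limit in the penultimate display — that the Gaussian-weighted angle quantity is continuous under the (in general non-compact) blowup convergence — together with the step asserting that the blowup is a smooth multiplicity-one plane, i.e.\ ruling out a higher-multiplicity plane or a genuinely singular special Lagrangian cone as the blowup. The remaining ingredients — the monotonicity computation for $f_t=1-\cos\theta$ and the identity $\vec H=J\nabla\theta$ — are routine once these regularity and compactness matters are settled.
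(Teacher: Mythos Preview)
The paper does not give its own proof of this theorem: it is quoted as a known result of M.-T.\ Wang, with the attribution ``as shown by M-T.\ Wang \cite{wang:1}'' immediately preceding the statement and no proof following. So there is nothing in the paper to compare your argument against.

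That said, your sketch is a faithful reconstruction of Wang's method: the key ingredients are exactly the heat-equation evolution $\p_t\theta=\Delta\theta$, the weighted monotonicity of $\int(1-\cos\theta)\,\Phi$ (your computation $\p_t f-\Delta f=-\cos\theta\,|\nabla\theta|^2$ is correct), and the observation that on any Type I blowup this forces $\nabla\theta\equiv 0$, hence $\vec H\equiv 0$ and $x^\perp\equiv 0$, so the blowup is a smooth minimal cone and therefore a plane, contradicting the existence of a singularity via White regularity. One remark: under the Type I hypothesis the rescaled flows have uniformly bounded $|A|$, so the subsequential convergence to the blowup is locally smooth as immersions, which already rules out higher multiplicity and singular cones --- the obstacles you flag at the end are therefore less serious here than in the general (Type II) setting. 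The Gaussian-tail issue is handled by the uniform area-ratio bounds coming from Huisken monotonicity, as you indicate.
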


A Lagrangian submanifold is \textbf{rational} if some $a \in \mathbb{R}$,
\[ \lambda(H_1(L_0,\mathbb{Z})) \, \in \, \{ a2k\pi | k \in \mathbb{Z} \}, \]
for $\lambda := \sum_{i=0}^n x^i dy^i -y^i dx^i$ the Liouville form. The rational condition is a generalisation of exactness - the form $\lambda|_{L}$ being exact is precisely $L$ being rational with $a=0$.

Both of these conditions are preserved by mean curvature flow - for example preservation of `almost-calibrated' follows from the evolution equation for $\theta$ in Theorem \ref{thm-lagmcf}. See \cite{neves:1} for a proof of preservation of rationality.
\begin{theorem}[Neves' Theorem B]\label{thm-b}
	If $L_0$ is almost-calibrated and rational, then after passing to a subsequence of the rescaled flows $L^i_s$, with Lagrangian angle $\theta^i_s$, the following holds for all $R > 0$ and almost all $s < 0$.	
	
	For any convergent subsequence (in the Radon measure sense) $\Sigma^i$ of connected components of $B_{4R}(0)\cap L^i_s$ intersecting $B_R(0)$, there exists a special Lagrangian cone $L$ in $B_{2R}(0)$ with Lagrangian angle $\overline{\theta}$ such that for every $f \in C(\mathbb{R})$ and every $\phi \in C^{\infty}_c (B_{2R}(0))$,
	\[ \lim_{i \rightarrow \infty} \int_{\Sigma^i}f(\theta^i_{s})\phi d\mathcal{H}^n \, = \, m f(\overline{\theta}) \mu(\phi),\]	
	where $\mu$ and $m$ denote the Radon measure of the support of $L$ and the multiplicity respectively.
\end{theorem}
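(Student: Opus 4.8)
The plan is to take the measure-theoretic decomposition of the blowup provided by Theorem \ref{thm-a} as the starting point and to upgrade it, on each connected component, to a single-angle statement. Theorem \ref{thm-a} already tells us that along a subsequence the rescaled flows $L^i_s$ converge, in the Radon-measure sense, to a finite union of integral special Lagrangian cones $L_1,\dots,L_N$ with constant angles $\overline\theta_1,\dots,\overline\theta_N$. The two additional ingredients I would add are (i) the parabolic control of the Lagrangian angle coming from its heat equation, and (ii) the rationality hypothesis, which I will use to make the Liouville form \emph{exact} on bounded pieces of the blowup. The whole point of Theorem \ref{thm-b} is that connectedness must forbid a single component from bridging two distinct cone-angles, and rationality is precisely what enforces this.

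First I would record the angle estimate. Since $\partial_t\theta=\Delta\theta$ (Theorem \ref{thm-lagmcf}), applying Huisken's monotonicity formula (Theorem \ref{thm-monotonicity}) to $f_t=\theta^2$ gives $\partial_t\theta^2-\Delta\theta^2=-2|\nabla\theta|^2$, and since the almost-calibrated condition bounds $\theta$, integrating yields on any finite interval $I$ the spacetime bound $\int_I\int_{L^i_s}|\nabla\theta^i_s|^2\,\Phi\,d\mathcal H^n\,ds\to 0$. Hence for almost every $s$, after a further subsequence, $\nabla\theta^i_s\to 0$ locally, which together with Theorem \ref{thm-a} identifies each limiting slice as a static special Lagrangian cone. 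Feeding the measure convergence and the near-integer Gaussian density into White's regularity theorem (Theorem \ref{thm-white}) upgrades this to \emph{smooth} convergence, with multiplicity, on every annulus $B_{4R}\setminus B_\veps$, so that there the angles $\theta^i_s$ converge uniformly to the locally constant cone values.

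Next comes the rationality step, the source of the single-angle conclusion. Under the parabolic rescaling $F\mapsto\lambda_i(F-x_0)$ the Liouville form $\lambda$ has its periods multiplied by $\lambda_i^2$ (the recentering by $x_0$ only contributes an exact term), so rationality of $L_0$, preserved by the flow, forces the periods of $\lambda$ over cycles of $L^i_s$ to lie in $\lambda_i^2\,a\,2\pi\mathbb Z$. On the bounded set $B_{4R}$ every such period is bounded a priori by $\sup_{B_{4R}}|JF|$ times the uniformly controlled intrinsic length of a generating cycle, hence by a constant $C(R)$ independent of $i$; since the lattice spacing tends to infinity as $\lambda_i\to\infty$, all these periods must vanish once $i$ is large. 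Therefore $\lambda$ admits a single-valued primitive $\beta^i$ on each connected component of $B_{4R}\cap L^i_s$, with $d\beta^i=\langle JF,dF\rangle$ and oscillation bounded by $C(R)$.

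The decisive step combines these. Computing the Lie derivative of $\lambda$ along the flow (using $d\lambda=2\omega$ and $\vec H=J\nabla\theta$) gives the evolution $\partial_\tau\beta^i=\langle F,\nabla\theta^i\rangle-2\theta^i+c_i(\tau)$, where $c_i(\tau)$ is a spatial constant. Suppose a connected component $\Sigma^i$ converged to cones of two different angles, and pick points $p^i\to p$, $q^i\to q$ on the two cones inside $B_{4R}\setminus B_\veps$, flowing with the rescaled flow. Since $\nabla\theta^i\to0$ and $\theta^i\to\overline\theta_j,\overline\theta_k$ uniformly there, the constant $c_i$ cancels in the difference and $\partial_\tau[\beta^i(q^i)-\beta^i(p^i)]\to 2(\overline\theta_j-\overline\theta_k)\neq 0$, so this difference grows linearly in $\tau$. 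But $\beta^i(q^i)-\beta^i(p^i)$ is bounded by $C(R)$ uniformly in $\tau$, and the blowup is ancient, so evaluating over an interval $(-T_0,-1)$ with $T_0$ large forces a contradiction. Hence each connected component carries a single angle $\overline\theta$, its limiting support is a single special Lagrangian cone $L$, and $\int_{\Sigma^i}f(\theta^i_s)\phi\to m f(\overline\theta)\mu(\phi)$ follows from the uniform angle convergence. The main obstacle is not this final contradiction but the control feeding into it: one must establish smooth, multiplicity-controlled convergence of \emph{connected components} on annuli (connectedness is not preserved under measure limits) and, crucially, a uniform bound on the oscillation of the primitive $\beta^i$ that persists as the component flows over a long time interval. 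This is where almost-calibratedness is indispensable, ruling out cancellation of cones and guaranteeing the embedded, tame geometry that makes both the uniform bound and the pointwise tracking of the angle legitimate.
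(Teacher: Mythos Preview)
This theorem is not proven in the paper. It appears in the Preliminaries section as a result of Neves, cited from \cite{neves:1}, and is used as a black box throughout the rest of the argument. There is therefore no proof in the paper to compare your proposal against.

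That said, your sketch is broadly in the spirit of Neves' original argument: the heart of his proof is indeed the primitive $\beta$ of the Liouville form, made single-valued on connected components via the rationality hypothesis and the blowup scaling, combined with a parabolic estimate linking $\beta$ and $\theta$ to rule out a single component straddling two distinct cone-angles. A few technical points deserve caution. First, your appeal to White's regularity theorem to obtain smooth convergence ``with multiplicity'' on annuli is not justified as stated: Theorem~\ref{thm-white} yields curvature bounds only when the Gaussian density ratio is close to $1$, not close to an arbitrary integer, so for higher-multiplicity limits one cannot invoke it directly to get smooth sheetwise convergence. Neves works instead with integral estimates---weighted $L^2$ bounds on $\beta$ and on $\theta$ against the backward heat kernel---rather than pointwise smooth convergence. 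Second, your pointwise tracking argument for $\beta^i(q^i)-\beta^i(p^i)$ relies on exactly the uniform smooth convergence that has not been established; Neves' version of the contradiction is carried out at the level of these integral quantities. You correctly identify at the end that the genuine difficulty lies in controlling connected components uniformly over long backward time intervals, and this is indeed where the bulk of the technical work in \cite{neves:1} resides.
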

An important aspect of Theorem \ref{thm-b} to note is that it concerns a sequence of \emph{connected components} of $L^i_s \cap B_{4R}$ in the Type I rescaling, which corresponds to a sequence of connected components in a shrinking ball for the original flow. To ensure that we only get one special Lagrangian in the limit, we must ensure we are only looking at a single connected component in this ball.

\section{$O(n)$-Equivariant Submanifolds in $\mathbb{C}^n$} \label{sec-s1}

An \textbf{$O(n)$-equivariant submanifold} is a submanifold $L\subset \mathbb{C}^n$ that may be expressed as the image of a function
\begin{equation} 
	L: M^1\times S^{n-1} \rightarrow \mathbb{C}^n = \mathbb{R}^n \times \mathbb{R}^n, \quad L(s, \alpha) \, = \, \left( a(s) \alpha, \, b(s) \alpha \right),
\end{equation}
for some smooth functions $a, b: M^1 \rightarrow \mathbb{R}$, where $M$ is a $1$-dimensional manifold. $L$ is invariant under the $O(n)$ action
\begin{align*}
	O(n) \circlearrowright \mathbb{C}^n, \quad \quad 
	A \big ( ({x},{y}) \big ) = 	
	(A{x},A{y})
\end{align*}
for $x, y \in \mathbb{R}^n$, $A \in O(n)$. Of particular importance is that $L(s, \alpha) = -L(s,-\alpha)$, implying that $L$ has reflective symmetry through the origin. The \textbf{profile curve}
\begin{equation}
	l: M^1 \rightarrow \mathbb{C}, \quad l(s) = a(s) + ib(s)
\end{equation}
can therefore be chosen to have reflective symmetry across the origin. We will make this choice (we can think of the profile curve as the intersection $L_t \cap (\mathbb{C}{\times}\{0\}^{n-1})$, if we identify $\mathbb{C}$ with $\mathbb{C}{\times}\{0\}^{n-1}$). Since we demand that the manifold $L$ is connected, if $l$ passes through the origin then we must have a single connected component, and if it does not, then $l$ has two connected components $\gamma$ and $-\gamma$. In the case that $l$ passes through the origin, note that by the reflective symmetry, $\vec{H}=0$ there.

We will now prove some results regarding equivariant submanifolds, in particular that they are Lagrangian, and that the almost-calibated condition implies non-compactness and embeddedness.

\begin{lemma}\label{thm-s1lag}
	An immersed $O(n)$-equivariant surface $L \subset \mathbb{C}^n$ is a Lagrangian submanifold.
\end{lemma}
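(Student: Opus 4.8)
The plan is to verify directly that the pullback of the symplectic form $\omega$ under the parametrisation $L(s,\alpha) = (a(s)\alpha,\, b(s)\alpha)$ vanishes identically. First I would set up coordinates: choose a local coordinate $s$ on $M^1$ and local coordinates $(\alpha^2,\dots,\alpha^n)$ on $S^{n-1}$ (or work with an abstract tangent frame $\{\partial_s, e_2,\dots,e_n\}$ where $e_2,\dots,e_n$ are tangent to the sphere). The tangent space to $L$ at $(s,\alpha)$ is then spanned by the pushforwards of these vectors. Computing, $L_*(\partial_s) = (a'(s)\alpha,\, b'(s)\alpha)$, which as an element of $\mathbb{C}^n$ is $(a'(s) + i b'(s))\alpha = l'(s)\alpha$; and for a vector $v \in T_\alpha S^{n-1}$ (so $v \perp \alpha$ in $\mathbb{R}^n$), $L_*(v) = (a(s) v,\, b(s) v) = (a(s) + i b(s)) v = l(s) v$.

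Next I would evaluate $\omega$ on pairs of these vectors, recalling $\omega(X,Y) = \langle JX, Y\rangle$ where $J$ is multiplication by $i$ and $\langle\cdot,\cdot\rangle$ is the real inner product on $\mathbb{C}^n = \mathbb{R}^{2n}$. There are three cases. For two sphere-directions $v, w \in T_\alpha S^{n-1}$: $\omega(l(s)v,\, l(s)w) = \langle i\, l(s) v,\, l(s) w\rangle = \mathrm{Re}\big(\overline{i\, l(s)}\, l(s)\big)\,\langle v,w\rangle_{\mathbb{R}^n}$ — wait, more carefully, writing $l = a+ib$ so the component in each of the $n$ slots is a scalar multiple of $\alpha$ or $v$, the pairing reduces to $\big(a(s)(-b(s)) - b(s) a(s) \big)$ type terms; the key point is that $\langle i l(s) v, l(s) w\rangle = |l(s)|^2 \langle iv', v'\rangle$ structurally, but since $v, w$ are real vectors in $\mathbb{R}^n$ the imaginary cross-terms cancel and one gets a factor of $\mathrm{Im}(\overline{l}\,l) = 0$. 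For the mixed pair $(\partial_s, v)$: $\omega(l'(s)\alpha,\, l(s) v) = \langle i\, l'(s)\alpha,\, l(s) v\rangle$, and since $\alpha \perp v$ in $\mathbb{R}^n$ this vanishes because every slot pairs a multiple of $\alpha$ with a multiple of $v$. For the pair $(v,w)$ of two distinct sphere directions the same orthogonality-plus-reality argument applies. Hence $\omega|_L \equiv 0$, and $L$ has the correct dimension $n$ with $\omega$ of rank $2n$, so $L$ is Lagrangian.

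The only genuinely substantive step is the sphere-direction computation $\omega(l(s)v, l(s)w) = 0$: here one must use that $v$ and $w$ are \emph{real} vectors (tangent to $S^{n-1} \subset \mathbb{R}^n$), so that although $l(s)v$ and $l(s)w$ are genuinely complex vectors in $\mathbb{C}^n$, the quantity $\langle J(l v), l w\rangle$ expands, slot by slot, into $\sum_k \big(a(s) b(s) - b(s) a(s)\big) v^k w^k = 0$. I expect this to be the main (and only) obstacle, and it is really just bookkeeping with the identification $\mathbb{C}^n \cong \mathbb{R}^n \oplus \mathbb{R}^n$. I would also briefly note the degenerate sub-case where $v$ or $w$ is parallel to $\alpha$ does not arise since $T_\alpha S^{n-1} = \alpha^\perp$, and the case $l(s) = 0$ (profile curve through the origin) is handled by continuity or is vacuous for the symplectic computation. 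Finally, orientability follows from $M^1 \times S^{n-1}$ being orientable (for $n \geq 2$), completing the identification of $L$ as a Lagrangian submanifold.
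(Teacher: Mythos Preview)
Your proposal is correct and follows essentially the same approach as the paper: compute the tangent vectors $\partial_s L = (a'\alpha, b'\alpha)$ and $\partial_{\sigma^i} L = (a\,\partial_{\sigma^i}\alpha, b\,\partial_{\sigma^i}\alpha)$, then check $\omega$ vanishes on all pairs using $\alpha \perp \partial_{\sigma^i}\alpha$ for the mixed term and the cancellation $ab - ba = 0$ for the sphere--sphere term. Your write-up is a bit hesitant in the middle (the ``wait, more carefully'' detour via $\mathrm{Im}(\bar l\, l)$), but the slot-by-slot expansion you land on is exactly the paper's computation; the orientability remark at the end is extraneous to the lemma as stated.
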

\begin{proof}
	We must show that $\omega|_{L} \equiv 0$. If we pick a local coordinate system $(\sigma^1,\ldots, \sigma^{n-1})$ for $S^{n-1}$, the derivatives of $L$ are given by 
	\begin{align*}
		\frac{\p L}{\p s} \, &= \,\left( a'(s) \alpha, \, b'(s) \alpha \right), \quad
		\frac{\p L}{\p \sigma^i} \, = \,\left( a(s) \frac{\p\alpha}{\p \sigma^i}, \, b(s) \frac{\p\alpha}{\p \sigma^i} \right),
	\end{align*}
	where we identify $\mathbb{C}^n$ with $\mathbb{R}^n\times\mathbb{R}^n$.	Remembering that the almost complex structure $J$ in $\mathbb{C}^n$ is given by $J(x,y) \, = \, (-y,x)$, we can then calculate $\omega|_{L}$:
	\begin{align*}
		\omega\left(\frac{\p L}{\p \sigma^i},\frac{\p L}{\p \sigma^j}\right) \, &= \, \left\langle J \left( \frac{\p L}{\p \sigma^i}\right), \, \frac{\p L}{\p \sigma^j} \right\rangle \\
		&= \,\left( -b(s) \frac{\p\alpha}{\p \sigma^i}, \, a(s) \frac{\p\alpha}{\p \sigma^i} \right)\cdot\left( a(s) \frac{\p\alpha}{\p \sigma^j}, \, b(s) \frac{\p\alpha}{\p \sigma^j} \right) = 0, \\
		\omega\left(\frac{\p L}{\p s},\frac{\p L}{\p \sigma^j}\right) \, &= \, \left( -b'(s) \alpha, \, a'(s) \alpha \right) \cdot \left( a(s) \frac{\p\alpha}{\p \sigma^j}, \, b(s) \frac{\p\alpha}{\p \sigma^j} \right)
		= \, 0, \quad\\
		\omega\left(\frac{\p L}{\p s},\frac{\p L}{\p s}\right) \, &= \left( -b'(s) \alpha, \, a'(s) \alpha \right) \cdot \left( a'(s) \alpha, \, b'(s) \alpha \right)
		= \, 0,
	\end{align*}
	where for the second line we use $\alpha \cdot \frac{\p\alpha}{\p \sigma^j} = 0$, which is a property of the sphere $S^{n-1}$.
\end{proof}
Since equivariant submanifolds are Lagrangian, we may consider their Lagrangian angle $\theta$. Locally and up to a multiple of $2\pi$, the Lagrangian angle is given by (\ref{eqn-laganglecalc}) to be
\begin{align}\label{eq-langle}
	\theta(s,\alpha) \, = \, (n-1)\arg(l(s)) \, + \, \arg(l'(s)).
\end{align}
Note that this implies the Lagrangian angle is well-defined for the profile curve $l$. 

\begin{lemma}\label{lem-noncompact}
	A connected, $O(n)$-equivariant, embedded, zero-Maslov Lagrangian submanifold $L$ of $\mathbb{C}^n$ is non-compact and rational. Moreover, if the profile curve contains the origin, then $L \cong \mathbb{R}^n$, and if not, $L \cong \mathbb{R}\times S^{n-1}$.
\end{lemma}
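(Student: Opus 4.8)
The plan is to read everything off the topology of the profile curve together with the explicit formula \eqref{eq-langle} for the Lagrangian angle. After discarding the redundant component (when $l$ misses the origin, the two components $\gamma,-\gamma$ give the same image, so we work with one of them), the profile curve is parametrised by a connected $1$-manifold, hence by $\mathbb{R}$ or by $S^1$; call it $\gamma$. First I would record the structural facts forced by $L$ being an immersed, embedded submanifold: the profile curve is an immersion (from the formulas for $\partial L/\partial s$ and $\partial L/\partial\sigma^i$ in the proof of Lemma \ref{thm-s1lag}, a point with $l'=0$ but $l\neq0$ would drop the rank of $dL$ below $n$), with $l'\neq0$ also at any point where $l=0$; moreover, since a transverse self-crossing of $\gamma$ produces two $n$-planes in $\mathbb{C}^n$ meeting along an $(n-1)$-plane (hence a non-manifold point of $L$), embeddedness forces $\gamma$ to be embedded and to meet the origin at most once, transversally.

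The heart of the matter is to exclude $\gamma\cong S^1$, which is exactly the non-compactness statement. Suppose $\gamma\cong S^1$. If $\gamma$ passed through the origin, the reflective symmetry would force it to do so at two distinct parameter values (the involution on $S^1$ realising $z\mapsto -z$, being nontrivial with a fixed point, has exactly two), contradicting the previous paragraph. So $\gamma$ avoids the origin, and for fixed $\alpha\in S^{n-1}$ the map $s\mapsto(a(s)\alpha,b(s)\alpha)$ is an embedded loop $\sigma\subset L$. Along $\sigma$, \eqref{eq-langle} gives $\theta=(n-1)\arg l+\arg l'$, so
\[ \int_\sigma d\theta \;=\; 2\pi\bigl((n-1)w+\tau\bigr), \]
where $w$ is the winding number of $l$ about the origin and $\tau$ its turning number. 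By the Umlaufsatz $\tau=\pm1$, while $w\in\{0,\tau\}$ ($w=0$ if the origin lies outside $\gamma$, and $w=\tau$ if inside, by the Jordan curve theorem); hence $(n-1)w+\tau\in\{\pm1,\pm n\}$ is nonzero. But $L$ is zero-Maslov, so $d\theta$ is exact and $\int_\sigma d\theta=0$ — a contradiction. Therefore $\gamma\cong\mathbb{R}$. (One could instead quote the fact that a compact Lagrangian in $\mathbb{C}^n$ bounds a nonconstant holomorphic disc of positive Maslov index and so cannot be zero-Maslov, but the above is self-contained.)

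It remains to identify the diffeomorphism type and deduce rationality; non-compactness then follows since $\mathbb{R}^n$ and $\mathbb{R}\times S^{n-1}$ are non-compact. If $l$ meets the origin, say at $s=0$ with $l(0)=0\neq l'(0)$, then $a$ and $b$ are odd, so $a(s)=s\,g_a(s^2)$, $b(s)=s\,g_b(s^2)$ for smooth $g_a,g_b$, and $v\mapsto(g_a(|v|^2)v,\,g_b(|v|^2)v)$ is a smooth map $\mathbb{R}^n\to\mathbb{C}^n$ with image $L$; embeddedness forces it to be injective (a would-be nontrivial quotient must fix the unique collapse point $0$ and act freely, e.g.\ by $-\mathrm{id}$, whose quotient is not a manifold), hence a diffeomorphism onto $L\cong\mathbb{R}^n$. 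If $l$ avoids the origin, then $\gamma$ and $-\gamma$ are disjoint and $\gamma$ is embedded, so $\gamma\times S^{n-1}\to L$ is injective and $L\cong\mathbb{R}\times S^{n-1}$. For rationality: when $L\cong\mathbb{R}^n$, or $L\cong\mathbb{R}\times S^{n-1}$ with $n\geq3$, one has $H_1(L;\mathbb{Z})=0$, so $\lambda|_L$ is exact and $L$ is rational; when $n=2$ and $L\cong\mathbb{R}\times S^1$, $H_1(L;\mathbb{Z})$ is generated by an $S^1$-fibre $\psi\mapsto(a_0\cos\psi,a_0\sin\psi,b_0\cos\psi,b_0\sin\psi)$, and substituting this into $\lambda=\sum_i(x^i\,dy^i-y^i\,dx^i)$ shows the integrand vanishes identically, so this generator has $\lambda$-period $0$ and $L$ is again exact, in particular rational.

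The main obstacle is not any single computation but the bookkeeping linking the hypothesis ``$L$ embedded'' to the properties of the profile curve used above: that it is an embedded immersion meeting the origin at most once, and that the natural parametrisations of $L$ by $\mathbb{R}^n$ or $\mathbb{R}\times S^{n-1}$ are injective. Each step is elementary but requires ruling out folded or overlapping sheets; the period computation and the rationality check, by contrast, are short and purely formula-driven.
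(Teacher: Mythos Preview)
Your proof is correct and follows essentially the same route as the paper: both exclude a compact profile curve by computing $\int_\gamma d\theta = 2\pi\bigl((n-1)w+\tau\bigr)$ with turning number $\tau=\pm1$ and winding number $w\in\{0,\tau\}$, obtaining a nonzero Maslov class. You add more detail on the embeddedness bookkeeping, the explicit diffeomorphisms, and the $n=2$ rationality period computation, whereas the paper simply notes that $H_1(L;\mathbb{Z})$ is cyclic so rationality is automatic; these are elaborations rather than a different argument.
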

\begin{proof}
	Assume that the profile curve $l$ is compact, for a contradiction. Consider for simplicity a connected component $\gamma$ of the profile curve $l$, which is embedded by assumption and homeomorphic to a circle. Firstly we claim that $O \notin \gamma$.
	Otherwise, we could parametrise $\gamma$ by unit-speed such that 
	\[\gamma(0) = O, \quad \gamma(s) = -\gamma(-s) \]
	by the $O(n)$-equivariance. But then since $\gamma$ is compact, there must exist $S > 0$ such that $\gamma(S) = \gamma(-S)$, which implies that $\gamma(S) = O$. This contradicts embeddedness of $\gamma$.
	
	Now consider the following integral:
	\begin{align*}
		\int_\gamma d\theta \, &= \, \int \frac{\p}{\p s} \arg(\gamma'(s))\,ds \, + \, (n-1)\int \frac{\p}{\p s}\arg(\gamma(s)) \, ds\\
		&= \, 2\pi T(\gamma) \, + \, 2(n-1)\pi W_O(\gamma),
	\end{align*}
	where $T(\gamma)$ is the turning number, and $W_O(\gamma)$ is the winding number around the origin. Since $\gamma$ is embedded, it follows from standard theory that $T(\gamma) \in \{-1,1\}$, and $W(\gamma) \in \{T(\gamma),0\}$ (depending on whether the origin is contained in $\gamma$ or not). It follows that $[d\theta][\gamma] \, \neq \, 0$, contradicting the zero-Maslov assumption.
	
	Finally, since the submanifold has been shown to be non-compact, the domain for the profile curve $M^1$ must be homeomorphic to $\mathbb{R}$. So, by the equivariance, $L \cong \mathbb{R}^n$ or $\mathbb{R}\times S^{n-1}$ (depending on whether the profile curve contains the origin or not respectively). Since both of these have first homology generated by at most one element, rationality of $L$ follows from the definition.
\end{proof}
\begin{figure}[h]
	\centering
	\includegraphics[scale=0.65]{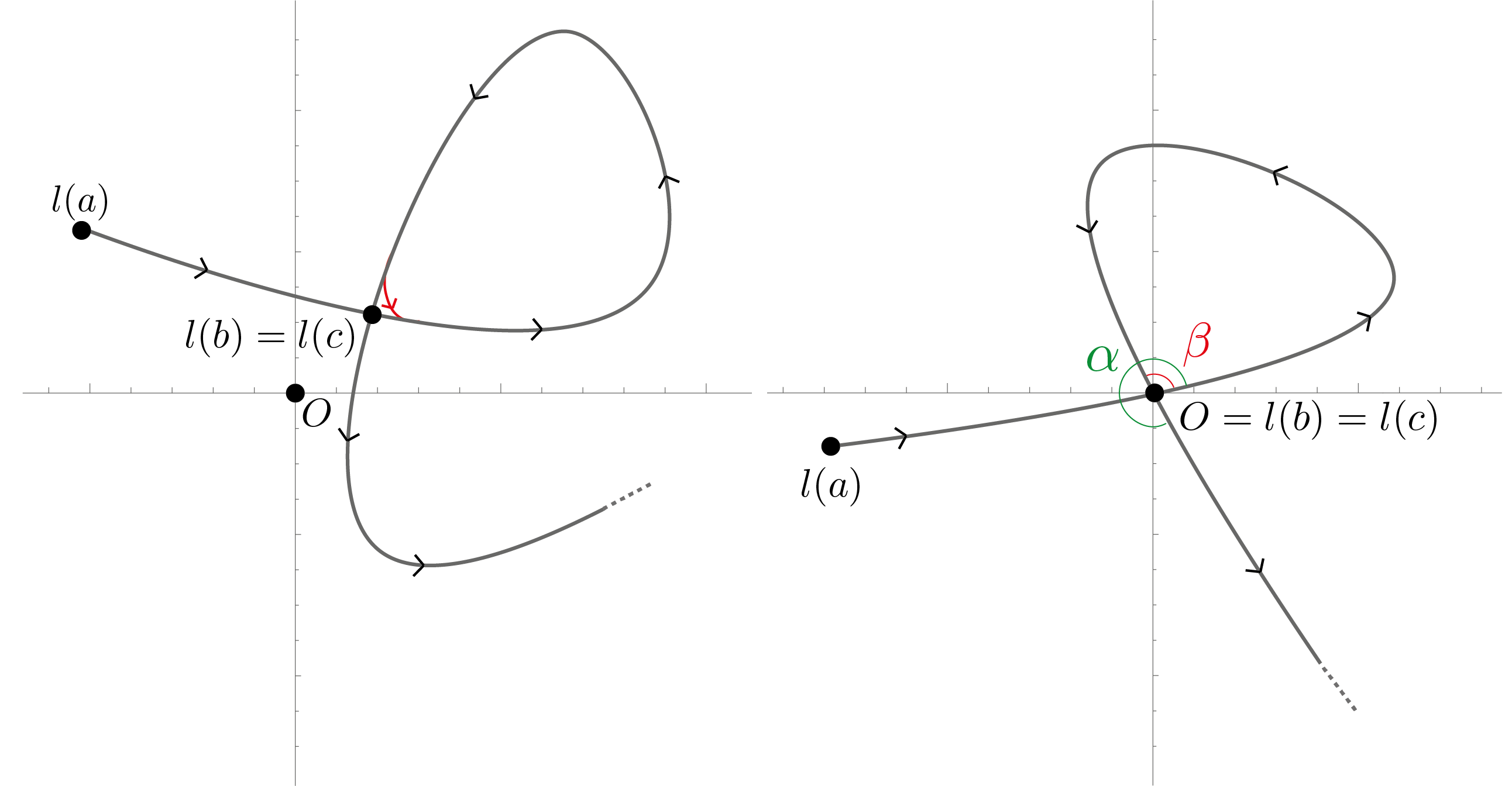}
	\caption[Diagram for the proof of Lemma \ref{lem-embedded}]{The proof of Lemma \ref{lem-embedded}. The case where the intersection is away from the origin is  depicted on the left, with the smoothing highlighted in red, and the case where the intersection is at the origin is depicted on the right.}
	\label{fig-embeddeddiagram}
\end{figure}
\begin{lemma}\label{lem-embedded}
	A connected, $O(n)$-equivariant, almost-calibrated Lagrangian submanifold $L$ of $\mathbb{C}^n$ must be embedded, non-compact and rational.
\end{lemma}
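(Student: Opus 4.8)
\emph{Plan.} The almost-calibrated condition is a strengthening of zero-Maslov (a Lagrangian angle confined to an interval of length less than $\pi$ is in particular single-valued), so once we know that $L$ is embedded, Lemma~\ref{lem-noncompact} supplies the remaining conclusions: $L$ is non-compact, rational, and diffeomorphic to $\mathbb{R}^n$ or to $\mathbb{R}\times S^{n-1}$. Hence the entire content of the statement is the embeddedness of $L$, which I would prove by contradiction working on the level of the profile curve.

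First I would record the (elementary but slightly fiddly) dictionary between embeddedness of $L$ and the profile curve $l=a+ib$. Using the $O(n)$-equivariance together with $L(s,\alpha)=-L(s,-\alpha)$, one checks that $L$ fails to be embedded precisely when one of the following occurs: (i) the component $\gamma$ of $l$ has a self-intersection away from the origin $O$; (ii) $\gamma$ passes through an antipodal pair $\pm p$ with $p\neq O$ (equivalently $\gamma$ meets $-\gamma$ away from $O$); or (iii) $l$ passes through $O$ at two or more distinct parameters. A single transverse passage of $l$ through $O$ is harmless, since there $L$ is, to leading order, an $n$-plane. The case $n=1$ is degenerate and I would dispose of it separately: then $\theta=\arg l'$, so the almost-calibrated condition forces the tangent direction of $l$ to stay in an open half-plane of directions, hence $l$ is a graph over a line and is embedded; and the two-component configuration cannot occur, since it would force the Lagrangian angle of $L$ to have range at least $\pi$. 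So assume $n\geq 2$.

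The heart of the matter is a turning-and-winding-number argument in the spirit of Lemma~\ref{lem-noncompact}. Suppose $L$ is not embedded, and consider one of the configurations (i)--(iii). From the self-intersection I would extract an \emph{embedded} closed loop $C\subset\mathbb{C}$: an innermost self-loop of $\gamma$ in case (i); the minimal sub-arc $A\subset\gamma$ joining $p$ to $-p$ together with its reflection $-A$ in case (ii) (a centrally-symmetric loop, which therefore encloses $O$); and an analogous loop in case (iii), where the degeneracy at $O$ forces one to use one-sided limits of $\arg l$ in place of the winding number. This extraction is exactly the ``smoothing'' pictured in Figure~\ref{fig-embeddeddiagram}. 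Writing $\theta=(n-1)\arg l+\arg l'$ for the single-valued Lagrangian angle along the profile curve, the two branches at the self-intersection correspond to two preimages $x_1,x_2$ of one point of $L$, so the almost-calibrated hypothesis gives $|\theta(x_2)-\theta(x_1)|<\pi$. On the other hand, integrating $d\theta$ from $x_1$ to $x_2$ along $C$ and applying the Hopf Umlaufsatz to the position-argument and to the tangent-argument separately yields an identity of the shape
\[
\theta(x_2)-\theta(x_1)\;=\;2\pi(n-1)\,W_O(C)\;+\;2\pi\,T(C)\;-\;\delta
\]
in case (i) (and $\theta(x_2)-\theta(x_1)=n\pi\,W_O(C)-\delta$ in case (ii)), where $W_O(C)$ and $T(C)$ are the winding and turning numbers of $C$ and $\delta$ is the sum of the exterior angles at its finitely many corners, each of absolute value at most $\pi$. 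Since $C$ is embedded one has $T(C)=\pm 1$ and $W_O(C)\in\{0,\pm 1\}$, with $W_O(C)=T(C)$ whenever $C$ encloses $O$ (always the case for the centrally-symmetric loops) and $W_O(C)=0$ otherwise. Feeding these in, a short check of the finitely many sign patterns shows that the right-hand side has absolute value at least $\pi$ for every $n\geq 2$, contradicting $|\theta(x_2)-\theta(x_1)|<\pi$. This contradiction proves $L$ embedded, and the rest of the Lemma follows from Lemma~\ref{lem-noncompact}.

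I expect the main obstacle to be the bookkeeping rather than any single estimate: verifying the dictionary between non-embeddedness of $L$ and the profile-curve configurations (in particular pinning down the local model of $L$ near $O$), checking that an innermost loop or an antipodal arc-plus-reflection really does become an embedded closed curve with the claimed corners, and tracking orientations carefully enough that the Umlaufsatz identities close with the correct signs. The origin case (iii) and possible tangential (non-transverse) self-intersections are where the most care is needed, but in every case the same engine — single-valuedness of $\theta$ bounding $|\theta(x_2)-\theta(x_1)|$ against a topological quantity of size at least $\pi$ — produces the contradiction.
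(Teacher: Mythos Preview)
Your argument is correct and rests on the same engine as the paper's: extract an embedded loop from the self-intersection of the profile curve and combine the identity $\theta=(n-1)\arg l+\arg l'$ with turning/winding numbers to force a variation of $\theta$ of size at least $\pi$, contradicting almost-calibratedness. The execution differs in one place. For a self-intersection away from the origin, the paper does not track the corner contribution $\delta$; instead it smooths the corner so that the Lagrangian angle of the smoothed arc stays in an arbitrarily small enlargement of the original range, thereby manufacturing a \emph{compact embedded almost-calibrated} equivariant Lagrangian and invoking Lemma~\ref{lem-noncompact} wholesale. Your direct Umlaufsatz-with-corners computation works just as well and has the virtue of being uniform across your cases (i)--(iii); the paper's smoothing trick buys a shorter argument with no sign bookkeeping. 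Your explicit enumeration of the antipodal configuration (ii) is also more careful than the paper, whose proof is phrased for a single connected component of the profile curve and leaves the $\gamma\cap(-\gamma)$ case to the reader; your centrally-symmetric loop $A\cup(-A)$ handles it cleanly. For a loop through the origin (your case (iii), the paper's second case) the two arguments essentially coincide.
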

\begin{proof}
	Consider a connected component of the profile curve, $l:M^1 \rightarrow \mathbb{C}$. Parametrising $M^1$ by the real numbers (or a quotient of $\mathbb{R}$ if $M^1$ is compact), choose $a \in M^1$ sufficiently small such that there exists $c > a$ with $l([a,c])$ not embedded. Taking $c$ as the infimum of its possible values, it follows that there exists $a\leq b\leq c$ such that $l(b)=l(c)$, and $l([b,c])$ is a closed loop.
	
	We consider two possibilities: either $l(b)=l(c)=O$, or $l(b),l(c)\neq O$ (see Figure \ref{fig-embeddeddiagram}). In the latter case, at the point $l(b)$ we may smooth the curve off to create a closed, embedded loop $\gamma$. By (\ref{eq-langle}), we can do this so that the Lagrangian angle of the smoothed section lies in the interval $(\theta(b)-\varepsilon,\theta(c)+\varepsilon)$ for any given $\varepsilon$. This produces a compact, embedded, almost-calibrated loop, which contradicts Lemma \ref{lem-noncompact}. In the other case, assume without loss of generality that the loop is oriented anticlockwise, and define
	\[\alpha \, := \, \int_{l([b,c])} \frac{\partial}{\partial s} \arg(\gamma'(s))\,ds,\quad\quad\quad \beta \, := \, \int_{l([b,c])} \frac{\p}{\p s}\arg(\gamma(s)) \, ds.\]
	Note that due to the orientation and embeddedness, $\alpha, \beta$ must be positive, and $\alpha - \beta = \pi$. It therefore follows that
	\begin{align*}
		\int_{[b,c]} d\theta \, &= \, \int_{l([b,c])} \frac{\partial}{\partial s} \arg(\gamma'(s))\,ds \, + \, (n-1)\int_{l([b,c])} \frac{\partial}{\partial s}\arg(\gamma(s)) \, ds\\
		&= \, \alpha + (n-1)\beta\\
		&= \, \pi + n\beta\\
		&> \, \pi,
	\end{align*}
	which is a contradiction to the almost-calibrated condition.
\end{proof}
We remark that there do exist non-embedded zero-Maslov equivariant curves, for example any equivariant Lagrangian with a `figure 8' profile curve through the origin, such as the Whitney sphere studied in \cite{Savas-Halilaj2018}.

\subsection{$O(n)$-Equivariant Special Lagrangians}

We would now like to characterise the $O(n)$-equivariant special Lagrangian cones in $\mathbb{C}^n$ (potential Type I blowup models) and the $O(n)$-equivariant smooth special Lagrangians (potential Type II blowup models).
\begin{lemma}\label{lem-speciallagcones}
	The only $O(n)$-equivariant special Lagrangian cones in $\mathbb{C}^n$ are unions of special Lagrangian planes, with profile curve consisting of unions of lines through the origin.
\end{lemma}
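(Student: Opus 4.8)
The plan is to reduce everything to the profile curve $l$, using the Lagrangian angle formula \eqref{eq-langle} together with a differential reformulation of the cone condition. Recall that a submanifold of $\mathbb{C}^n$ is a cone with vertex at the origin exactly when its position vector is everywhere tangent to it. For an $O(n)$-equivariant $L$, at the point $L(s,\alpha)=(a(s)\alpha,b(s)\alpha)$ the position vector is $(a(s)\alpha,b(s)\alpha)$ while the tangent space is spanned by $\partial_s L=(a'\alpha,b'\alpha)$ and the $\partial_{\sigma^i}L=(a\,\partial_{\sigma^i}\alpha,b\,\partial_{\sigma^i}\alpha)$; since $\alpha\perp\partial_{\sigma^i}\alpha$ on $S^{n-1}$, projecting the tangency relation onto the $\alpha$-direction forces $a=x(s)a'$ and $b=x(s)b'$ with the \emph{same} real coefficient $x(s)$, i.e. $l(s)=x(s)\,l'(s)$ with $x(s)\in\mathbb{R}$ (wherever $l\neq0$). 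Equivalently $\tfrac{d}{ds}\arg l(s)=\operatorname{Im}\!\big(l'(s)/l(s)\big)=0$, so $\arg l$ is locally constant.

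Thus each connected arc of the profile curve lies on a line through the origin, and since $\alpha$ ranges over all of $S^{n-1}$ (so a single ray already sweeps out a whole linear $n$-plane) the submanifold $L$ is, up to the origin, a union of the Lagrangian $n$-planes $P_\phi=\{(\cos\phi\,v,\sin\phi\,v):v\in\mathbb{R}^n\}$, one for each line $\{t e^{i\phi}\}$ occurring in the profile curve; Lagrangianity of each $P_\phi$ is Lemma \ref{thm-s1lag}, and being a plane each $P_\phi$ is special Lagrangian with constant angle $n\phi$ by \eqref{eqn-laganglecalc}. Finally, invoking the hypothesis that $L$ itself is special Lagrangian with angle $\overline\theta$, formula \eqref{eq-langle} gives $n\phi\equiv\overline\theta\pmod\pi$ for every constituent plane, so they all share the Lagrangian angle $\overline\theta$ and the union is honestly a special Lagrangian cone; conversely any such union of equivariant special Lagrangian planes is clearly an $O(n)$-equivariant special Lagrangian cone, so the description is sharp. (In the connected case two non-collinear lines through the origin would create a corner, so $L$ is then a single special Lagrangian plane.)

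The substantive step — and the only one I expect to need spelling out — is the first one: verifying that the cone condition for an equivariant $L$ is exactly $l\parallel l'$, which uses the orthogonality $\alpha\perp\partial_{\sigma^i}\alpha$ and the fact that the $\partial_s$- and $\partial_{\sigma^i}$-directions are ``independent'' in the $\alpha$ versus $\alpha^{\perp}$ splitting. After that the argument is a one-line substitution into \eqref{eq-langle}. The remaining care is purely conventional: tracking the mod-$\pi$ ambiguity in \eqref{eqn-laganglecalc}–\eqref{eq-langle} coming from orientation, so that ``$n\phi\equiv\overline\theta\pmod\pi$'' is stated correctly (with a fixed orientation it is an equality mod $2\pi$ after possibly reversing the orientation of $l$), and handling points where the profile curve meets the origin, where the position vector vanishes and imposes no constraint.
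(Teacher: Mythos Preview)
Your argument is correct and follows the same route as the paper: reduce to the profile curve and show it must consist of lines through the origin, then read off the Lagrangian angle via \eqref{eq-langle}. The paper's proof is a one-liner (``the only special Lagrangian cones in $\mathbb{C}$ are lines through the origin, so this follows from the equivariance and \eqref{eq-langle}''), whereas you actually carry out the tangency computation that justifies why the profile curve of an equivariant cone is a cone in $\mathbb{C}$; this is the only substantive addition, and it is welcome detail rather than a different idea.
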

\begin{proof}
	The only special Lagrangian cones in $\mathbb{C}$ are lines through the origin, so this follows from the equivariance and (\ref{eq-langle}).
\end{proof}

\begin{lemma}\label{lem-speciallag}
	The only $O(n)$-equivariant surfaces in $\mathbb{C}^n$ with constant Lagrangian angle of $\overline{\theta}$ are those with profile curves given by either lines through the origin, or the parametrisation
	\begin{align*}
		r(\alpha) \, &= \, \frac{B}{\sqrt[n]{\sin(\overline{\theta}-n\alpha)}}
	\end{align*}
	for $B \geq 0$.
	In the latter case, these are known as \textbf{Lawlor necks}.
\end{lemma}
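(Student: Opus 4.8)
The plan is to convert the constancy of the Lagrangian angle into an ODE for the profile curve in polar coordinates and integrate it. By (\ref{eq-langle}) the Lagrangian angle $\theta(s,\alpha) = (n-1)\arg(l(s)) + \arg(l'(s))$ depends only on the profile curve $l$, so it suffices to classify the planar curves $l\subset\mathbb{C}$ with $(n-1)\arg(l) + \arg(l') \equiv \overline\theta$.

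I would work locally and distinguish two cases according to whether the tangent direction of $l$ is radial. Away from the origin write $l = re^{i\phi}$. If $\phi$ is locally constant the curve is a radial segment of a line through the origin, and (\ref{eq-langle}) gives $\theta \equiv n\phi \bmod \pi$; conversely such a line is admissible exactly when $\overline\theta \equiv n\phi \bmod \pi$. Otherwise $\phi' \neq 0$ on an interval, so $l$ may be reparametrized locally by its polar angle, $r = r(\phi)$. Then $\tfrac{dl}{d\phi} = (r' + ir)e^{i\phi}$, hence $\arg(l') = \phi + \arg(r' + ir)$ and the condition $\theta\equiv\overline\theta$ reads $\arg(r' + ir) = \overline\theta - n\phi$. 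Since $r > 0$, the number $r' + ir$ has positive imaginary part, which simultaneously forces $\sin(\overline\theta - n\phi) > 0$ and yields the separable ODE $\tfrac{r'}{r} = \cot(\overline\theta - n\phi)$.

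Integrating $\tfrac{d}{d\phi}\log r = \cot(\overline\theta - n\phi)$ gives $\log r = -\tfrac1n\log\sin(\overline\theta - n\phi) + C$, i.e. the claimed parametrization $r = B\big(\sin(\overline\theta - n\phi)\big)^{-1/n}$ with $B = e^C > 0$ (and $B = 0$ the degenerate point), defined on the maximal interval where $\sin(\overline\theta - n\phi) > 0$ --- an interval of length $\tfrac{\pi}{n}$, matching Theorem \ref{theorem1}. Finally one checks the two local forms cannot be glued into a new connected example: a Lawlor arc has $r \geq B > 0$ and tangent direction $\phi + \arg(r'+ir)$ which is never radial (imaginary part $r > 0$), whereas a radial segment has radial tangent and may reach the origin, so no $C^1$ junction exists; hence each component of the profile curve is entirely a line through the origin or entirely a Lawlor arc. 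Conversely, substituting these $r(\phi)$ back into (\ref{eq-langle}) confirms the Lagrangian angle is $\overline\theta$.

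The genuinely delicate points are only bookkeeping: justifying the polar reparametrization requires the preliminary split into the radial and non-radial cases, and one must track the branch of $\arg$ and the absolute value in $\int\cot$ --- the constraint $\sin(\overline\theta - n\phi) > 0$, which is forced by $r > 0$, is precisely what removes the absolute value and fixes the domain, so it deserves emphasis rather than being absorbed silently. The integration and the backward verification are routine.
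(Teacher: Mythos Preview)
Your argument is correct and follows essentially the same route as the paper: parametrise the non-radial portion of the profile curve by polar angle, use (\ref{eq-langle}) to obtain the separable ODE $r'/r = \cot(\overline\theta - n\alpha)$, and integrate. Your treatment is in fact somewhat more careful than the paper's, which handles the case split, the positivity of $\sin(\overline\theta - n\alpha)$, and the gluing issue more tersely.
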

\begin{proof}
	If a connected component $\gamma$ of the profile curve is not a line through the origin, then there is an open interval on which it may be parametrised by angle, i.e. as $\gamma(\alpha) \, = \, r(\alpha)e^{i\alpha}.$ Then on this interval,
	\begin{align*}
		\dot{\gamma}\, = \, (\dot{r} + ir)e^{i\alpha}\quad	\implies \quad \overline{\theta} \, = \, n\alpha + \cot^{-1}(\tfrac{\dot{r}}{r}).
	\end{align*}
	by equation (\ref{eq-langle}). Integrating this gives the expression in the statement, and since this expression is valid until the value of $r$ diverges to $\infty$, the entire connected component may be parametrised in this way.
\end{proof}

\subsection{Limits of $O(n)$-Equivariant Submanifolds}\label{sub-s1lim}

When considering Type I and Type II blowups, we will be trying to understand the limit of sequences of submanifolds, $L^i$. Since they are translations and dilations of equivariant submanifolds, they will have rotational symmetry, but the centres of rotation $x_i$ may not be the origin (though without loss of generality, we will be able to assume that $x_i \in \mathbb{C}{\times}\{0\}^{n-1}$). There are two possible behaviours: either $|x_i|$ stays bounded, or diverges to infinity. These two cases will correspond to equivariance and translation invariance respectively for the limiting object.

\newcommand{\measurerestr}{%
	\,\raisebox{-.127ex}{\reflectbox{\rotatebox[origin=br]{-90}{$\lnot$}}}\,%
}

We formalise and prove these statements. Consider for the rest of this section a sequence $L^i$ of submanifolds of $\mathbb{C}^n$, which converge in the sense of Radon measures to $\mu_\infty \, := \, m\,\mathcal{H}^n\measurerestr L^{\infty}$, where $m$ is a multiplicity function and $L^\infty$ is the supporting set. Explicitly, for all $\phi \in C^{\infty}_c(\mathbb{C}^n)$, denoting the underlying Radon measures of the $L^i$ by $\mu_i$,
\[ \mu_i(\phi) = \int_{L^i} \phi \, d\mathcal{H}^n \, \rightarrow \, \mu_\infty(\phi) = \int_{L^\infty} m \phi \, d\mathcal{H}^n. \]
Assume that $L^i$ is a translation of an $O(n)$-invariant submanifold by $x_i \in \mathbb{C}{\times}\{0\}^{n-1}$, therefore invariant under the rotation mappings
\begin{align}
	R_{x_i} \, &: \, S^{n-2}\times\mathbb{R}\times\mathbb{C}^n \rightarrow \mathbb{C}^n,\notag\\
	R_{x_i} (\alpha, \lambda, y) \, &:= \, 
	\mleft(
	\begin{array}{c|c}
		\begin{matrix}
			\cos \left( \frac{\lambda}{|y-x_i|}\right) && -\sin \left( \frac{\lambda}{|y-x_i|}\right)\\
			\sin \left( \frac{\lambda}{|y-x_i|}\right) && \cos \left( \frac{\lambda}{|y-x_i|}\right)
		\end{matrix} & 0 \\
		\hline
		0 & Id
	\end{array}
	\mright)(y - x_i) + x_i, \label{eq-defRrotation}
\end{align}
where $\alpha \in S^{n-1}\cap \left(\{0\}{\times}\mathbb{R}^{n-1}\right) \cong S^{n-2}$ is an equatorial element of $S^{n-1}$ (the direction of rotation), $\lambda \in \mathbb{R}$ is a distance factor, and for the matrix we have used an orthogonal basis of $\mathbb{R}^n$ starting with $e_1$ and $\alpha$. Note that keeping $\alpha$ constant and varying $\lambda$ creates a 1-parameter family of rotations that corresponds to the rotations of the $S^1 \subset S^{n-1}$ containing $\alpha$ and $e_1$. Define also the translation map

\begin{equation}
	T_{x_i} (\alpha, \lambda, y) \, := y - \lambda 
	\mleft(
	\begin{array}{c|c}
		\begin{matrix}
			0 && -1\\
			1 && 0  
		\end{matrix} & 0 \\
		\hline
		0 & 0
	\end{array}
	\mright)
	\frac{x_i}{|x_i|}, \label{eq-defTtranslation}
\end{equation}
using the same basis.
\begin{lemma}\label{lem-s1convergence}
	If $|x_i| \rightarrow \infty$, then we may pass to a subsequence such that as $i \rightarrow \infty$, 
	\[ \tfrac{x_i}{|x_i|} \rightarrow v, \quad \quad R_{x_i} \rightarrow T_{v} \]
	in $C^{\infty}_{loc}$. 
	If $|x_i|$ remains bounded, then we may pass to a subsequence such that as $i \rightarrow \infty$,
	\[ x_i \rightarrow x, \quad \quad R_{x_i} \rightarrow R_{x} \]
	in $C^{\infty}$.
\end{lemma}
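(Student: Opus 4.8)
The plan is to prove each of the two cases separately, in both cases reducing the convergence of the rotation maps $R_{x_i}$ to elementary convergence facts about the scalar quantities appearing in the matrix in (\ref{eq-defRrotation}), together with a compactness argument for the parameters.

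First I would treat the bounded case, which is the easier of the two. Since $|x_i|$ is bounded, by Bolzano--Weierstrass we may pass to a subsequence with $x_i \to x$ for some $x \in \mathbb{C}{\times}\{0\}^{n-1}$. The map $R_{x_i}(\alpha,\lambda,y)$ depends on $x_i$ only through the appearance of $x_i$ in the translations $y - x_i$ and $+x_i$ and through the scalar $\frac{\lambda}{|y-x_i|}$ inside the trigonometric entries. On any fixed compact set $K$ in $(\alpha,\lambda,y)$-space that avoids the (closed) locus $\{y = x\}$ — and hence, for $i$ large, avoids $\{y = x_i\}$ too — the function $(\alpha,\lambda,y)\mapsto \frac{\lambda}{|y-x_i|}$ converges in $C^\infty(K)$ to $\frac{\lambda}{|y-x|}$, since it is a smooth function of $x_i$ there and $x_i \to x$. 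Composing with $\cos$, $\sin$ and the affine maps $y\mapsto y-x_i$, $y\mapsto y+x_i$ (whose coefficients also converge), we conclude $R_{x_i}\to R_x$ in $C^\infty$ away from $\{y=x\}$, which is precisely $C^\infty$ convergence in the relevant sense (the maps are only defined for $y\neq x_i$ in any case).

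Next the unbounded case. Write $v_i := \frac{x_i}{|x_i|}$, which lies on the unit sphere, so after passing to a subsequence $v_i \to v$. The key computation is the behaviour of the rotation as $|x_i|\to\infty$: for $y$ in a fixed compact set, $|y - x_i| = |x_i|\bigl(1 + O(|x_i|^{-1})\bigr)$, so the angle $\frac{\lambda}{|y-x_i|} \to 0$. Expanding the rotation matrix to first order, $\cos\frac{\lambda}{|y-x_i|} \to 1$ and $\sin\frac{\lambda}{|y-x_i|} = \frac{\lambda}{|x_i|} + O(|x_i|^{-2})$, and the overall map is $R_{x_i}(\alpha,\lambda,y) = (y - x_i) + x_i$ plus the off-diagonal correction, which to leading order acts on $y - x_i \approx -x_i$ and produces the term $-\lambda\bigl(\begin{smallmatrix}0 & -1\\ 1 & 0\end{smallmatrix}\bigr)\oplus 0$ applied to $\frac{x_i}{|x_i|}$; the errors are $O(|x_i|^{-1})$ uniformly on compacta and likewise for all derivatives in $(\alpha,\lambda,y)$. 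Since $v_i\to v$, this limit is exactly $T_v$ as defined in (\ref{eq-defTtranslation}). Making this uniform-in-derivatives estimate precise — tracking that every spatial and parameter derivative of the remainder is also $O(|x_i|^{-1})$ on compact sets — is the main technical point, but it is a routine Taylor-expansion bookkeeping exercise once one notes that all the $x_i$-dependence enters through the single smooth scalar $\frac{\lambda}{|y-x_i|}$ and the unit vector $v_i$, both of which have controlled derivatives. I do not expect any genuine obstacle here; the only care needed is to fix, once and for all, an orthonormal frame adapted to $v$ (rather than to the moving $v_i$) so that the matrix expressions in (\ref{eq-defRrotation}) and (\ref{eq-defTtranslation}) are compared in a single coordinate system.
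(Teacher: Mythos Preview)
Your proposal is correct and follows essentially the same approach as the paper: extract a convergent subsequence by compactness, and in the unbounded case expand the rotation matrix to first order in the small angle $\frac{\lambda}{|y-x_i|}$ to identify the limit as $T_v$. The paper makes this slightly more concrete by writing out the explicit matrix expression for $R_{x_i}-T_{x_i}$ and showing each block tends to zero (then using $T_{x_i}\to T_v$), whereas you phrase the same computation as a Taylor expansion with $O(|x_i|^{-1})$ remainders; your remark about fixing a frame is unnecessary here since the setup already has $x_i\in\mathbb{C}\times\{0\}^{n-1}$, so the basis $\{e_1,\alpha,\ldots\}$ is fixed from the outset.
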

\begin{proof}
	This is clear in the $|x_i|$ bounded case. If $|x_i| \rightarrow \infty$, then first pass to a subsequence such that $\tfrac{x_i}{|x_i|} \rightarrow v$. Fixing a compact region of the domain $U\subset S^{n-2}{\times}\mathbb{R}{\times}\mathbb{C}^n$ and taking $(\alpha, \lambda, y) \in U$,
	\begin{align*}
		\frac{\lambda}{|y-x_i|} \, &\rightarrow \, 0\\
		\implies R_{x_i}(\alpha, \lambda, y) &- T_{x_i}(\alpha, \lambda, y) \, = 
		\mleft( 
		\begin{array}{c|c}
			\begin{matrix}
				\cos \big( \frac{\lambda}{|y-x_i|}\big) - 1 && -\sin \big( \frac{\lambda}{|y-x_i|}\big)\\
				\sin \big( \frac{\lambda}{|y-x_i|}\big) && \cos \big( \frac{\lambda}{|y-x_i|}\big) - 1
			\end{matrix} & 0 \\ \hline
			0 & 0
		\end{array}
		\mright) y \, + \, \\ &\mleft( 
		\begin{array}{c|c}
			\begin{matrix}
				1 - \cos \big( \frac{\lambda}{|y-x_i|}\big) && \sin \big( \frac{\lambda}{|y-x_i|}\big) - \frac{\lambda}{|x_i|}\\
				-\sin \big( \frac{\lambda}{|y-x_i|}\big) + \frac{\lambda}{|x_i|} && 1 - \cos \big( \frac{\lambda}{|y-x_i|}\big)
			\end{matrix} & 0 \\ \hline
			0 & 0
		\end{array}
		\mright)x_i \longrightarrow 0,
	\end{align*}
	where all convergences are uniform in $U$. Similarly, the derivatives converge uniformly. Since also
	\[|T_{x_i}(\alpha, \lambda, y) - T_{v}(\alpha, \lambda, y)| \, \rightarrow 0 \] in $C^{\infty}$, the result follows.
\end{proof}
What kind of invariance can we deduce for $\mu_\infty$ from this lemma? It is immediate that in both cases we can extract a measure-theoretic invariance. For example in the $|x_i|$ unbounded case, taking $\phi \in C^{\infty}_c$:
\begin{align*}
	\mu_\infty (\phi \circ T_v(\alpha, \lambda, \cdot)) \, &= \, \lim_{i \rightarrow \infty} \mu_i(\phi \circ R_{x_i}(\alpha, \lambda, \cdot)) \ \, =  \, \lim_{i \rightarrow \infty} \mu_i(\phi ) \, = \, \mu_{\infty} (\phi ).
\end{align*}

It follows that if $L^\infty$ is a cone smooth away from the origin, or indeed a smooth manifold, then we have invariance of the supporting set, as well as of the multiplicity function. 

One of the most useful aspects of $O(n)$-equivariant smooth manifolds is that they are characterised by the intersection with $\mathbb{C} {\times} \{0\}^{n-1}$. In particular, it is convenient to replace the $\mathcal{H}^n$ (Hausdorff) measure of our submanifolds with the $\mathcal{H}^1$ measure of their intersection with $\mathbb{C} {\times} \{0\}^{n-1}$. We wish to do this also with our limit $\mu_\infty$, in the case where $L^\infty$ is a cone. In place of the Hausdorff measures $\mathcal{H}^n$ and $\mathcal{H}^1$, we work with the limiting measure $\mu_\infty$, and the limiting measure $\widetilde{\mu}_\infty$ of the profile curve $l^\infty = L^\infty \cap (\mathbb{C} {\times} \{0\}^{n-1})$ respectively: 
\[ \mu_\infty(A) \, = \, \int_{L^\infty \cap A} m \, \mathcal{H}^n, \quad\quad\quad \widetilde\mu_\infty(A) \, = \, \int_{l^\infty \cap A} m \, \mathcal{H}^1. \]
From now on, we assume that $\frac{x_i}{|x_i|} \rightarrow e_1 = (1,0,\ldots,0)$, since this may be achieved by passing to a subsequence and applying a rotation.
\begin{lemma}\label{lem-s1convergence1}
	Assume that the $L^i$ are as above, converging to $\mu_\infty$ as Radon measures. If $|x_i| \rightarrow \infty$ and $\frac{x_i}{|x_i|} \rightarrow e_1$, then $\mu_\infty$ is supported on $ \mathbb{C}{\times}\mathbb{R}^{n-1} \subset \mathbb{C} {\times} \mathbb{C}^{n-1}$. If instead $x_i$ limits to $x$, then $\mu_\infty$ is supported on $\{R_{x}(\alpha, \lambda,z): \alpha \in S^{n-2}, \, \lambda \in \mathbb{R}, \, z \in \mathbb{C}{\times}\{0\}^{n-1}\}$. 
\end{lemma}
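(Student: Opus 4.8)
The plan is to trace the support of $\mu_\infty$ through the measure-theoretic invariances established in Lemma \ref{lem-s1convergence} and the paragraph following it. In both cases we know the $L^i$ are invariant under the rotation maps $R_{x_i}$, and the preceding discussion shows that $\mu_\infty$ inherits the corresponding invariance in the limit: invariance under $T_v$ (here $T_{e_1}$) when $|x_i|\to\infty$, and under $R_x$ when $x_i\to x$. So the task is really to identify the largest set that can carry a measure with the stated invariance, together with the initial constraint that each $L^i$ (hence $L^\infty$) already lies in the appropriate rotated copy of the profile plane swept around.

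First, in the bounded case $x_i\to x$: each $L^i$, being a translate by $x_i$ of an $O(n)$-equivariant set, is a union of $R_{x_i}$-orbits of points in the translated profile plane $x_i + (\mathbb{C}\times\{0\}^{n-1})$. Since the profile plane is $2$-dimensional and the orbit directions sweep out the remaining $S^{n-2}$-worth of coordinates, every point of $L^i$ lies in $\{R_{x_i}(\alpha,\lambda,z) : \alpha\in S^{n-2},\ \lambda\in\mathbb{R},\ z\in x_i+(\mathbb{C}\times\{0\}^{n-1})\}$. Now I would pass to the limit: by Lemma \ref{lem-s1convergence}, $R_{x_i}\to R_x$ in $C^\infty$ and $x_i\to x$, so this family of sets converges (locally, in Hausdorff distance on compacta) to $\{R_x(\alpha,\lambda,z) : \alpha\in S^{n-2},\ \lambda\in\mathbb{R},\ z\in\mathbb{C}\times\{0\}^{n-1}\}$; since Radon-measure convergence forces $\operatorname{supp}\mu_\infty$ to lie in the (lim sup of the) supports of the $\mu_i$, the claimed containment follows. (One should note the harmless normalisation $x_i\in\mathbb{C}\times\{0\}^{n-1}$ from Section \ref{sub-s1lim}, which makes $z\mapsto z$, $\lambda$, $\alpha$ genuinely parametrise the orbit through $x$.)

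Second, in the unbounded case $|x_i|\to\infty$ with $x_i/|x_i|\to e_1$: the cleanest route is to fix a compact $K\subset\mathbb{C}^n$ and observe that for $i$ large the centre of rotation $x_i$ is far from $K$, so on $K$ the orbits $R_{x_i}$ look like the translations $T_{x_i}$, which by Lemma \ref{lem-s1convergence} converge to $T_{e_1}$; the direction $e_1$ being a \emph{complex} coordinate direction, $T_{e_1}$ translates in the $\mathbb{C}$-factor and fixes the last $n-1$ complex coordinates. More precisely, each point of $L^i\cap K$ differs from a point of the profile plane $x_i + (\mathbb{C}\times\{0\}^{n-1})$ by a rotation in a $2$-plane spanned by $e_1$ and some equatorial $\alpha$; as $i\to\infty$ the relevant rotation angle $\lambda/|y-x_i|\to 0$ uniformly on $K$, so the $\mathbb{R}^{n-1}$-part of such a point (the imaginary directions $e_2,\dots,e_n$ of the second $\mathbb{R}^n$ factor, i.e. the $\{0\}\times\mathbb{R}^{n-1}$ inside $\mathbb{C}\times\mathbb{C}^{n-1}$) is forced to $0$ in the limit. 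Hence $\operatorname{supp}\mu_\infty\cap K\subset \mathbb{C}\times\mathbb{R}^{n-1}$, and letting $K$ exhaust $\mathbb{C}^n$ gives the claim.

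The main obstacle is the bookkeeping in the unbounded case: one must be careful that the point $y\in L^i\cap K$ whose orbit one is tracking is itself bounded while its distance to $x_i$ blows up, so that both the small-angle estimate $\lambda/|y-x_i|\to 0$ and the comparison $R_{x_i}-T_{x_i}\to 0$ (exactly the estimate carried out in the proof of Lemma \ref{lem-s1convergence}) apply uniformly — and then to convert this pointwise-on-$L^i$ statement into a statement about $\operatorname{supp}\mu_\infty$, using only that Radon convergence implies $\operatorname{supp}\mu_\infty\subset\liminf\operatorname{supp}\mu_i$ (closure of points approximable by points of the $L^i$). The bounded case is essentially immediate once one writes down the orbit parametrisation and applies the $C^\infty$ convergence $R_{x_i}\to R_x$.
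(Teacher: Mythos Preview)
Your proposal is correct and follows essentially the same route as the paper: observe that each $L^i$ is contained in the $R_{x_i}$-orbit of the profile plane, use the $C^\infty_{\mathrm{loc}}$ convergence $R_{x_i}\to T_{e_1}$ (resp.\ $R_{x_i}\to R_x$) from Lemma~\ref{lem-s1convergence} to see these orbit-sets converge to the stated target, and conclude via the semicontinuity of supports under Radon convergence. The paper phrases the last step as ``for any open $U$ disjoint from the target, the $L^i$ are eventually disjoint from $U$, hence $\mu_\infty(U)=0$'', which is exactly your Hausdorff/support argument unpacked.

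Two small slips worth cleaning up: your description of $T_{e_1}$ as ``translating in the $\mathbb{C}$-factor'' is not right---in the paper's conventions $T_{e_1}(\alpha,\lambda,\cdot)$ translates in the $\alpha$-direction, i.e.\ in the \emph{real} parts of the last $n-1$ coordinates, which is precisely why the orbit of $\mathbb{C}\times\{0\}^{n-1}$ under all $(\alpha,\lambda)$ is $\mathbb{C}\times\mathbb{R}^{n-1}$ (your final conclusion is nonetheless correct). Also, the support inclusion under Radon convergence is into the closed $\limsup$ of supports, not the $\liminf$ as you wrote at the end.
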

\begin{proof}
	Note that $L^i$ is supported on \[\{R_{x_i}(\alpha, \lambda,z): \alpha \in S^{n-2}, \, \lambda \in \mathbb{R},\, z \in \mathbb{C}{\times}\{0\}^{n-1} \},\] since the profile curve determines the entire submanifold. Therefore in the $|x_i|$ unbounded case, for any open set $U$ disjoint from $\mathbb{C}{\times}\mathbb{R}^{n-1} \, = \, \{T_{1}(\alpha, \lambda, z): \alpha \in S^{n-2}, \, \lambda \in \mathbb{R}, \, z \in \mathbb{C}{\times}\{0\}^{n-1} \}$, the submanifolds $L^i$ are eventually disjoint from $U$, by the convergence of $R_{x_i}$ to $T_{1}$. It follows that $L^{\infty}$ is supported on $U^c$, and so the result follows since $U$ was arbitrary. An identical argument works if $|x_i|$ is bounded.
\end{proof}
\begin{lemma}\label{lem-s1convergence2}
	Assume that $L^i$ are as above, converging to $\mu_\infty$ as Radon measures, $|x_i| \rightarrow \infty$ and $\frac{x_i}{|x_i|} \rightarrow e_1$. Assume $L^\infty$ is a cone, smooth away from the origin, with profile curve $l^\infty$ in $\mathbb{C}$. Then, denoting the ball of radius $\delta$ in $\mathbb{C}$ by $B_{\delta}^{\mathbb{C}}$, the surface area and enclosed volume of the unit sphere $S^{n-1} \subset \mathbb{R}^{n}$ by $\omega_{n-1}$ and $V_{n}$ respectively,
	\[\frac{\widetilde{\mu}_\infty(B_{\delta}^{\mathbb{C}})}{2\delta} \, = \, \frac{\mu_\infty(B_{\delta})}{\delta^n V_n}. \]
\end{lemma}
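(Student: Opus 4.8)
The plan is to use the translation invariance of $\mu_\infty$ recorded just above to pin $L^\infty$ down to a rigid shape, and then to read both sides of the identity directly off that shape. First I would note that the invariance $\mu_\infty(\phi\circ T_1(\alpha,\lambda,\cdot))=\mu_\infty(\phi)$, valid for all $\alpha\in S^{n-2}$, $\lambda\in\mathbb{R}$ and $\phi\in C_c^\infty(\mathbb{C}^n)$, says — since each $T_1(\alpha,\lambda,\cdot)$ is a translation — that $\mu_\infty$ is preserved by the group these translations generate; by the description of $\mathbb{C}\times\mathbb{R}^{n-1}$ used in the proof of Lemma~\ref{lem-s1convergence1}, that group is precisely translation by an arbitrary vector of the $(n-1)$-plane $\Pi:=\{0\}\times\mathbb{R}^{n-1}$, where $\mathbb{C}\times\mathbb{R}^{n-1}=(\mathbb{C}\times\{0\}^{n-1})\oplus\Pi$ is an orthogonal direct sum. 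Since $L^\infty$ is smooth away from the origin, the discussion preceding the lemma upgrades this measure invariance to $\Pi$-invariance of both the supporting set $L^\infty$ and the multiplicity function $m$.

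Next I would extract the structure of $L^\infty$. Setting $S:=L^\infty\cap\Pi^\perp$, the $\Pi$-invariance gives $L^\infty=S\oplus\Pi$; and since $L^\infty\subset\mathbb{C}\times\mathbb{R}^{n-1}$ by Lemma~\ref{lem-s1convergence1}, we get $S\subset(\mathbb{C}\times\mathbb{R}^{n-1})\cap\Pi^\perp=\mathbb{C}\times\{0\}^{n-1}$. As orthogonal projection to $\Pi^\perp$ commutes with dilations, $S$ is a $1$-dimensional cone in the plane $\mathbb{C}\times\{0\}^{n-1}\cong\mathbb{C}$ that is smooth away from the origin, hence a union of rays through the origin; and since $\mu_\infty(B_1)<\infty$, the union is finite, say $S=\bigcup_{j=1}^N\rho_j$, with $m$ taking a constant value $m_j$ on each $\rho_j\oplus\Pi$. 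Thus $L^\infty=\bigcup_{j=1}^N(\rho_j\oplus\Pi)$ is a finite union of closed half-$n$-planes through the origin. The key observation is that $\Pi$ is orthogonal to $\mathbb{C}\times\{0\}^{n-1}$ and meets it only at the origin, while each $\rho_j\subset\mathbb{C}\times\{0\}^{n-1}$; hence the profile curve is exactly the cross-section, $l^\infty=L^\infty\cap(\mathbb{C}\times\{0\}^{n-1})=\bigcup_j\rho_j=S$, carrying multiplicity $m_j$ on $\rho_j$.

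Finally, the computation is routine. For each ray $\mathcal{H}^1(\rho_j\cap B_{\delta}^{\mathbb{C}})=\delta$, so $\widetilde{\mu}_\infty(B_{\delta}^{\mathbb{C}})=\delta\sum_j m_j$; and writing the half-$n$-plane $\rho_j\oplus\Pi$ in terms of the ray coordinate together with $\Pi\cong\mathbb{R}^{n-1}$, Fubini and the standard slicing identity for the Euclidean $n$-ball (using $V_n=\omega_{n-1}/n$) give $\mathcal{H}^n((\rho_j\oplus\Pi)\cap B_\delta)=\tfrac12 V_n\delta^n$, hence $\mu_\infty(B_\delta)=\tfrac12 V_n\delta^n\sum_j m_j$. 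Dividing through yields $\frac{\widetilde{\mu}_\infty(B_{\delta}^{\mathbb{C}})}{2\delta}=\tfrac12\sum_j m_j=\frac{\mu_\infty(B_\delta)}{\delta^n V_n}$. I expect the only delicate point to be the first step: identifying the translation subgroup $\Pi$ correctly and recognising that its orthogonal complement inside the support plane $\mathbb{C}\times\mathbb{R}^{n-1}$ is exactly the plane $\mathbb{C}\times\{0\}^{n-1}$ in which the profile curve lives — this is what forces $l^\infty$ to coincide with the cross-section of $L^\infty$ and makes the two normalisations $2\delta$ and $\delta^n V_n$ consistent. Everything after that is linear algebra and a one-line integral, so no real obstacle remains.
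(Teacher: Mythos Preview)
Your argument is correct, but it proceeds differently from the paper. You first pin down the structure of $L^\infty$ completely: using the $\Pi$-translation invariance together with the cone hypothesis you identify $L^\infty$ as a finite union of half-$n$-planes $\rho_j\oplus\Pi$ with constant multiplicities $m_j$, and then compute both sides of the identity by elementary geometry (a ray meets $B_\delta^{\mathbb{C}}$ in a segment of length $\delta$, a half-$n$-plane meets $B_\delta$ in half an $n$-ball). The paper does not classify $L^\infty$; it argues directly with the coarea formula and the cone scaling $\widetilde\mu_\infty(B_r^{\mathbb{C}})=(r/\delta)\,\widetilde\mu_\infty(B_\delta^{\mathbb{C}})$, slicing $B_\delta$ by the $\Pi$-coordinate and reducing to the integral $\int_{B_\delta^{n-1}}\sqrt{\delta^2-|\alpha|^2}\,d\alpha=\tfrac{\delta^{n+1}V_n}{2}$.

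Your route is more concrete and makes the geometric picture explicit, at the cost of an extra classification step; the paper's coarea argument is shorter and does not need to enumerate components. One small remark: the finiteness of the number of rays is not actually needed for your computation, since the sums $\sum_j m_j$ appear identically on both sides regardless; and your implicit use of ``$m_j$ constant along $\rho_j$'' is the same cone-scaling input that the paper uses when pulling $\widetilde\mu_\infty(B_\delta^{\mathbb{C}})$ out of the integral, so neither proof is assuming more than the other on that point.
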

\begin{proof}
	By Lemmas \ref{lem-s1convergence} and \ref{lem-s1convergence1}, $L^{\infty}$ and $m$ are invariant under $T_1$, and supported on $\mathbb{C}{\times}\mathbb{R}^{n-1}$. Remembering that $L^\infty$ is a cone, and applying the coarea formula:
	\begin{align*}
		\mu_\infty(B_{\delta}^{\mathbb{C}^n}){ \delta^n V_n} \, &= \, \frac{1}{\delta^n V_n} \int_{B^{n-1}_{\delta}\subset\{0\}{\times}\mathbb{R}^{n-1}} \widetilde\mu_\infty \left(B_{\sqrt{\delta^2-|\alpha|^2}}^\mathbb{C} \right) d\alpha \\
		&= \, \frac{\widetilde\mu_\infty(B_{\delta}^\mathbb{C})}{\delta^{n+1} V_n} \int_{B^{n-1}_\delta}  \sqrt{\delta^2 - |\alpha|^2} \,d\alpha \\
		&= \, \frac{\widetilde\mu_\infty(B_{\delta}^\mathbb{C})}{\delta^{n+1} V_n} \int_0^\delta  r^{n-2}\omega_{n-2} \sqrt{\delta^2 - r^2}dr\\
		&= \, \frac{\widetilde\mu_\infty(B_{\delta}^\mathbb{C})}{2\delta}.
	\end{align*}
\end{proof}
Finally, we show that the $\mathcal{H}^1$ cross-sectional measures of the profile curves $l^i$ converge to the $\widetilde\mu_\infty$ measure of the limiting profile curve $l^\infty$. In the next section, this will allow us to consider the densities of the profile curve in $\mathbb{C}$, instead of the densities of the $n$-dimensional submanifolds in $\mathbb{C}^n$.
\begin{lemma}\label{lem-h2toh1}
	Assume $L^i$ are as above, converging to $\mu_\infty$ as Radon measures,\\$|x_i| \rightarrow \infty$ and $\frac{x_i}{|x_i|} \rightarrow e_1$. Assume $L^\infty$ is a cone, smooth away from the origin, with profile curve $l^\infty$ in $\mathbb{C}$. Then denoting the profile curves by $l^i$,
	\[ \mathcal{H}^1(l^i \cap B_\delta^{\mathbb{C}}) \rightarrow \widetilde{\mu}_\infty(B_{\delta}). \]
\end{lemma}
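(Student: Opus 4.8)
My plan is to push the slicing computation of Lemma~\ref{lem-s1convergence2} down from the limit to the sequence $L^i$, and then invert an Abel‑type transform. Throughout set $g_\delta(z):=\bigl((\delta^2-|z|^2)_+\bigr)^{(n-1)/2}$, a continuous compactly supported function on $\mathbb{C}$, and $\nu_i:=\mathcal{H}^1\measurerestr l^i$. For the limit itself there is nothing to prove: since $\mu_\infty$ is $T_{e_1}$-invariant (the remark after Lemma~\ref{lem-s1convergence}) and supported on $\mathbb{C}\times\mathbb{R}^{n-1}$ (Lemma~\ref{lem-s1convergence1}), $L^\infty=l^\infty\times\mathbb{R}^{n-1}$ with $m$ constant along the $\mathbb{R}^{n-1}$-factor, and slicing by the planes $z+\mathbb{R}^{n-1}$ exactly as in Lemma~\ref{lem-s1convergence2} gives
\[
\mu_\infty(B_\delta^{\mathbb{C}^n})\;=\;V_{n-1}\int_{\mathbb{C}}g_\delta\,d\widetilde\mu_\infty\qquad\text{for every }\delta>0 .
\]

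For the sequence, parametrise $L^i$ by its $O(n)$-orbits $\mathcal{O}_z$ — each a round $(n-1)$-sphere of radius $|z-x_i|$ centred at $x_i$ — and note that the orbit quotient $L^i\to l^i$ restricts to an arclength isometry in the direction transverse to the orbits, so the coarea formula gives $\mathcal{H}^n(L^i\cap B_\delta^{\mathbb{C}^n})=\int_{l^i}\mathcal{H}^{n-1}(\mathcal{O}_z\cap B_\delta^{\mathbb{C}^n})\,d\mathcal{H}^1(z)$ (the two points in which an orbit meets $\mathbb{C}\times\{0\}^{n-1}$ are antipodal in $\mathcal{O}_z$, so once $|x_i|$ is large only one of them lies near the origin, and the constant is $1$, consistent with the conventions of Lemma~\ref{lem-s1convergence2}). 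The key geometric step is the uniform estimate
\[
\mathcal{H}^{n-1}\bigl(\mathcal{O}_z\cap B_\delta^{\mathbb{C}^n}\bigr)\;\longrightarrow\;V_{n-1}\,g_\delta(z)\qquad(i\to\infty),\quad\text{uniformly for }z\text{ in compact sets},
\]
which holds because $\mathcal{O}_z$ is a sphere through $z$ of radius $|z-x_i|\to\infty$ whose tangent plane at $z$ tends to $z+\mathbb{R}^{n-1}$ (Lemma~\ref{lem-s1convergence}) and whose distance to the origin tends to $|z|$, so near $z$ it flattens onto the affine plane $z+\mathbb{R}^{n-1}$ while the remaining, distant part of $\mathcal{O}_z$ stays outside $B_\delta^{\mathbb{C}^n}$. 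The lower bound in this estimate, combined with the local boundedness of $\mathcal{H}^n(L^i\cap B_R^{\mathbb{C}^n})$ inherited from $\mu_i\rightharpoonup\mu_\infty$, forces $\sup_i\nu_i(B_R^{\mathbb{C}})<\infty$ for every $R$; dominated convergence then upgrades the pointwise estimate to $\mathcal{H}^n(L^i\cap B_\delta^{\mathbb{C}^n})=V_{n-1}\int_{\mathbb{C}}g_\delta\,d\nu_i+o(1)$.

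Finally, pass to a subsequence with $\nu_i\rightharpoonup\nu_\infty$ (possible by the mass bound). Since $g_\delta$ is continuous and compactly supported and $\mu_\infty(\partial B_\delta^{\mathbb{C}^n})=0$ (as $L^\infty$ is a cone), the displays above give $\int g_\delta\,d\nu_\infty=V_{n-1}^{-1}\mu_\infty(B_\delta^{\mathbb{C}^n})=\int g_\delta\,d\widetilde\mu_\infty$ for all $\delta>0$. Pushing both measures forward under $z\mapsto|z|$ and substituting $\rho^2$ as variable, this becomes an equality of Riemann--Liouville fractional integrals of order $\tfrac{n+1}{2}$ of the two pushforwards; injectivity of that transform forces the pushforwards to coincide, hence $\nu_\infty(B_\delta^{\mathbb{C}})=\widetilde\mu_\infty(B_\delta^{\mathbb{C}})$ (the latter charges no sphere, being $m$ times arclength on a union of rays, so there is no boundary ambiguity). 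As every subsequential limit satisfies this, $\mathcal{H}^1(l^i\cap B_\delta^{\mathbb{C}})=\nu_i(B_\delta^{\mathbb{C}})\to\widetilde\mu_\infty(B_\delta^{\mathbb{C}})$ along the full sequence. The step I expect to be genuinely delicate is the uniform flattening estimate for $\mathcal{H}^{n-1}(\mathcal{O}_z\cap B_\delta^{\mathbb{C}^n})$ — especially securing uniformity up to $|z|=\delta$, where both sides degenerate, and ruling out that a far part of the enormous sphere $\mathcal{O}_z$ curves back into $B_\delta^{\mathbb{C}^n}$; the coarea bookkeeping (value of the constant, and that exactly one boundary point of each orbit sits near the origin) is fiddly but routine, and the Abel-inversion endgame is standard.
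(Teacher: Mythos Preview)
Your argument is essentially correct, but it takes a substantially more roundabout route than the paper's. The key difference is the choice of test set in $\mathbb{C}^n$. You intersect $L^i$ with the round ball $B_\delta^{\mathbb{C}^n}$, which forces the orbit-slice contribution to carry the variable weight $g_\delta(z)=((\delta^2-|z|^2)_+)^{(n-1)/2}$; this in turn obliges you to pass to a subsequential Radon limit $\nu_\infty$ of the $\nu_i$ and run an Abel/Riemann--Liouville inversion to recover $\nu_\infty(B_\delta^{\mathbb{C}})$ from the weighted integrals. The paper instead intersects $L^i$ with the \emph{symmetry-adapted} sets
\[
C^i_{\delta,\Lambda}\;=\;\bigl\{\,R_{x_i}(\alpha,\lambda,z):\ \alpha\in S^{n-2},\ \lambda\in[-\Lambda,\Lambda],\ z\in B_\delta^{\mathbb{C}}\,\bigr\},
\]
i.e.\ the union of spherical caps of fixed arclength $\Lambda$ over $B_\delta^{\mathbb{C}}$. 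By the coarea formula this gives $\mathcal{H}^n(L^i\cap C^i_{\delta,\Lambda})=\int_{l^i\cap B_\delta^{\mathbb{C}}}A_{n-1}(|z-x_i|,\Lambda/|z-x_i|)\,d\mathcal{H}^1(z)$, and the cap area $A_{n-1}$ is squeezed between its values at radii $|x_i|\pm\delta$, both of which tend to the flat value $\Lambda^{n-1}V_{n-1}$. Since $C^i_{\delta,\Lambda}\to C^\infty_{\delta,\Lambda}$ by Lemma~\ref{lem-s1convergence}, one reads off $\mathcal{H}^1(l^i\cap B_\delta^{\mathbb{C}})\to \mu_\infty(C^\infty_{\delta,\Lambda})/(\Lambda^{n-1}V_{n-1})=\widetilde\mu_\infty(B_\delta^{\mathbb{C}})$ directly, with no inversion, no subsequence, and no delicate uniformity at $|z|=\delta$.

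What your approach buys is a slightly more general template (it would adapt to other radially symmetric test functions), at the cost of the flattening estimate you yourself flag as delicate and the extra machinery of fractional-integral injectivity. The paper's choice of $C^i_{\delta,\Lambda}$ is really the main idea: pick the test region so that the weight is constant and the problem disappears.
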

\begin{proof}
	Define the fattened disk sets:
	\begin{align*}
		C_{\delta, \Lambda}^{\infty} \, &:= \, \{ T_{1}(\alpha, \lambda, z) : \lambda \in [-\Lambda, \Lambda], \, \alpha \in S^{n-2}, \, z \in B_\delta^{\mathbb{C}} \}, \\
		C_{\delta, \Lambda}^i &:= \{ R_{x_i}(\alpha, \lambda, z) : \lambda \in [-\Lambda, \Lambda], \, \alpha \in S^{n-2}, \, z \in B_\delta^{\mathbb{C}} \}.
	\end{align*}
	By Radon measure convergence and Lemma \ref{lem-s1convergence}, it follows that
	\[ \mathcal{H}^n(L^i \cap C_{\delta,\Lambda}^i) \rightarrow \mu_\infty(C_{\delta,\Lambda}^\infty).\]
	But also, by rotation invariance and the co-area formula, denoting by $A_{n-1}(r,\lambda)$ the $(n-1)$-dimensional Hausdorff measure of the cap of $S_{r}^{n-1}$ with polar angle of $\lambda$:
	\begin{align*}
		\lim_{i \rightarrow \infty} \mathcal{H}^n(L^i \cap C_{\delta,\Lambda}^i) \, &= \, \lim_{i \rightarrow \infty} \int_{l_i\cap B_{\delta}^{\mathbb{C}}} A_{n-1} \left(|x-x_i|, \frac{\Lambda}{|x-x_i|} \right) d\mathcal{H}^1 \\
		&\leq  \lim_{i \rightarrow \infty} \left( A_{n-1} \left(|x_i|+\delta, \frac{\Lambda}{|x_i|-\delta} \right)\int_{l_i\cap B_{\delta}^{\mathbb{C}}} 1 \, \, d\mathcal{H}^1 \right) \\
		&= \, \Lambda^{n-1} V_{n-1} \lim_{i \rightarrow \infty} \left( \mathcal{H}^1 (l^i \cap B_{\delta}^{\mathbb{C}}) \right),
	\end{align*}
	and an identical inequality holds in the other direction by changing the sign of $\delta$ in the $|x_i|+\delta$, $|x_i|-\delta$ terms. Since $L^\infty$ is invariant under $T_1$, it follows that 
	\begin{align*}
		\lim_{i \rightarrow \infty} \mathcal{H}^1 (l^i \cap B_\delta^\mathbb{C}) \, = \, \frac{\mu_\infty(C_{\delta, \Lambda}^\infty)}{\Lambda^{n-1}V_{n-1}} \, = \, \widetilde{\mu}_\infty(B_\delta^{\mathbb{C}}).
	\end{align*}
\end{proof}

\section{$O(n)$-Equivariant Mean Curvature Flow in $\mathbb{C}^n$}\label{sec-s1mcf}

We now consider an almost-calibrated, connected $O(n)$-equivariant mean curvature flow in $\mathbb{C}^n$, which we denote $L_t$, with profile curve $l_t$ in $\mathbb{C}$. By Lemmas \ref{lem-noncompact} and \ref{lem-embedded}, $l_t$ is embedded and non-compact. We will denote the abstract manifold by $L$, and the Lagrangian angle at time $t$ by $\theta_t$. 
We use $s \in \mathbb{R}$ to denote the parameter along $l_t$ (note that by Lemma \ref{lem-noncompact}, $l_t$ is non-compact) and $\alpha$ for the spherical parameter, so that the Lagrangian submanifold is parametrised by
\[ L_t(s, \alpha)\, = \, (a_t(s) \, \alpha, \, \, b_t(s) \, \alpha ). \]

We will assume throughout that our flow has \textbf{planar asymptotics}. By this, we mean that our profile curve $l_t$ is graphical over finitely many lines outside of some ball $B_R$, and the graph function converges smoothly to 0 at infinity. In fact due to the equivariance, it must be graphical over exactly one or two lines, depending on whether $l_t$ passes through the origin or not. This assumption provides uniformly bounded area ratios, which are necessary to use Neves' Theorem \ref{thm-a} and \ref{thm-b}. Note that if the profile curve is asymptotic to two different lines, then for the curve to be almost-calibrated the angle between these lines must be less than or equal to $\frac{2\pi}{n}$. This follows from (\ref{eq-langle}) by considering the value of $\theta$ as the profile curve decays to the asymptotes, and will be proven more rigorously in Lemma \ref{lem-cone}.

The curvature tensors are simpler in the equivariant case. Working with  $L_t \cap (\mathbb{C}\times\{0\}^{n-1})$ for simplicity, the mean curvature vector of the Lagrangian can be divided into two key components:
\[ \vec{H} = \vec{k} - (n-1)\vec{p}, \]
where $\vec{k} = \frac{{\gamma''}^\perp}{|\gamma'|^2}$ is the curvature of the profile curve, and $\vec{p} = \frac{\gamma^\perp}{|\gamma|^2}$ is the curvature induced by the equivariance. Denoting by $\nu = \frac{J\dot\gamma}{|\dot\gamma|}$ the unit normal vector within $\mathbb{C}\times\{0\}^{n-1}$, we can also define the scalar quantities $\vec{k} = k\nu$ and $\vec{p} = p\nu$.

\subsection{Evolution Equations for Equivariant Lagrangians}
The equivariant condition significantly simplifies the study of mean curvature flow, since we may study the flow of the profile curve, given by
\begin{equation}\label{eqn-sff}
	\frac{\p l}{\p t} \, = \, \vec k - (n-1)\vec p.
\end{equation}
We refer to this as the \textbf{equivariant flow}. This is simpler than Lagrangian mean curvature flow in general as it is a codimension 1 flow, and therefore essentially a single PDE as opposed to a system. 

There are several parametrisations of the profile curve that will come in useful. Firstly, if $u:\mathbb{R} \rightarrow \mathbb{R}$ is a graph function such that our flow may be expressed as $l_t(x) = (x, u_t(x))$, then the evolution of $u_t$ under mean curvature flow is given by
\begin{align}
	\frac{\p u}{\p t} \, &= \, \frac{ u''}{1 + (u')^2} + (n-1)\frac{xu' - u}{x^2 + u^2} \label{eqn-equiflow1}\\
	&= a(u')u'' \, + \, b(x,u,u'). \label{eqn-equiflow2}
\end{align}
It is also often useful to parametrise in polar coordinates, $l_t(s) = r_t(s)e^{is},$ in which case the evolution of $r_t$ under mean curvature flow is given by
\begin{align}
	\frac{\p r}{\p t} \, &= \, -\frac{\theta'}{r}, \label{eqn-rflow}
\end{align}
where $\theta'$ is the derivative of the Lagrangian angle with respect to the angle $s$.

\subsection{Embeddedness and Avoidance Principle}

We have already seen in Lemma \ref{lem-embedded} that almost-calibrated equivariant Lagrangians in $\mathbb{C}^n$ must be embedded. In this section, we derive some more general results about embeddedness and avoidance for equivariant Lagrangian mean curvature flow, without requiring the almost-calibrated hypothesis. This will be useful later so that we can use barriers to control the flow. Though embeddedness does not typically hold for higher codimension MCF, it will for the equivariant case, since the profile curve is a codimension 1 flow.

\begin{lemma}\label{lem-emb-2}
	Let $L_t$ be a connected $O(n)$-equivariant mean curvature flow in $\mathbb{C}^n$, and assume that $T_{emb} < \infty$. Then if $T_{emb} \neq T_{sing}$, the embeddedness cannot break at the origin, i.e. there do not exist $a, b \in L$ such that $L_{T_{emb}}(a) = L_{T_{emb}}(b) = O$.
\end{lemma}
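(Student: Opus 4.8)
The plan is to argue by contradiction using the strong maximum principle (or Sturmian/zero-counting ideas) applied to the profile curve. Suppose there exist $a,b \in L$ with $L_{T_{emb}}(a) = L_{T_{emb}}(b) = O$; by $O(n)$-equivariance this means the profile curve $l_{T_{emb}}$ passes through the origin along (at least) two distinct local sheets. Since $T_{emb} \neq T_{sing}$, the flow is smooth at time $T_{emb}$, so near the origin the profile curve is a smooth embedded arc (or collection of arcs) for $t$ slightly before $T_{emb}$, and two of these sheets must come together exactly at $O$ at time $T_{emb}$. The key structural fact is the reflective symmetry $l(s) = -l(-s)$: any sheet of the profile curve through the origin is, together with its reflection through $O$, part of a single symmetric configuration, and $\vec{H} = 0$ at the origin whenever $l$ passes through it (noted in Section 3).

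First I would set up local coordinates so that, for $t$ slightly less than $T_{emb}$, the two approaching sheets are graphs $x \mapsto (x, u^1_t(x))$ and $x \mapsto (x, u^2_t(x))$ over a common line through the origin (after a rotation), each solving the graphical equivariant equation (\ref{eqn-equiflow2}), which is a quasilinear parabolic PDE of the form $\partial_t u = a(u')u'' + b(x,u,u')$. Then $w_t := u^1_t - u^2_t$ satisfies a linear parabolic equation $\partial_t w = \tilde a\, w'' + \tilde b\, w' + \tilde c\, w$ with smooth coefficients on the relevant space-time region (coefficients obtained by the usual difference-quotient trick, using smoothness up to $t = T_{emb}$ away from $T_{sing}$). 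Embeddedness for $t < T_{emb}$ means $w_t$ is nowhere zero, hence of one sign, say $w_t > 0$; at $t = T_{emb}$ we would have $w_{T_{emb}}(0) = 0$, an interior spatial point and the terminal time. The strong maximum principle (Hopf lemma form) then forbids an interior minimum value $0$ being attained unless $w \equiv 0$, which would contradict the two sheets being distinct for $t < T_{emb}$ — so this already gives the contradiction in the simplest configuration.

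The main obstacle, and what needs care, is that the two sheets meeting at $O$ need not be graphical over the same line, and more seriously the reflective symmetry forces the tangent directions of paired sheets at $O$ to be related, so the "two sheets" could actually be the two ends of a single connected arc that is pinching a loop onto the origin (the curve develops a self-intersection at $O$ from a previously embedded arc). To handle this I would use the symmetry to reduce to a half-neighbourhood: parametrize $l_t$ by arclength near the relevant parameter values and use that $l_t(s)$ and $-l_t(-s) = l_t(s)$ (after relabelling) must agree, so the configuration near $O$ is determined by its behaviour on one side; then the relevant comparison is between a sheet and either another sheet or its own reflection. In the reflection case, the function to track is $r_t(s)$ in polar coordinates (equation (\ref{eqn-rflow}), $\partial_t r = -\theta'/r$), or better, the signed distance of the sheet from the origin; a first touching of the origin at an interior time would violate the maximum principle applied to this quantity, again because $\vec H = 0$ at $O$ makes the origin an "instantaneous fixed point" of the flow that a smooth embedded sheet cannot reach in finite time from one side without the whole sheet degenerating. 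I expect the bookkeeping of these cases — enumerating how two local sheets can be positioned relative to $O$ and to each other under the reflective symmetry, and choosing the right scalar quantity (graph difference vs. polar radius vs. distance function) in each — to be the real content; the maximum-principle step itself is standard once the correct quantity is isolated.
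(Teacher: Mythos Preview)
Your approach has a genuine gap: the graphical equivariant equation \eqref{eqn-equiflow2} is \emph{singular} at the origin. The zeroth-order coefficient $\tilde c$ in your linearised equation for $w = u^1 - u^2$ comes from $\partial_u b(x,u,u')$, and since $b(x,u,u') = (n-1)\tfrac{xu'-u}{x^2+u^2}$ one has $\partial_u b \sim -(n-1)/(x^2+u^2)$ near $(0,0)$. Thus precisely at the spacetime point where you want to apply the strong maximum principle, the coefficients of your linear equation blow up, and the standard Hopf/strong maximum principle does not apply. The same obstruction appears in your polar-coordinate alternative: the equation $\partial_t r = -\theta'/r$ is singular at $r=0$. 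This is not a bookkeeping issue about which scalar quantity to choose; it is intrinsic to the problem, since the equivariant structure itself degenerates at the origin.

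The paper's proof exploits this singularity rather than fighting it. It first argues (using uniqueness of MCF and the fact that a point at the origin stays there by symmetry) that at least one of the two approaching sheets, say $\alpha_n$, does \emph{not} contain the origin for $t_n < T_{emb}$. Taking the closest point $p_n \in \alpha_n$ to the origin, one has $p_n \to 0$ and, at that point, the position vector is normal to the curve. The second fundamental form of the equivariant Lagrangian then satisfies $|A|^2 = k^2 + 3(n-1)p^2 \geq 3(n-1)/|p_n|^2 \to \infty$, forcing $T_{emb} = T_{sing}$. In other words, the very singularity that breaks your maximum-principle argument is what the paper uses to produce the contradiction. Note that in the subsequent Theorem~\ref{thm-emb} the paper \emph{does} use exactly your difference-of-graphs maximum-principle argument, but only after this lemma has ruled out contact at the origin, so the coefficients are genuinely smooth there.
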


\begin{proof}
	We work with the profile curve for simplicity. Assume that there exist $a, b$ such that \[ l_{T_{emb}}(a) = l_{T_{emb}}(b) = O, \] therefore $ l_{t}(a),  l_{t}(b) \rightarrow O$ as $t \rightarrow T_{emb}$. Choose a sequence $t_n \rightarrow T_{emb}$, $t_n < T_{emb}$.
	
	\emph{Claim.} If $T_{emb} \neq T_{sing}$, then there exists $N \in \mathbb{N}$ such that for all $n \geq N$, there exists $\veps_n$ such that $l_{t_n}(a), l_{t_n}(b)$ lie in different connected components of $B_{\veps_n}(0)\cap l_{t_n}$.
	
	\emph{Proof of claim.} We prove the contrapositive, so assume that there exists a subsequence $n_k$ such that $l_{t_{n_k}}(a), l_{t_{n_k}}(b)$ lie in the same connected component of $B_\veps(0)\cap l_{t_{n_k}}$ for all suitable $\veps$. Therefore, 
	\[ \forall x \in [a,b], \quad l_{t_{n_k}}(x) \rightarrow O \]
	and so $l_{T_{emb}}$ is not immersed. Therefore $T_{emb} = T_{sing}$, and the claim is proven.
	
	We may therefore find sequences of numbers $\veps_n$ and of connected components $\alpha_n$, $\beta_n$ in $B_{\veps_n} \cap l_{t_n}$ such that $l_{t_n}(a) \in \alpha_n$, $l_{t_n}(b) \in \beta_n$. 
	If there exists a time $t$ and a point $c$ with $l_t(c) = O$, then by the equivariant symmetry and uniqueness of the mean curvature flow, for all times $s>t$, $l_s(c) = O$. It follows that if there exist times $t<s < T_{emb}$ and points $c \neq d$ with $l_t(c) = l_s(d) = O$, then
	\[ l_{s}(c)=l_{s}(d) = O,\]
	contradicting $T_{emb}$ being the first time of non-embeddedness. Therefore (up to relabelling) at least one of the sequences of connected components $\alpha_n$, $\beta_n$ never includes the origin. Without loss of generality let it be $\alpha_n$.
	
	Now let $a_n \in \mathbb{R}$ be the point such that $p_n := l_{t_n}(a_n)$ is the closest point in $\alpha_n$ to the origin; note that $p_n \rightarrow 0$. Then 
	\begin{align*}
		\left\langle \frac{\p l_{t_n}}{\p s}(a_n), p_n \right\rangle \, = \, 0 \quad &\implies \quad \left\langle p_n, \nu(a_n) \right\rangle \, = \, |p_n|,
	\end{align*}
	where $\nu$ is the outward normal, so at the space-time point $(p_n,t_n)$,
	\begin{align*}
		|A|^2 \, &= \, k^2 + 3(n-1)p^2 = k^2 + \frac{3(n-1)}{|p_n|^2}.
	\end{align*}
	This diverges to infinity as $n \rightarrow \infty$, and therefore, $T_{emb} = T_{sing}$.
\end{proof}

\begin{theorem}[Preservation of Embeddedness/Avoidance Principle]\label{thm-emb}
	Let $L_t$ be a connected $O(n)$-equivariant mean curvature flow in $\mathbb{C}^n$ with planar asymptotics, such that each end is asymptotic to a different $n$-plane. Assume $T_{emb} < \infty$. Then $T_{emb} = T_{sing}$.
	
	Additionally, if $L_t$ and $\overline{L}_t$ are two such flows, initially disjoint and embedded and with different asymptotes, then they remain disjoint until a singularity occurs.
\end{theorem}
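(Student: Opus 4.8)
The plan is to argue as in the classical avoidance principle for curve shortening flow, applied to the profile curve, which evolves by (\ref{eqn-equiflow1}). The only feature that is not completely standard is the equivariant forcing term $-(n-1)\vec p$ appearing in (\ref{eqn-sff}); but $\vec p = \gamma^\perp/|\gamma|^2$ depends only on the position of the curve, so at a prospective contact point the forcing terms of the two colliding arcs coincide, and the difference of the two pieces still satisfies a clean linear parabolic equation. The one exceptional locus is the origin, where $\vec p$ degenerates; this is handled separately, and is precisely why Lemma \ref{lem-emb-2} is on hand.

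For the first assertion, suppose for contradiction that $T_{emb} < T_{sing}$. Since the flow is smooth and immersed on $[0, T_{emb}]$, non-embeddedness at $T_{emb}$ means there are parameters $a \neq b$ with $l_{T_{emb}}(a) = l_{T_{emb}}(b) =: q$. If $q = O$, then Lemma \ref{lem-emb-2} forces $T_{emb} = T_{sing}$, a contradiction. If $q \neq O$, then because $T_{emb}$ is the first non-embedded time the two arcs of $l_{T_{emb}}$ through $q$ must be tangent at $q$ (a transverse intersection is an open condition and would persist to slightly earlier times). After rotating coordinates I would write these arcs, for $t$ slightly below $T_{emb}$ and on a fixed bounded neighbourhood $U$ of $q$ with $\overline U$ disjoint from $O$, as graphs $u^1_t \geq u^2_t$ over one coordinate axis; both then satisfy a uniformly parabolic quasilinear equation of the same type as (\ref{eqn-equiflow2}), with smooth coefficients that are bounded on $U$ — together with $C^2$-bounds on the $u^i_t$ — because $U$ is a fixed positive distance from the origin and $t < T_{sing}$. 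The difference $w := u^1 - u^2 \geq 0$ then satisfies, by the mean value theorem, a uniformly parabolic linear equation $\partial_t w = \hat a\, w'' + \hat b\, w' + \hat c\, w$ with bounded coefficients; since $w > 0$ for $t < T_{emb}$ but $w(q, T_{emb}) = 0$, the strong maximum principle (after replacing $w$ by $e^{-\lambda t} w$ with $\lambda$ large, so the zeroth-order coefficient becomes nonpositive) forces $w \equiv 0$ on $U \times (T_{emb} - \delta, T_{emb}]$, contradicting $w > 0$ for $t < T_{emb}$. Hence $T_{emb} = T_{sing}$. The hypothesis that the two ends are asymptotic to distinct planes lets me localise: outside a fixed ball $B_R$ the profile curve is a pair of disjoint graphs over the two distinct asymptotic lines, so every self-intersection occurs in the compact set $\overline{B_R}$.

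For the second assertion, I would suppose that $L_t$ and $\overline{L}_t$ first meet at a time $T$ strictly before either singular time, at a point $q$, and again seek a contradiction. First I claim $q \neq O$: by the first assertion each flow is embedded up to its singular time, so $l_T$ and $\overline{l}_T$ are embedded, but a connected embedded profile curve through the origin must — by the reflective symmetry $l(-s) = -l(s)$ together with the fact that its asymptotic lines pass through the origin — have both ends asymptotic to the \emph{same} line, contradicting the hypothesis of distinct asymptotes; so neither curve passes through $O$ at time $T$, whence $q \neq O$. Away from the origin the argument is then identical to the previous case: both profile curves solve the same equation (\ref{eqn-equiflow1}), the contact at the first meeting time is tangential, and writing the two curves near $q$ as graphs $u_t$ and $\overline{u}_t$ over their common tangent direction, the difference $w$ satisfies a bounded-coefficient linear parabolic equation, is positive for $t < T$, and vanishes at $(q, T)$; the strong maximum principle again yields $w \equiv 0$ near $(q, T)$, a contradiction. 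Hence the two flows remain disjoint until a singularity occurs.

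I expect the main obstacle to be care rather than depth: one must check that the two contacting arcs really can be expressed as graphs with uniform $C^2$ control on a fixed neighbourhood of $q$ right up to the contact time — using boundedness of $|A|$ before $T_{sing}$ and the openness of the graphical condition — and that once one is a definite distance from the origin the equivariant term contributes only bounded lower-order coefficients to the equation for $w$. The genuinely delicate scenario, a contact occurring \emph{at} the origin where $\vec p$ blows up, is exactly the content of Lemma \ref{lem-emb-2} for the single flow, and in the two-flow case is excluded by combining embeddedness with the reflective symmetry and the hypothesis on the asymptotes.
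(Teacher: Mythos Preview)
Your proposal is correct and follows essentially the same route as the paper: localise using the distinct-asymptote hypothesis, rule out contact at the origin (via Lemma \ref{lem-emb-2} for the single flow), write the two tangent arcs as graphs over the common tangent line, linearise the difference by interpolation/mean value, and apply the strong parabolic maximum principle. The only notable variation is in the avoidance case: you exclude contact at the origin by arguing that a profile curve through $O$ has both ends asymptotic to the same line (via the reflective symmetry), whereas the paper simply observes that contact at the origin would force curvature blowup there, exactly as in the proof of Lemma \ref{lem-emb-2}; both arguments are valid and short.
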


\begin{proof}
	We prove only preservation of embeddedness. The avoidance principle follows precisely the same argument, noting that the first point of contact for the flows cannot be at the origin else the curvature would blow up at that time. Assume that $T_{emb} < T_{sing}$, for a contradiction. Then we may take a sequence of points $(x_n, t_n)$ and points $a_n, b_n \in \mathbb{R}$ such that $l_{t_n}(a_n) = l_{t_n}(b_n) = p_n$, where $t_n$ is a decreasing sequence converging to $T_{emb}$. Since the ends of the profile curve have different asymptotes, there exists $R$ such that $p_n \in B_R$ for all $n$, so passing to a subsequence there exist limits $a_n \rightarrow a$, $b_n \rightarrow b$, $p_n \rightarrow p$ such that $l_{T_{emb}}(a) = l_{T_{emb}}(b) = p$. By Lemma \ref{lem-emb-2}, $p$ is not the origin.
	
	Since $p$ is the first point of contact, we must have (at $T_{emb}$) $l'(a) = l'(b)$, and so there is a unique line $\Lambda$ through the origin parallel to the shared tangent space to $l$ at $p$. Additionally we may take $\veps$ sufficiently small such that $B_{\veps}(p) \cap l$ has two connected components for $t<T_{emb}$, which may be written as graphs $u_1, u_2$ over $\Lambda$, with $u_1 \geq u_2$ and $u_1=u_2$ at a point $x \in \Lambda$. These graphs both satisfy the equivariant mean curvature flow equation (\ref{eqn-equiflow2}).
	
	We show that the difference $v := u_1 - u_2$ also satisfies a parabolic differential equation. Defining $u_s := su_1 + (1-s)u_2$, we interpolate between the equations:
	\begin{align*}
		\frac{\p v}{\p t} \, &= \, \frac{\p u_1}{\p t} - \frac{\p u_2}{\p t} \,
		= \, a(u_1')u_1'' \, + \, b(x, u_1, u_1') \, - \, a(u_2')u_2'' \, - \, b(x, u_2, u_2')\\
		&= \, \int_0^1 \frac{\p}{\p s} \left( a(u_s')u_s'' \, + \, b(x, u_s, u_s') \right) ds \\
		&= \, \left( \int_0^1 a(u_s') ds \right) v'' \, + \, \widetilde{b}(x) v' \, + \, \widetilde{c}(x) v.
	\end{align*}
	We may therefore apply the parabolic maximum principle to conclude that $v = 0$ at some earlier time, contradicting the definition of $T_{emb}$.
\end{proof}

\subsection{The Type I Blowup}

We now return to almost-calibrated flows, and examine the Type I blowup. By Neves' Theorem A (Theorem \ref{thm-a}), any Type I blowup of our LMCF must be a union of equivariant special Lagrangian planes, and due to Theorem \ref{thm-ACLMCFsing}, almost-calibrated LMCF cannot develop Type I singularities. Therefore we expect the Type I blowup to consist of a union of multiple equivariant planes through the origin. We will show in this section that in fact it must be a pair of planes, with the same Lagrangian angle.

Throughout we will use the notation $L^i_s$ for a sequence of Type I rescalings, with factors $\lambda_i$, and profile curves $l_s^i$. As before, we assume that $L_0$ is asymptotically planar, and this implies the area bound
\[\mathcal{H}^n(L_0 \cap B_R(0)) \, \leq \, C_0R^n. \]
This implies uniformly bounded area ratios for all time by Huisken's monotonicity formula, see for example \cite{neves:1}.

The following main lemma proves that blowup sequences centered away from the centre of rotation converge to a single plane. This will be used to rule out singularities away from the origin, as well as double density planes for singularities at the origin. Throughout, we use the notation $B_{a}(x)$ for a ball of radius $a$ centered at $x$, and $B_{a}$ as shorthand for $B_a(O)$.

\begin{lemma}\label{lem-main}
	Let $L^i$ be a sequence of uniformly almost-calibrated and connected Lagrangian submanifolds in $\mathbb{C}^n$, with the property that $L^i - x_ie_1$ is an $O(n)$-equivariant submanifold of $\mathbb{C}^n$ for a sequence $x_i \in \mathbb{C}$ and $e_1 = (1,0,\ldots, 0) \in \mathbb{C}^n$. Assume that $x_i$ eventually lies outside of $B_d$ for some $d$.
	
	Assume further that the conclusions to Theorem \ref{thm-a} and \ref{thm-b} hold locally in $B_1$ for the sequence $L^i$. Explicitly,
	\begin{itemize}
		\item There exists a finite set $\{\overline{\theta}_1, \ldots, \overline{\theta}_M\}$ and integral special Lagrangian cones $\{L_1, \ldots, L_M \} $ such that for all $f \in C^2(\mathbb{R})$, $\phi \in C^\infty_c(B_1)$, 
		\begin{equation}\lim_{i \rightarrow \infty} \int_{L_i} f(\theta^i) \phi \, d\mathcal{H}^n \, = \, \sum_{j=1}^M m_j f(\overline{\theta_j}) \mu_j(\phi), \label{eq-thma}
		\end{equation}
		\item For any convergent sequence $\Sigma^i$ of connected components of $B_{2\delta}\cap L^i$ intersecting $B_{\frac{\delta}{2}}$, there exists a special Lagrangian cone $L$ with Lagrangian angle $\overline{\theta}$ such that for all $f \in C^2(\mathbb{R})$, $\phi \in C^\infty_c(B_1)$,
		\begin{equation}\lim_{i \rightarrow \infty} \int_{\Sigma_i} f(\theta^i) \phi \, d\mathcal{H}^n \, = \, m f(\overline{\theta})\mu(\phi).\label{eq-thmb}
		\end{equation}
	\end{itemize}
	Then there exists a single special Lagrangian plane $P$ with angle $\overline{\theta}$ and underlying Radon measure $\mu_P$ such that for all $\phi \in C^{\infty}_c(B_1)$, $f \in C^2(\mathbb{R})$:
	\[ \lim_{i \rightarrow \infty} \int_{L^i}f(\theta^i)\phi \, d\mathcal{H}^n \, = \,f(\overline{\theta}) \mu_P(\phi).\]
\end{lemma}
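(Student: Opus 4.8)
The plan is to combine the measure-theoretic input from Neves' theorems with the structural constraints coming from equivariance, using the distance of the centres $x_i e_1$ from the origin to rule out all behaviour except a single plane. First I would observe that by Lemma \ref{lem-speciallagcones} each special Lagrangian cone $L_j$ appearing in \eqref{eq-thma}, being $O(n)$-equivariant (or translation-invariant in a limit, by Lemma \ref{lem-s1convergence}), is a union of special Lagrangian planes; since $x_i$ eventually lies outside $B_d$, Lemma \ref{lem-s1convergence} tells us that on $B_1$ the limit is in fact \emph{translation-invariant} in the $J e_1$ direction rather than genuinely equivariant, so the profile curve of the limit consists of straight lines, and each $L_j$ is a union of planes \emph{all parallel to a common line} $\Lambda_j \subset \mathbb{C}$ through the origin. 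The almost-calibrated hypothesis, passed to the limit, forces all the angles $\overline{\theta}_j$ to lie in an interval of length $<\pi$.

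Next I would upgrade the weak convergence to smooth, multiplicity-one convergence away from the origin via White's regularity theorem (Theorem \ref{thm-white}). Taking $f\equiv 1$ in \eqref{eq-thma} gives Radon-measure convergence $\mathcal{H}^n\measurerestr L^i \to \sum m_j \mu_j$ on $B_1$; at any point $p \in B_1 \setminus \{O\}$ lying on exactly one plane of one cone $L_j$ with multiplicity $m_j$, the Gaussian density there is $m_j$ plus a small error, so if some $m_j \geq 2$ we would get density $\geq 2-\veps$ somewhere, and the plan is to rule this out. Here is where the connectedness and the second bullet \eqref{eq-thmb} enter: a convergent sequence of connected components $\Sigma^i$ in a small ball $B_{2\delta}(p)$ converges to a single special Lagrangian cone $L$ with a single angle $\overline{\theta}$, hence (being translation-invariant and a cone through a point not the origin, so actually a single plane) $L$ is one plane with some multiplicity $m$. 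I would then argue that $m=1$: if $m\geq 2$, the plane $L$ would have to be the limit of either two distinct components that have collided — impossible for embedded equivariant curves whose embeddedness we can arrange — or of a single component that is a multi-sheeted graph over $\Lambda$, and in the latter case the almost-calibrated condition combined with equation \eqref{eq-langle} for the Lagrangian angle (the total turning of a graph over a line between two parallel sheets is $\pi$, which with the $(n-1)\arg$ term exceeds $\pi$, exactly as in the proof of Lemma \ref{lem-embedded}) gives a contradiction. This yields $m_j = 1$ for every $j$, so the convergence on $B_1 \setminus\{O\}$ is smooth and multiplicity one.

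Finally I would show there is only one plane, i.e. $M=1$. Suppose $L^\infty \cap B_1$ contained two distinct planes $P_1, P_2$ (each of multiplicity one, each parallel to some line through $O$). Because each $L^i$ is a single \emph{connected} equivariant submanifold, its profile curve $l^i$ is a single embedded connected curve; a neighbourhood-by-neighbourhood chaining argument using the smooth multiplicity-one convergence on $B_1\setminus\{O\}$ shows $l^i \cap (B_1 \setminus B_\rho)$ can converge to a disconnected union of line segments only if the two pieces are joined through $B_\rho$, for every $\rho$; letting $\rho \to 0$, the only way two straight pieces of $l^\infty$ with different directions can be joined near $O$ is if $O \in l^\infty$ and the two lines actually meet there — but $O \in l^\infty$ forces, via the equivariant curvature term $\vec p = \gamma^\perp/|\gamma|^2$ and the fact that $x_i \notin B_d$ so $O$ is a \emph{regular} point of each $l^i$ (the origin is at bounded distance from the rotation centre only if $|x_i|$ is bounded, which it is not), a contradiction with the $\mathcal{H}^1$-density computation of Lemma \ref{lem-h2toh1}–\ref{lem-s1convergence2} which pins the density at $O$ to $1$ while a genuine transverse pair of lines would give density $2$. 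Hence $M=1$, $L^\infty \cap B_1 = P$ is a single special Lagrangian plane of multiplicity one with a single angle $\overline{\theta}$, and then for general $f \in C^2(\mathbb R)$ the smooth multiplicity-one convergence gives $\int_{L^i} f(\theta^i)\phi \to f(\overline{\theta})\mu_P(\phi)$ directly.

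The main obstacle I expect is the step ruling out higher multiplicity and, relatedly, proving $M=1$: the measure-theoretic statements \eqref{eq-thma}–\eqref{eq-thmb} do not by themselves exclude a double-density plane or two parallel planes, and one genuinely needs the interplay of (i) connectedness of each $L^i$, (ii) the embeddedness/curvature-blowup mechanism at the origin from Lemma \ref{lem-emb-2}, and (iii) the almost-calibrated angle bound via \eqref{eq-langle}, together with the density normalisation at $O$ from Section \ref{sub-s1lim}. Making the chaining-of-connected-components argument rigorous, and in particular controlling what happens in the shrinking balls $B_\rho$ around the origin uniformly in $i$, is the delicate part.
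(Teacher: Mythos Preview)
Your overall strategy---use the equivariant structure plus almost-calibrated to force density one, then show there is only one plane---matches the paper in spirit, but the execution has a genuine gap at the step where you rule out multiplicity $m \geq 2$.

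The problem is circular use of regularity. You propose to upgrade to smooth convergence via White's theorem, but White requires Gaussian density close to $1$, which is exactly what you are trying to establish. You try to avoid this by first ruling out $m \geq 2$ with a dichotomy: either two embedded components collide, or a single component is a ``multi-sheeted graph'' whose turning forces $\theta$ to vary by $\pi$. But without any curvature control you have no right to assert that a single connected component converging with multiplicity $m$ to a plane is a multi-sheeted graph, or that it contains a clean U-turn where $\arg(\dot\gamma)$ changes by $\pi$. The hypotheses give only \emph{integral} convergence of $\theta$ in \eqref{eq-thmb}, not pointwise or $C^1$ control, so the component could a priori oscillate badly on a small-measure set. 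Your chaining argument for $M=1$ has the same defect: it presupposes smooth graphical structure near the origin that you have not yet earned.

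The paper's proof bypasses all of this with a direct $\mathcal{H}^1$-density estimate on the profile curve. The key observation you are missing is that since the centre $x_i$ stays outside $B_d$, on a small ball $B_\delta$ around the origin the quantity $\arg(\gamma - x_i)$ is nearly constant (varying by at most $\sin^{-1}(\delta/|x_i|)$), so the formula $\theta = (n-1)\arg(\gamma - x_i) + \arg(\dot\gamma)$ pins down $\arg(\dot\gamma)$ to within a small error wherever $\theta$ is close to $\overline\theta$. The integral convergence \eqref{eq-thmb} says this happens outside a set of $\mathcal{H}^1$-measure $<\veps$. One then estimates $\mathcal{H}^1(\gamma^i \cap B_\delta)$ directly by integrating $\dot\gamma$ and using the fundamental theorem of calculus, obtaining a density ratio bounded away from $2$. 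This needs no smoothness whatsoever. The ``only one connected component'' claim is then handled by a separate combinatorial lemma: two components with \emph{different} limiting angles must collide in a larger ball (embeddedness contradiction), while two with the \emph{same} angle force a third component between them with a different angle (connectedness and topology of embedded curves in the plane). That argument, too, uses only the integral angle control, not any graphical structure.
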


\begin{proof}
	The proof is by a density argument (extending a similar argument of A. Neves in \cite{neves:1}), a sketch of which is as follows. By the work of Section \ref{sec-s1} that allows us to work with the profile curve, we are done if we can prove that
	\[ \lim_{i \rightarrow \infty} \frac{\mathcal{H}^1(\gamma^i \cap B_\delta)}{2\delta} = 1, \]
	where $B_\delta := B_\delta(O)$. Since we know already that the limit is a union of planes it follows from this that it must be a single plane. We therefore wish to estimate this density ratio.
	
	Taking a sequence of connected components of $l^i$, which we label $\gamma^i$, (\ref{eq-thmb}) gives integral convergence of the Lagrangian angle in $B_\delta$, and since the centre of $O(n)$ symmetry $x_i$ is away from the origin, this implies a tight bound on the angle of $\dot\gamma^{i}$. We then use this to show the above density bound, for sufficiently small $\delta$. However we are not done, as there may be another, different sequence of connected components that can increase the total density further -- we must rule this out. 
	
	Considering two different connected components $\xi^i$ and $\eta^i$, they can either converge to the same Lagrangian angle, or a different one. If the limiting Lagrangian angle is different, then we can show that $\xi^i$ and $\eta^i$ must collide, perhaps in a larger ball, since the angles of their derivatives are tightly bounded around different values. On the other hand if the Lagrangian angle is the same, then we can show by embeddedness that there must be another connected component in between with different Lagrangian angle, causing a collision as before. This shows that there is in fact only one connected component to consider, and we are done.\\
	
	We now fill in the details. Let $B_{2\delta}$ be small enough so that for $i$ large, $x_i \notin B_{2\delta}$, and consider a sequence $\Sigma^i$ of connected components of $L^i\cap B_{2\delta}$ intersecting $B_{\frac{\delta}{2}}$, with profile curve $\gamma^i$. By (\ref{eq-thmb}), there exists a special Lagrangian cone $L^{\infty}$ with underlying Radon measure $\mu_\infty$ and an integer multiplicity $m$ such that for $\phi \in C^{\infty}_c(B_{\delta}), f \in C^2{\mathbb{R}}$:
	\begin{equation}\label{eq-radonconv}
		\lim_{i \rightarrow \infty} \int_{\Sigma^i} f(\theta^i) \phi d\mathcal{H}^n \, = \, m f(\overline{\theta})\mu_\infty(\phi).
	\end{equation}
	We first use this convergence to get a bound on $\arg(\dot\gamma)$. For $\varepsilon > 0$, define the following ``$\varepsilon$-good" and ``$\varepsilon$-bad" subsets of $\gamma^i\cap B_\delta$: 
	\begin{align*}
	S_\delta(\gamma^i) \, &:= \, \left\{ x \in \gamma^i \cap B_\delta \, \Big| \, |\theta^i(x) - \overline{\theta}(x)| \leq \veps \right\},\\
	T_\delta(\gamma^i) \, &:= \, \left\{ x \in \gamma^i \cap B_\delta \, \Big| \, |\theta^i(x) - \overline{\theta}(x)| > \veps \right\},
	\end{align*}
	note we suppress the dependence on $\veps$ for notational clarity. Then (\ref{eq-radonconv}) implies that
	\begin{equation}\label{eq-sepsilon}
		\forall \veps \,\,\, \exists N \,\,\, \mbox{s.t} \,\,\, \forall i > N,  \quad \mathcal{H}^1(T_\delta(\gamma^i)) < \veps.
	\end{equation}
	Our aim is therefore to estimate $\mathcal{H}^1(S_\delta(\gamma^i))$. Taking arguments with respect to the point $x_i$ and the $e_1$ direction, by (\ref{eq-langle}) the Lagrangian angle is given by
	\begin{equation}
		\theta \, = \, (n-1)\arg(\gamma) \, + \, \arg(\dot\gamma).
	\end{equation}
	Denoting $b_i := \arg(O - x_i)$, $b_i$ converges to some $b$ (after passing to a subsequence if necessary). Then on $B_{\delta}$ we have the bound 
	\[\arg(\gamma^i) \, \in \, \left(b_i - \sin^{-1} \left(\tfrac{\delta}{|x_i|}\right), b_i + \sin^{-1} \left(\tfrac{\delta}{|x_i|}\right)\right),\] 
	(see Figure \ref{fig-0}) and therefore on $S_\delta(\gamma^i)$, taking $i$ sufficiently large so that $|b-b_i| < \veps$, we obtain a bound on the argument of $\dot\gamma^i$:
	\begin{equation}\label{eq-rhodef}
		|\arg(\dot\gamma^i) - \overline{\theta} + (n-1)b| \, \leq \, (n-1)\sin^{-1} \left(\tfrac{\delta}{|x_i|}\right) + 2\veps =: \rho(\delta, \veps).
	\end{equation}	
	\begin{figure}
		\centering
		\includegraphics[width=0.55\linewidth]{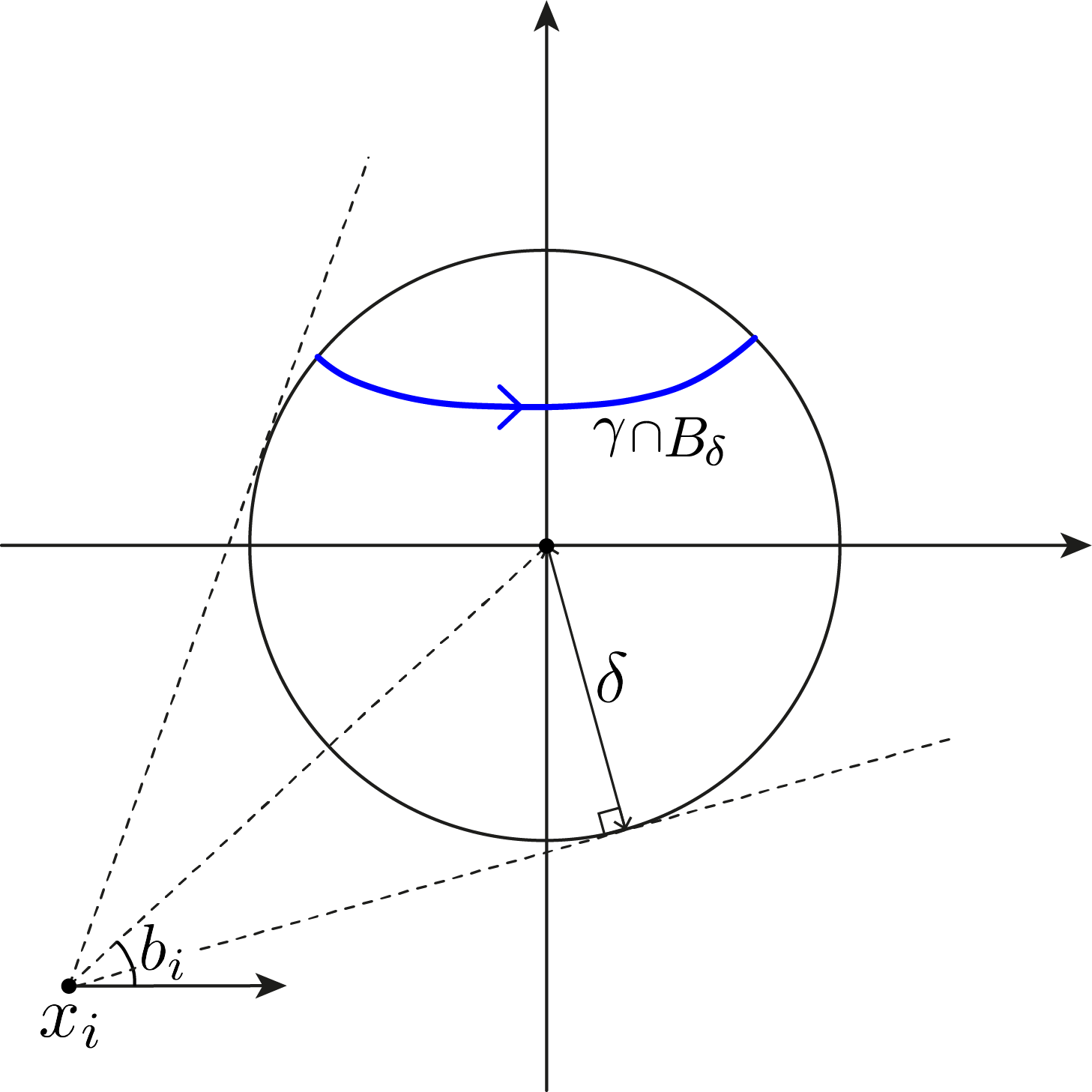}
		\caption[Diagram for the proof of Lemma \ref{lem-main}]{The setup of Lemma \ref{lem-main}.}
		\label{fig-0}
	\end{figure}
	Parametrise by unit speed, so that $\dot\gamma^i(s) \, = \, e^{i(\lambda(s)+\overline{\theta}-(n-1)b)}$ for an angle function $\lambda(s)$. Then equation (\ref{eq-rhodef}) implies $|\lambda(s)| \, \leq \, \rho $, and therefore
	\begin{align}
		\left| \int_{S_\delta(\gamma^i)} \dot\gamma^i(s) \, ds \right| \, &\geq \, \left| \int_{S_\delta(\gamma^i)} e^{i(\overline{\theta} - (n-1)b)}\cos(\lambda(s)) \, ds \right| \geq \, \mathcal{H}^1(S_\delta(\gamma^i))\cos\rho. \label{eq-h1bound}
	\end{align}
	We'd like to use (\ref{eq-h1bound}) to bound $\mathcal{H}^1(S_\delta(\gamma^i))$, so we need to bound $ \left| \int_{S_\delta(\gamma^i)} \dot\gamma^i(s) \, ds \right|$. If $\gamma^i \cap B_\delta$ was a single connected component, this would be simple, as the integral of $\dot\gamma^i$ over $B_\delta$ would then be less than $2\delta$ by the fundamental theorem of calculus. However there may be more connected components to worry about. The following lemma demonstrates that if we widen our ball slightly, we will only have to worry about one connected component. After its proof, we will resume the proof of Lemma \ref{lem-main}.
	
	\begin{lemma}\label{lem-concomp}
		Assume that we have the setup of Lemma \ref{lem-main}. Then for sufficiently small $\delta, \veps$, there exists $N$ such that for all $i>N$, there is only one connected component of $l^i \cap B_{\delta + 3\veps}$ intersecting $B_\delta$.
	\end{lemma}
	\begin{proof}
		We demonstrate that for sufficiently small $\delta$, $\varepsilon$, there exists $N$ such that for all $i>N$:
		\begin{itemize}
			\item[1.] Two distinct sequences of connected components of $l^i \cap B_{\delta + 3\veps}$ intersecting $B_\delta$ can't have different Lagrangian angles in the limit.
			\item[2.] If two distinct sequences of connected components of $l^i \cap B_{\delta + 3\veps}$ intersecting $B_\delta$ have the \textit{same} limiting Lagrangian angle, we can find a third connected component $\zeta^i$ with a different limiting Lagrangian angle.
		\end{itemize}
		Together, these two claims complete the proof.
		
		\textit{Proof of 1.} By (\ref{eq-thma}), there are a finite number of possible limiting Lagrangian angles for these curves, $\{\overline{\theta}_1, \ldots, \overline{\theta}_M\}$. These correspond bijectively to a finite number of possible limiting values for the argument of the tangent vector, $\arg(\dot \gamma^i)$ (see (\ref{eq-rhodef})):
		\[ A = \{\alpha_1, \ldots, \alpha_M\} \, := \, \{\overline{\theta}_1-(n-1)b, \ldots,\overline{\theta}_M-(n-1)b \}.  \] By the almost-calibrated condition, these angles are all different modulo $\pi$, and so any two straight lines representing different angles in $A$ that intersect $B_\delta$ must intersect in a sufficiently large ball. We may therefore choose $R$ large enough such that any two curves $\eta$ and $\xi$ in $B_{R\delta}$ intersecting $B_{\delta}$ such that $\arg(\dot \eta)$ and $\arg(\dot \xi)$ are $\veps$-close to distinct values in $A$ (outside a set of $\mathcal{H}^1$-measure $\veps$) must collide inside $B_{R\delta}$.
		
		Now for a contradiction, assume that, after passing to a subsequence, for all $i$ there exist two distinct connected components $\eta^i$ and $\xi^i$ of $l^i \cap B_{\delta + 3\veps}$ intersecting $B_\delta$ whose Lagrangian angles converge to distinct values $\overline{\theta}_\eta$ and $\overline{\theta}_\xi$. Now extend $\eta^i$ and $\xi^i$ to the connected components in $B_{R\delta}$ that contain them (which may be the same): call these $\overline{\eta}^i$ and $\overline{\xi}^i$. For sufficiently small $\delta$ we can apply the same argument as in the proof (so far) of Lemma \ref{lem-main} and show that, for sufficiently large i, (\ref{eq-rhodef}) holds for $\overline{\eta}^i$ and $\overline{\xi}^i$ in $B_{R\delta}$ outside a set of $\mathcal{H}^1$-measure $\veps$, with the Lagrangian angles $\overline\theta_\eta$ and $\overline\theta_\xi$ respectively. This implies that the connected components must be distinct, but by the choice of $R$, $\overline{\eta}^i$ and $\overline{\xi}^i$ must then collide for $i$ sufficiently large, contradicting embeddedness.
		
		\textit{Proof of 2.} Assume that (after passing to a subsequence) for all $i$ there exist two distinct connected components $\eta^i$ and $\xi^i$ of $l^i \cap B_{\delta + 3\veps}$ intersecting $B_\delta$, and that the Lagrangian angles of $\xi$, $\eta$ converge to the same value $\overline{\theta}$; without loss of generality we assume that $\overline{\theta} - (n-1)b = 0$. 
		
		We first show that $\xi^i$ must enter the ball $B_{\delta + 3\veps}$ on the left-hand side and leave on the right-hand side. Work with a unit-speed parametrisation, $\dot\xi^i(s) = e^{i\lambda(s)}$. Since $\xi^i$ intersects $B_{\delta}$, there is some $s_0$ such that $\xi^i(s_0) = p \in B_{\delta}$. By connectedness,
		\[ \mathcal{H}^1(\{ \xi(s) : s \geq s_0 \}) \, \geq 3\veps. \]
		Therefore by splitting the set into $S_{\delta + 3\veps}$ and $T_{\delta + 3\veps}$ we can calculate the horizontal and vertical distance travelled:
		\begin{align*}
			\int_{\{s \geq s_0\}} \cos(\lambda(s)) \, ds \, &= \, \int_{\{s \geq s_0\} \cap T_{\delta + 3\veps}} \cos(\lambda(s)) \, ds \, + \, \int_{\{s \geq s_0\} \cap S_{\delta + 3\veps}} \cos(\lambda(s)) \, ds  \\
			&\geq \, -\veps + 3\veps\cos(\rho) \geq \tfrac{\veps}{2},\\
			\int_{\{s \geq s_0\}} |\sin(\lambda(s))| \, ds \, &= \, \int_{\{s \geq s_0\} \cap T_{\delta + 3\veps}} |\sin(\lambda(s))| \, ds \, + \, \int_{\{s \geq s_0\} \cap S_{\delta + 3\veps}} |\sin(\lambda(s))| \, ds  \\
			&\leq \, \veps + 3\veps\sin(\rho) \leq 2\veps,
		\end{align*}
		by (\ref{eq-rhodef}) and (\ref{eq-sepsilon}), since by taking $\delta, \varepsilon$ sufficiently small we may make $\rho(\delta,\varepsilon)$ as close to 0 as we like. This shows that $\xi^i$ leaves the ball on the right-hand side, since $p_0$ must be to the left of the exit point, and less than $2\veps$ vertically separated from it. An identical argument shows that $\xi^i$ enters on the left-hand side. The same is true for $\eta^i$.
		
		Now if these were the only connected components, we have the situation depicted in Figure \ref{fig-1}. Since $L^i$ is connected, either $A_R$ joins to $C_L$ or $C_R$ joins to $A_L$. In both situations, one end of the curve must be trapped in a compact region of the plane by embeddedness (the $C_R$ or $A_L$ end in the former case, the $C_L$ or $A_R$ end in the latter), which is a contradiction.
		\begin{figure}
			\centering
			\includegraphics[width=0.4\linewidth]{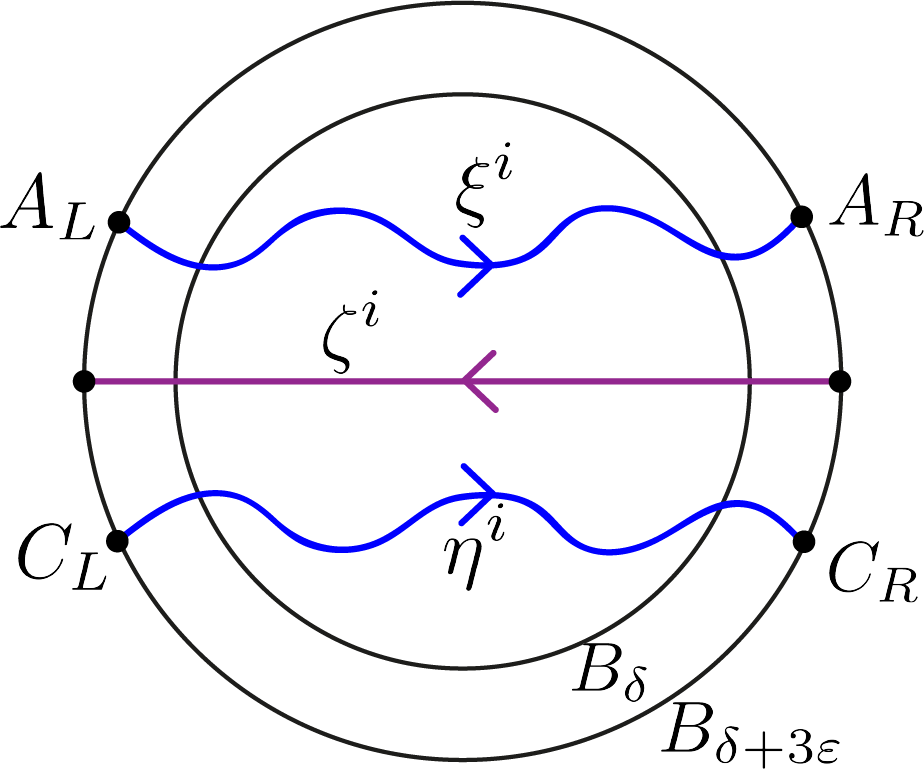}
			\caption[Diagram for the proof of Lemma \ref{lem-concomp}]{Two connected components with the same angle must have another between them.}
			\label{fig-1}
		\end{figure}
		Therefore there must be another connected component $\zeta^i$ in $B_{\delta + 3\veps}$; to solve the above problem it must be a curve from right to left, in the middle of $\xi^i$ and $\eta^i$ (see Figure \ref{fig-1}). By the above argument, since $\zeta^i$ does not enter on the left and leave on the right it must have a different limiting Lagrangian angle, and so the argument is complete.
	\end{proof}
	We now resume the proof of Lemma \ref{lem-main}. Taking $\delta$, $\veps$ sufficiently small, we know by Lemma \ref{lem-concomp} that for sufficiently large $i$, there is only one connected component of $\gamma^i \cap B_{\delta+3\veps}$ that intersects $B_\delta$: call it $\tilde\gamma^i$. Also for sufficiently large $i$, $\mathcal{H}^1(T_{\delta+3\veps}(\tilde\gamma^i)) < \veps$. Using this, (\ref{eq-rhodef}) and (\ref{eq-h1bound}), we estimate for sufficiently large $i$ using a unit-speed parametrisation (suppressing the superscript $i$ and defining $\tilde{\rho}:= \rho(\delta + 3\veps, \veps)$, $\rho := \rho(\delta, \veps)$ for readability):
	\begin{align*}
		\mathcal{H}^1(S_{\delta + 3\veps}(\tilde\gamma) ) \cos\tilde{\rho} \, &\leq \, \left| \int_{S_{\delta + 3\veps}(\tilde\gamma)} \dot{\tilde\gamma}\, ds \right| \, \leq \, \left| \int_{B_{\delta + 3\veps}\cap \tilde\gamma} \dot{\tilde\gamma}\, ds \right| \, + \, \veps \, \leq \, 2\delta + 7 \veps\\
		\implies \mathcal{H}^1(S_{\delta}(\gamma)) \, &\leq \, \mathcal{H}^1(S_{\delta + 3\veps}(\tilde\gamma)) \, \leq \, \frac{2\delta + 7\veps}{\cos\tilde{\rho}}.
	\end{align*}
	Finally, using this and (\ref{eq-sepsilon}) we can estimate our density ratio:
	\begin{align*}
		\frac{\mathcal{H}^1(\gamma \cap B_\delta)}{2\delta} \, &= \, \frac{\mathcal{H}^1(S_\delta(\gamma))}{2\delta} \, + \, \frac{\mathcal{H}^1(T_\delta(\gamma))}{2\delta} \, \leq \, \frac{1}{\cos \tilde{\rho}} \, + \, \frac{\veps}{\delta}\left(\frac{7}{2\cos \tilde{\rho}} + \frac{1}{2} \right).
	\end{align*}
	By (\ref{eq-rhodef}), $\cos \tilde{\rho} = \cos (\rho(\delta + 2\veps, \veps)) = 1 + O(\delta,\veps)$. Therefore, taking $\delta$ and $\tfrac{\veps}{\delta}$ sufficiently small ensures that the density ratio is bounded away from 2. By Lemmas \ref{lem-speciallagcones} and \ref{lem-h2toh1}, we must have that this density ratio converges to an integer, which due to the bound must be 1. Therefore, the limit of the sequence $\Sigma^i$ of connected components is a single plane.
	
	Finally, Lemma \ref{lem-concomp} implies that there are no other connected components of $l^i\cap B_{2\delta}$ intersecting $B_{\frac{\delta}{2}}$, so we have in fact proven that $L^i$ converges to a single Lagrangian plane.
\end{proof}
Now we apply Lemma \ref{lem-main} to get our main results.
\begin{theorem}\label{thm-singorigin}
	Let $L_t$ be an almost-calibrated, connected $O(n)$-equivariant mean curvature flow in $\mathbb{C}^n$ with planar asymptotics. Then if $L_t$ has a finite-time singularity, it must occur at the origin.
\end{theorem}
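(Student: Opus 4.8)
The plan is to argue by contradiction, reducing everything to Lemma \ref{lem-main}. Suppose a finite-time singularity of $L_t$ occurs at a spacetime point $(x_0,T)$ with $x_0\neq O$. Because $L_t$ is $O(n)$-equivariant, every point of $L_t$ lies on an orbit $\{(a\alpha,b\alpha):\alpha\in S^{n-1}\}$, so after applying a fixed element of $O(n)$ we may assume $x_0\in\mathbb{C}\times\{0\}^{n-1}$, i.e.\ that $x_0$ is a point of the profile curve, with $x_0\neq O$ as a point of $\mathbb{C}$. Fix a sequence $\lambda_i\to\infty$ and a negative time $s_0=-\rho^2$, chosen from the full-measure set of times for which Neves' Theorem B (Theorem \ref{thm-b}) applies to the rescaling of the flow about $(x_0,T)$, and set
\[ L^i_s \, := \, \lambda_i\bigl(L_{T+\lambda_i^{-2}s}-x_0\bigr), \qquad L^i \, := \, L^i_{s_0}. \]

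Next I would verify that the sequence $L^i$ meets every hypothesis of Lemma \ref{lem-main}. It is uniformly almost-calibrated, because the Lagrangian angle is unchanged by dilations and translations and its range does not increase along the flow. It is connected, because the flow is smooth on $[0,T)$, so $L_{T+\lambda_i^{-2}s_0}$ is diffeomorphic to $L_0$ for $i$ large. Writing the rescaling map as $z\mapsto\lambda_i(z-x_0)$, the rotation centre $O$ of $L_t$ is sent to $-\lambda_i x_0$, so $L^i-x_ie_1$ is $O(n)$-equivariant with $x_i=-\lambda_i x_0\in\mathbb{C}$, and $|x_i|=\lambda_i|x_0|\to\infty$, so $x_i$ eventually lies outside every $B_d$. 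Finally, the conclusions of Neves' Theorems A and B (Theorems \ref{thm-a} and \ref{thm-b}) hold for this rescaling sequence in $B_1$: $L_0$ is zero-Maslov with bounded Lagrangian angle and is rational by Lemma \ref{lem-embedded}, while planar asymptotics together with Huisken's monotonicity formula (Theorem \ref{thm-monotonicity}) give uniformly bounded area ratios, so after passing to a suitable subsequence of $\lambda_i$ we obtain exactly (\ref{eq-thma}) and (\ref{eq-thmb}) upon restricting test functions to $B_1$. Lemma \ref{lem-main} then produces a single special Lagrangian plane $P$ of multiplicity one such that the underlying Radon measures converge, $\mu_i\rightharpoonup\mu_P$. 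Since a Neves rescaling limit is a backwards self-similar shrinking flow with centre at the origin, $P$ is a cone through $O$, so that $\int_P\Phi_{(0,0)}(\cdot,-\rho^2)\,d\mathcal{H}^n=1$.

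It remains to transfer this density to the original flow. By Lemma \ref{lem-huiskenscaleinvariance}, the Gaussian density ratio $\Theta(L_t,(x_0,T),\rho/\lambda_i)$ equals $\Theta(L^i_s,(0,0),\rho)=\int_{L^i}\Phi_{(0,0)}(\cdot,-\rho^2)\,d\mathcal{H}^n$, and letting $i\to\infty$ the super-polynomial decay of $\Phi_{(0,0)}$ and the uniform area bounds allow us to pass the limit through the integral, so that
\[ \Theta(L_t,(x_0,T),0^+) \, = \, \lim_{i\to\infty}\int_{L^i}\Phi_{(0,0)}(\cdot,-\rho^2)\,d\mathcal{H}^n \, = \, \int_{P}\Phi_{(0,0)}(\cdot,-\rho^2)\,d\mathcal{H}^n \, = \, 1. \]
By upper semicontinuity of the Gaussian density and its monotonicity in $r$, there is then a parabolic cylinder $P_r(x_0,T)$ on which $\Theta(\cdot,r)\le 1+\veps$, so White's Regularity Theorem (Theorem \ref{thm-white}) bounds $|A|$ on $P_{r/2}(x_0,T)$, contradicting that $(x_0,T)$ is a singular point. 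The main obstacle is the bookkeeping of the second step: verifying that Neves' structure theorems genuinely apply at a singular point away from the origin with the required uniform area control, and that the resulting plane $P$ passes through $O$. Once the hypotheses of Lemma \ref{lem-main} are secured, that lemma and the standard density/White argument close the proof with essentially no further input.
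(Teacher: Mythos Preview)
Your proposal is correct and follows essentially the same route as the paper: argue by contradiction, rescale about the putative off-origin singular point, observe that the centre of equivariance is pushed to infinity, feed the sequence into Lemma \ref{lem-main} to force the Type I limit to be a single multiplicity-one plane, and then use White regularity to contradict the existence of a singularity. The only cosmetic difference is in the final step: you pull the density back to the original flow and apply White there, while the paper applies White on the rescaled flows and invokes Theorem \ref{thm-ACLMCFsing} to say the singularity would have to be Type II; the two are equivalent by scale invariance, and your phrasing is arguably cleaner since it does not need the Type II classification.
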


\begin{proof}
	Assume for a contradiction that such a flow has a singularity away from the origin. Without loss of generality, it is at a point $(ae^{ib},0,\ldots,0) \in \mathbb{C}{\times}\{0\}^{n-1}$, since otherwise we may just perform a rotation that leaves the flow unaffected. Note that the planar asymptotics imply uniformly bounded area ratios, and by Lemma \ref{thm-s1lag} the flow is rational. Taking a sequence of rescalings $L^j_s$ around $ae^{ib}$ with factor $\lambda^j$, the conclusions to Theorems \ref{thm-a} and \ref{thm-b} therefore hold for almost all $s$. The centre of rotation for $L^j_s$ is $x_j := -\lambda_jae^{ib}$, whose size diverges to $\infty$. 
	
	We may therefore apply Lemma \ref{lem-main} to conclude that $L^i_s$ converges to a density 1 Lagrangian plane for almost all $s$. This convergence is smooth by White regularity (Theorem \ref{thm-white}), by the following argument. Choosing a space-time point $\overline{X} = (\overline{x},\overline{s})$, for any $X \in P_r(\overline{X})$ we have by Huisken monotonicity (Theorem \ref{thm-monotonicity}):
	\begin{align*}
		\lim_{i\rightarrow \infty}\Theta(L^i,X,r) \, &= \, \lim_{i \rightarrow \infty}\int_{L^i_{s-r^2}} \Phi_X(x,s-r^2) \, d\mathcal{H}^n\,\\
		&\leq \, \lim_{i \rightarrow \infty}\int_{L^i_{\overline{s}-2r^2}} \Phi_X(x,\overline{s}-2r^2) \, d\mathcal{H}^n \leq 1,
	\end{align*} 
	for any $r$ such that $L^i_{\overline{s}-2r^2}$ converges to a density 1 Lagrangian plane. The last inequality holds since $\Phi$ integrates to 1 over a plane including $X$, and less than $1$ on any other plane. It follows that by White regularity that the curvatures are all bounded, giving smooth convergence upon passing to a subsequence by Arzel\`a-Ascoli.
	
	But since the singularity is Type II by Theorem \ref{thm-ACLMCFsing} we should have that the curvature of these rescalings is diverging. So, we have a contradiction.
\end{proof}
Next, we prove the uniqueness of the Type I blowup. We will need the following lemma, which gives bounds on the argument of $l_t$.
\begin{figure}
	\centering
	\includegraphics[scale=0.8]{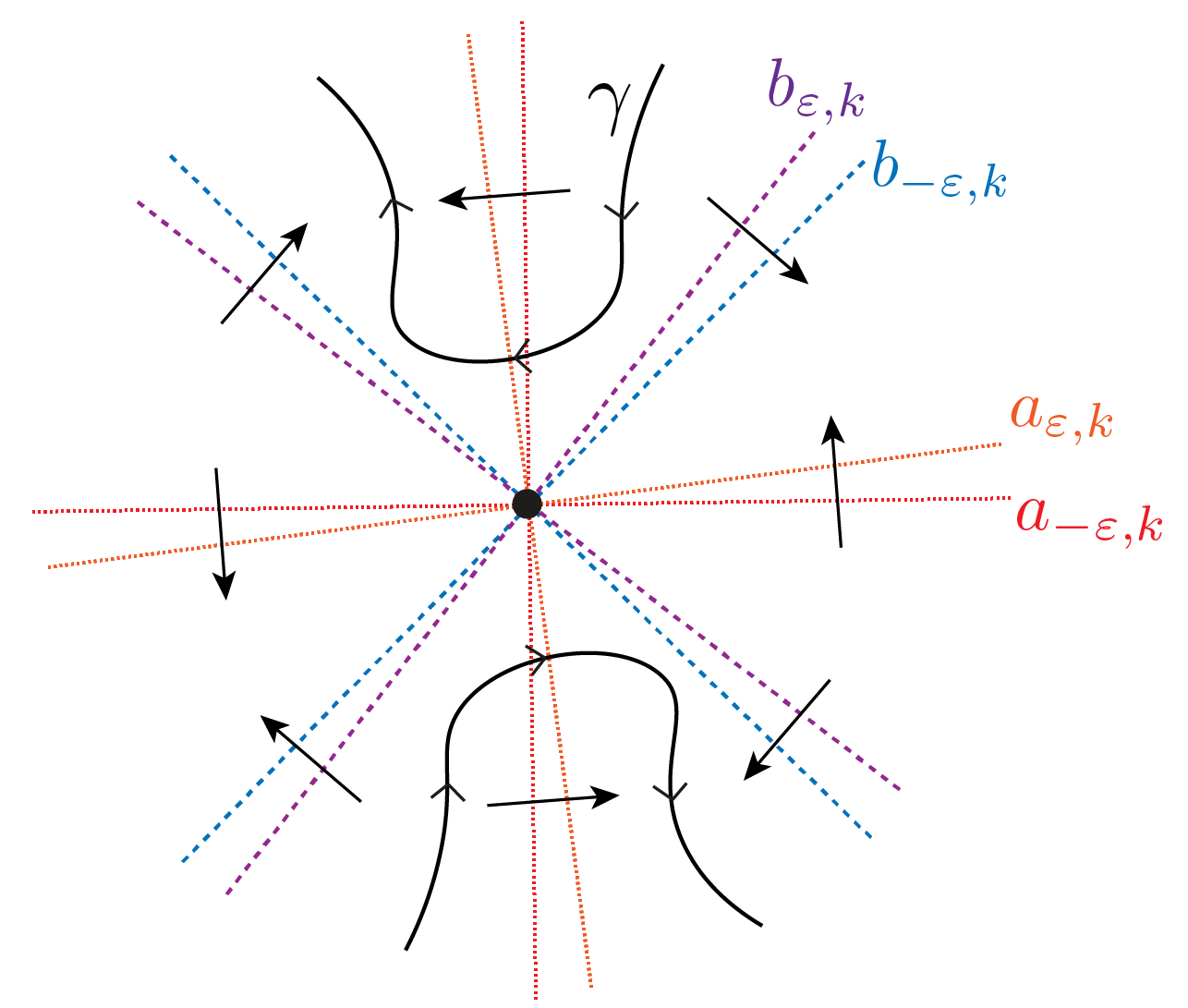}
	\caption[Diagram for the proof of Lemma \ref{lem-cone}]{The half-lines $a_{\varepsilon,k}$, $a_{-\varepsilon,k}$, $b_{\varepsilon,k}$ and $b_{-\varepsilon,k}$ in the proof of Lemma \ref{lem-cone}, in the case where $n=4$.}
	\label{fig-conelemma}
\end{figure}
\begin{lemma} \label{lem-cone}
	Let $L$ be a connected, $O(n)$-equivariant Lagrangian submanifold, with planar asymptotics. Assume that the profile curve $l$ does not contain the origin, and that $L$ is almost-calibrated; explicitly that there exist $\overline{\theta}$ and $\veps$ such that
	\[ \theta \in \left( \overline{\theta} - \frac{\pi}{2} + \veps, \, \overline{\theta} + \frac{\pi}{2} - \veps \right). \]
	Then for a connected component $\gamma$ of $l$, there exists a cone of angular width strictly less than $\frac{2\pi}{n}$ containing $\gamma$.
\end{lemma}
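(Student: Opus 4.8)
The plan is to treat the two ends of $\gamma$ by a direct computation with the Lagrangian angle and to confine $\gamma$ on compact sets by comparing it against a fan of special Lagrangian half-line barriers. First, the ends. By Lemma~\ref{lem-embedded} the profile curve $l$, hence each component $\gamma$, is embedded and non-compact; and since the only $O(n)$-invariant $n$-planes in $\mathbb{C}^n$ have profile curves equal to lines through the origin (cf.\ Lemma~\ref{lem-speciallagcones}), the planar asymptotics force each of the two ends of $\gamma$ to be asymptotic to a ray from the origin, with $r:=|\gamma|\to\infty$ along it. Parametrising $\gamma$ by arc length and fixing a continuous branch $\phi(s):=\arg\gamma(s)$, write $\gamma=re^{i\phi}$; then (\ref{eq-langle}) reads $\theta=n\phi+\psi$ with $\psi:=\arg(\dot r+ir\dot\phi)$. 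One end moves radially outward and the other radially inward as $s$ increases, so along them $\psi\to 0$ and $\psi\to-\pi$ respectively (the unit tangent makes no extra net turn; an extra turn would, by the same computation, make the two limiting values of $\theta$ differ by more than $\pi$, impossible for the single-valued almost-calibrated $\theta$), whence the limits of $\theta$ are $n\alpha_+$ and $n\alpha_--\pi$, where $\alpha_\pm:=\lim_{s\to\pm\infty}\phi(s)$. As $\theta$ takes values in an interval of length $<\pi$, its two limits differ by less than $\pi$, so $n(\alpha_--\alpha_+)\in(0,2\pi)$; in particular the two asymptotic rays are distinct, with directions $\alpha_a$ and $\alpha_b:=\alpha_a+\beta$, $\beta:=\alpha_--\alpha_+\in(0,\tfrac{2\pi}{n})$.

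Second, the barriers. Pick $\veps'>0$ with $\beta+2\veps'<\tfrac{2\pi}{n}$; it then suffices to prove $\gamma\subset\overline W$, where $\overline W:=\{re^{i\phi}:\phi\in[\alpha_a-\veps',\,\alpha_b+\veps']\}$ has angular width $\beta+2\veps'<\tfrac{2\pi}{n}$. For $k=0,\dots,n-1$ let $a_{\pm\veps',k}$ and $b_{\pm\veps',k}$ be the rays from the origin at angles $\alpha_a\pm\veps'+\tfrac{2\pi k}{n}$ and $\alpha_b\pm\veps'+\tfrac{2\pi k}{n}$, as in Figure~\ref{fig-conelemma}. Because rotation by $\tfrac{2\pi}{n}$ shifts the Lagrangian angle of a special Lagrangian plane by $2\pi$, the union of the $n$ planes over the lines carrying $a_{-\veps',0},\dots,a_{-\veps',n-1}$ is a special Lagrangian cone $\Sigma_a$ of Lagrangian angle $n\alpha_a-n\veps'$ (Lemma~\ref{lem-speciallagcones}), and likewise the $b_{\veps',k}$ give a special Lagrangian cone $\Sigma_b$. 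Since $\beta+2\veps'<\tfrac{2\pi}{n}$, these rays interlace around the circle with one $a$-ray and one $b$-ray in every arc of length $\tfrac{2\pi}{n}$; the asymptotic directions $\alpha_a,\alpha_b$ of $\gamma$ lie strictly in the wedge $W$ between $a_{-\veps',0}$ and $b_{\veps',0}$; and, $\gamma$ being a graph over its asymptote lines near each end, $\gamma$ eventually lies in $W$ at both ends.

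Finally, confinement — this is where the real work is. Suppose $\gamma\not\subset\overline W$. Since $\gamma$ lies in $W$ near both ends, $\phi$ attains an interior extremum outside $[\alpha_a-\veps',\alpha_b+\veps']$, say a minimum at $s_1$ with $\phi(s_1)<\alpha_a-\veps'$ and $\dot\phi(s_1)=0$; there $\gamma$ is tangent from one side to the ray of direction $\phi(s_1)$, the motion is radial, and so $\theta(s_1)\equiv n\phi(s_1)\ (\mathrm{mod}\ \pi)$. The plan is to show that such an excursion, together with the interlaced fan $\{a_{\pm\veps',k},b_{\pm\veps',k}\}$ and the behaviour of $\gamma$ at its ends, forces $\gamma$ to cross the profile curve of $\Sigma_a$ and then either to trap an end of $\gamma$ in a bounded region of $\mathbb{C}$ or to self-intersect — contradicting embeddedness of $L$ — where the crossing with the special Lagrangian barrier is controlled via the strong maximum principle for the Lagrangian angle at the tangency together with the almost-calibrated bound, much as in the touching arguments for Lemma~\ref{lem-emb-2} and Theorem~\ref{thm-emb}. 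I expect this last step to be the main obstacle: converting a one-sided tangency of $\gamma$ with a special Lagrangian cone into a genuine contradiction is delicate because $\gamma$ is not itself minimal, and this is precisely why one needs the full rotated, $\veps'$-perturbed fan of asymptote rays of Figure~\ref{fig-conelemma} rather than a single barrier. Granting it, $\gamma\subset\overline W$ and the lemma follows.
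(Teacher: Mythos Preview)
Your proposal is incomplete: you explicitly leave the confinement step unproven (``Granting it, $\gamma\subset\overline W$''), and the mechanism you sketch --- a one-sided tangency of $\gamma$ with a special Lagrangian cone, to be ruled out by a strong maximum principle --- is both the wrong tool and unnecessary. There is no parabolic PDE in sight here; $L$ is a single submanifold, not a flow, so the touching arguments of Lemma~\ref{lem-emb-2} and Theorem~\ref{thm-emb} do not apply.

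The paper's proof bypasses all of this with a single elementary observation that you came close to but did not isolate. Suppose $\gamma$ crosses a radial half-line of argument $\alpha$ in the \emph{anticlockwise} sense, i.e.\ $\arg(\gamma)$ is increasing at the crossing. Then at that point $\arg(\dot\gamma)\in(\alpha,\alpha+\pi)$, and (\ref{eq-langle}) gives
\[
\theta \,=\, (n-1)\alpha + \arg(\dot\gamma) \,\in\, (n\alpha,\,n\alpha+\pi)\pmod{2\pi}.
\]
Now simply choose the half-lines so that this interval is disjoint from the almost-calibrated range $(\overline\theta-\tfrac{\pi}{2}+\veps,\,\overline\theta+\tfrac{\pi}{2}-\veps)$: the half-lines $a_{\pm\veps,k}$ at argument $\tfrac{\overline\theta}{n}+\tfrac{2\pi k}{n}+\tfrac{\pi}{2n}\pm\tfrac{\veps}{n}$ can only be crossed \emph{clockwise}, and the half-lines $b_{\pm\veps,k}$ at argument $\tfrac{\overline\theta}{n}+\tfrac{2\pi k}{n}-\tfrac{\pi}{2n}\pm\tfrac{\veps}{n}$ only \emph{anticlockwise}. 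These one-way gates are interleaved around the origin with alternating orientations, so any arc of $\gamma$ is trapped in a single sector of angular width $\tfrac{\pi}{n}+\tfrac{2\veps}{n}<\tfrac{2\pi}{n}$ between a consecutive $b$-line and $a$-line. No asymptotic analysis of the ends, no barriers, no maximum principle --- just the algebra of (\ref{eq-langle}).

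Your extremum observation ($\dot\phi(s_1)=0$ forces $\theta(s_1)\equiv n\phi(s_1)\pmod\pi$) is a weaker shadow of this: it only constrains $\theta$ modulo $\pi$, which is not enough to contradict almost-calibration directly, and this is exactly why you felt the need to invoke further machinery. Looking at \emph{crossings} rather than \emph{extrema} gives you the sign of $\dot\phi$ and hence pins down $\theta$ modulo $2\pi$, which is what you actually need.
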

\noindent\textit{Proof.}
Consider the following half-lines, for $k \in \mathbb{N}$ (see Figure \ref{fig-conelemma} for a diagram):
\begin{align*}
	\arg(a_{\varepsilon, k}) \, &= \, \tfrac{\overline\theta}{n}\,+\,\tfrac{2\pi k}{n} \, + \, \tfrac{\pi}{2n} \, + \, \tfrac{\varepsilon}{n},\\
	\arg(a_{-\varepsilon, k}) \, &= \, \tfrac{\overline\theta}{n}\,+\,\tfrac{2\pi k}{n} \, + \, \tfrac{\pi}{2n} \, - \, \tfrac{\varepsilon}{n},\\
	\arg(b_{\varepsilon, k}) \, &= \,\tfrac{\overline\theta}{n}\,+\,\tfrac{2\pi k}{n} \, - \, \tfrac{\pi}{2n} \, + \, \tfrac{\varepsilon}{n},\\
	\arg(b_{-\varepsilon, k}) \, &= \,\tfrac{\overline\theta}{n}\,+\,\tfrac{2\pi k}{n} \, - \, \tfrac{\pi}{2n} \, - \, \tfrac{\varepsilon}{n}.
\end{align*}
By the almost-calibrated condition, it can be shown that the curve may only pass through the lines $a_{\varepsilon,k}$ and $a_{-\varepsilon,k}$ in a clockwise direction, and the lines $b_{\varepsilon,k}$ and $b_{-\varepsilon,k}$ in an anticlockwise direction. For example, for a contradiction assume that $\gamma$ passes through $a_{{\varepsilon,k}}$ anticlockwise. Then at that point, by (\ref{eqn-laganglecalc}),
\begin{align*}
	\theta \, &\in \, \left( n\arg(a_{\varepsilon,k}),\,\,n\arg(a_{\varepsilon,k}) + \pi \right) \mod 2\pi\\
	&\equiv \, \left( \overline\theta + \tfrac{\pi}{2}  + \varepsilon,\,\, \overline\theta + \tfrac{3\pi}{2}  + \varepsilon \right) \mod 2\pi,
\end{align*}
which contradicts the almost-calibrated condition.

But now it is clear that the curve must remain in a cone of angle less than $\frac{2\pi}{n}$, as it is constrained by the above lines.
\qed

\begin{theorem}\label{thm-typei}
	Let $L_t$ be an almost-calibrated, connected $O(n)$-equivariant mean curvature flow in $\mathbb{C}^n$ with planar asymptotics.
	Then the Type I blowup of any finite-time singularity is a special Lagrangian cone consisting of a transverse pair of planes $P_1 \cup P_2$ whose profile curves span an angle of $\tfrac{\pi}{n}$, and does not depend on the sequence of rescalings.
\end{theorem}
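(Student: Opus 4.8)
The plan is to combine Neves' structure theorems with the equivariant constraints of Section~\ref{sec-s1}. First I would locate the singularity: by Theorem~\ref{thm-singorigin} it must occur at the origin. Fix any sequence of Type~I rescalings $L^i_s = \lambda_i L_{T+\lambda_i^{-2}s}$, $\lambda_i\to\infty$; since each $L_t$ is $O(n)$-equivariant about the origin so is $L^i_s$, planar asymptotics give uniformly bounded area ratios, and Lemma~\ref{lem-embedded} gives rationality. Hence Neves' Theorems~\ref{thm-a} and~\ref{thm-b} apply to $L^i_s$ for almost every $s<0$: the blowup is a finite union of integral special Lagrangian cones with a rescaling-independent set of Lagrangian angles, and by equivariance together with Lemma~\ref{lem-speciallagcones} each of these cones is a union of equivariant special Lagrangian planes through the origin. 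Thus the profile curve $l^\infty$ of the blowup is a union of lines through the origin with integer multiplicities, corresponding to planes $P_1,\dots,P_N$.

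Next I would pin down the ends. By Lemma~\ref{lem-noncompact} the curve $l_t$ is non-compact with one or two connected components; supposing first that it misses the origin (this can fail only for $n$ odd, and that case is handled analogously using the reflective symmetry at $O$), Lemma~\ref{lem-cone} confines each connected component to a cone of angular width strictly less than $\tfrac{2\pi}{n}$. Being homeomorphic to $\mathbb{R}$, such a component has exactly two ends, each asymptotic to a ray; these rays span an angle strictly less than $\tfrac{2\pi}{n}$, hence lie on two distinct lines, so $L_t$ has exactly two distinct asymptotic planes. Traversing a component from one end to the other, the Lagrangian angle (\ref{eq-langle}) runs from $n\alpha_1+\pi$ (incoming along the ray at angle $\alpha_1$) to $n\alpha_2$ (outgoing along the ray at angle $\alpha_2$); since Theorem~\ref{thm-b} forces the limit of the component to be a special Lagrangian cone with a \emph{single} Lagrangian angle $\overline\theta$, we get $n\alpha_2\equiv\overline\theta\equiv n\alpha_1+\pi\pmod{2\pi}$, so $\alpha_2-\alpha_1\equiv\tfrac{\pi}{n}\pmod{\tfrac{2\pi}{n}}$. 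With the width bound this forces $|\alpha_1-\alpha_2|=\tfrac{\pi}{n}$: the two profile curves span an angle of $\tfrac{\pi}{n}$, and with the orientations induced by the flow both $P_1$ and $P_2$ carry the common Lagrangian angle $\overline\theta$.

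I would then handle multiplicities and conclude. As the blowup is a cone, the compact non-self-similar part of $l_t$ near the origin shrinks to a point under the rescaling and so contributes nothing to $l^\infty$; combined with the multiplicity control discussed below, the limit of each connected component is exactly the union of its two asymptotic rays with multiplicity one, and since the two components of $l_t$ are interchanged by the reflection through the origin, $l^\infty$ is the union of the two full lines, each of multiplicity one, i.e.\ $L^\infty=P_1\cup P_2$. Rescaling independence is then immediate: $\overline\theta=n\alpha_2$ for $\alpha_2$ the fixed angle of an asymptote of $L_t$, so $\overline\theta$, $\alpha_1$, $\alpha_2$, and hence $P_1\cup P_2$, are independent of the chosen sequence.

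The main obstacle is precisely this multiplicity control — ruling out a doubled plane or an extra connected component in the blowup. The degenerate possibility that a single component is asymptotic to both rays of one line is quickly excluded, since then the Lagrangian angle along that component would vary by at least $\pi$, contradicting the almost-calibrated hypothesis; but the general bound requires the equivariant embeddedness of Lemma~\ref{lem-embedded} and an argument of the type of Lemma~\ref{lem-concomp} (only one connected component of $l^i$ meets a small ball for large $i$), carried out with the centre of rotation at the origin and with the bound on $\arg(\dot\gamma^i)$ now coming from Lemma~\ref{lem-cone} and Theorem~\ref{thm-b} rather than from~(\ref{eq-rhodef}).
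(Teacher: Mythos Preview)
Your argument has a fundamental conceptual error: you identify the Type~I blowup planes with the \emph{asymptotic} planes of the original flow $L_t$. These are in general different objects. The Type~I rescaling $L^i_s=\lambda_i L_{T+\lambda_i^{-2}s}$ magnifies the behaviour of the flow in a shrinking ball $B_{R/\lambda_i}$ around the origin just before the singular time; the asymptotic ends of $l_t$ are pushed off to infinity and are not visible in the limit. Concretely, in Neves' example in $\mathbb{C}^2$ with asymptotic angle $\beta=\tfrac{2\pi}{3}$ (Figure~\ref{fig-nevesBetaLarge}), the Type~I blowup consists of two lines spanning $\tfrac{\pi}{2}$, not $\tfrac{2\pi}{3}$. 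Consequently your derivation of $|\alpha_1-\alpha_2|=\tfrac{\pi}{n}$ from the asymptotic Lagrangian angles, and your uniqueness argument ``$\overline\theta=n\alpha_2$ for $\alpha_2$ the fixed angle of an asymptote of $L_t$'', both collapse. (The parenthetical about the curve passing through the origin ``only for $n$ odd'' is also incorrect; parity plays no role.)

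The paper's proof is structurally quite different and works entirely with the local picture. Multiplicity~$>1$ planes are ruled out by translating to a point $ae^{ib}\neq O$ on a putative doubled plane and applying Lemma~\ref{lem-main} (whose hypotheses require the centre of rotation to be away from the origin). One then eliminates the remaining configurations directly: a single unit-density line contradicts the Type~II nature of the singularity via White regularity; two lines arising from different connected components are forced to intersect at the origin by the reflective symmetry, contradicting embeddedness; and three or more lines are ruled out by a careful argument tracking $\gamma_1,\gamma_2,\gamma_3$ in an annulus, using Lemma~\ref{lem-cone} to force $\alpha_1-\alpha_2=\tfrac{\pi}{n}$ and then showing a third curve violates almost-calibration. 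Finally, uniqueness comes from the monotonicity of $\int_{L_t}|\theta-\overline\theta|^2\Phi_{(O,0)}\,d\mathcal{H}^n$ (forcing $\overline\theta$ to be independent of the sequence) together with the observation that only one $\tfrac{\pi}{n}$-pair with angle $\overline\theta$ fits inside the cone of Lemma~\ref{lem-cone}. Your proposal correctly identifies multiplicity control as the crux, but the route to the angle $\tfrac{\pi}{n}$ and to uniqueness must go through the local rescaled flow, not the global asymptotes.
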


\noindent\textit{Proof.}
We will first rule out planes with density greater than 1 in the limit, and then demonstrate that a single transverse pair of planes is the only option. We know from Theorem \ref{thm-singorigin} that the singularity must occur at the origin, therefore the centre of rotation for $L^i_s$ is $O$. We also know by Theorem \ref{thm-a} and Lemma \ref{lem-speciallagcones} that any blowup sequence $L^i_s$ converges subsequentially for almost all $s$ to a finite number of special Lagrangian cones. Fix such an $s$; we suppress the subscript for clarity. 

\begin{figure}
	\centering
	\includegraphics[width=0.5\linewidth]{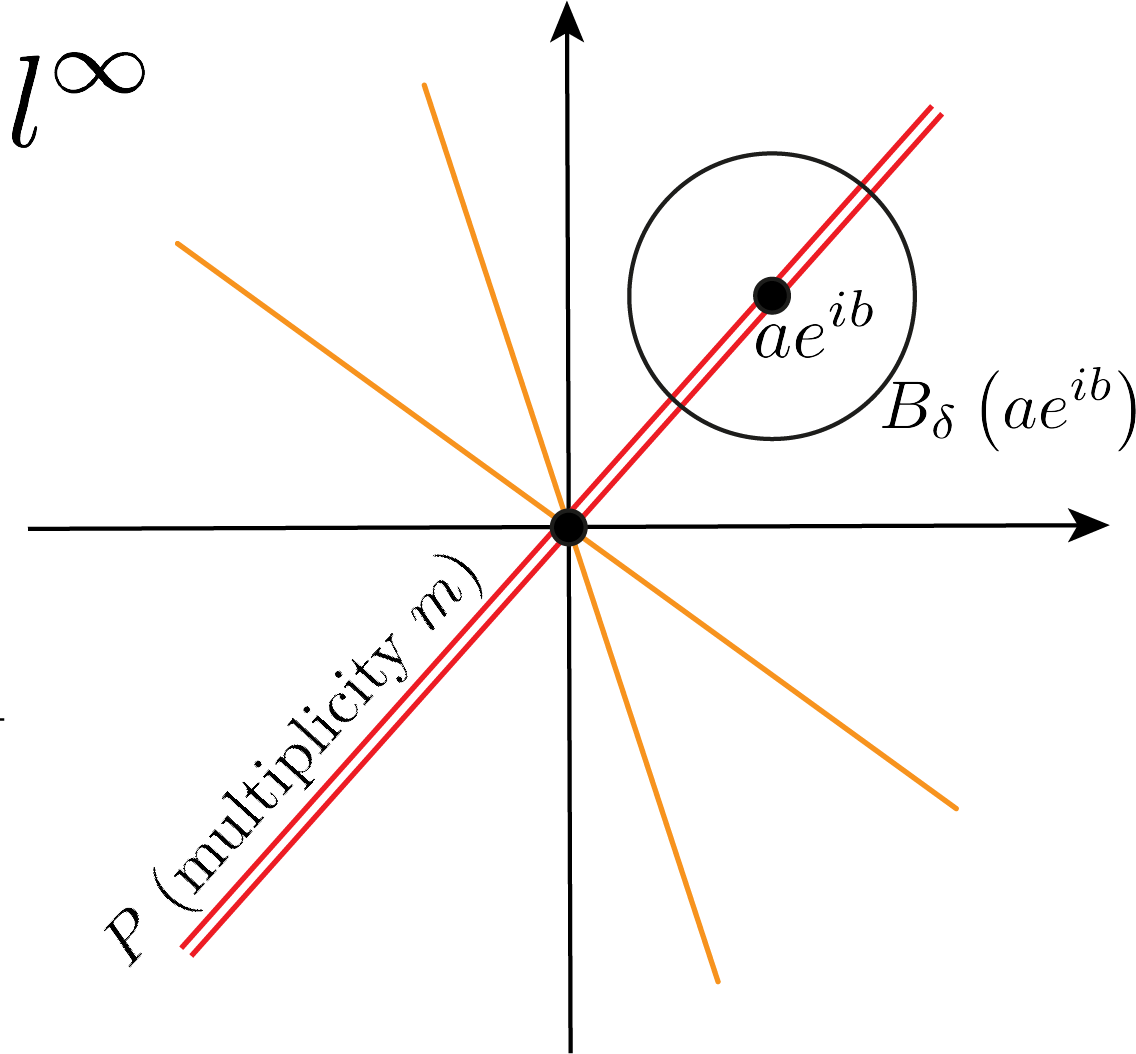}
	\caption[Diagram of a Type I blowup for the proof of Theorem \ref{thm-typei}]{The profile curve $l^{\infty}$ of a Type I blowup in the proof of Theorem \ref{thm-typei}.}
	\label{fig-2}
\end{figure}

Assume that one of the limiting planes, $P$, has a multiplicity $m > 1$. Then there is a point $ae^{ib}$ with $a < \frac{1}{4}$ and $\delta$ small enough such that all other planes in the blowup do not intersect $B_\delta (ae^{ib})$, and so
\[ \lim_{i \rightarrow \infty} \frac{\mathcal{H}^1(l^i \cap B_\delta(ae^{ib}))}{2\delta} \, \rightarrow \, m \]
(see Figure \ref{fig-2}). Now for $2\veps < \delta$, any sequence of connected components of $L^i \cap B_{2\veps}(ae^{ib})$ intersecting $B_{\frac{\veps}{2}}(ae^{ib})$ may be extended to a sequence of connected components of $B_1$ intersecting $B_{\frac{1}{4}}$. These converge to a special Lagrangian in $B_1$ by Theorem \ref{thm-b}, which must be $P$ with some multiplicity.

It therefore follows that the conclusions to Theorems \ref{thm-a} and \ref{thm-b} apply to the flows obtained by translating $ae^{ib}$ to the origin and scaling by $\frac{1}{\delta}$, locally inside the ball $B_1$. We may therefore apply Lemma \ref{lem-main} to the resulting sequence and conclude that $m=1$.

Now we show that a special Lagrangian pair of planes is the only option for the Type I blowup, working with the profile curve throughout. $\xi^i$, $\eta^i$ will denote the profile curves of sequences of different connected components of $L^i \cap B_\delta$ intersecting $B_{\frac{\delta}{4}}$. We will rule out a single line in the limit, 3 or more lines in the limit, and two separate lines coming from different connected components. This will leave the only option as a pair of lines coming from a single sequence of connected components.

\emph{One unit-density line.} Assume $\xi^i$ converges to a unit-density line; by White regularity (Theorem \ref{thm-white}) this convergence is smooth. But then there is no curvature blowup in the Type I rescalings, and by Theorem \ref{thm-ACLMCFsing} the singularity must be Type II, so this is a contradiction.

\emph{Two unit-density lines from different connected components.} Assume $\xi^i$ and $\eta^i$ converge to distinct lines. By White regularity (Theorem \ref{thm-white}) they must converge smoothly to the lines in any annulus, but this means that they must intersect each other at the origin for sufficiently large $i$ by the reflective symmetry, which is impossible since $l^i$ is embedded.

\emph{More than three unit-density lines.} By White regularity (Theorem \ref{thm-white}), we have smooth convergence to the Type I blowup in the annulus $B_\delta \setminus B_{\frac{\delta}{4}}$. Take $N$ sufficiently large, so that for $i > N$ and inside this annulus, the profile curve $l^i$ can be expressed as a graph over the limiting lines.

\begin{figure}
	\includegraphics[width=0.5\linewidth]{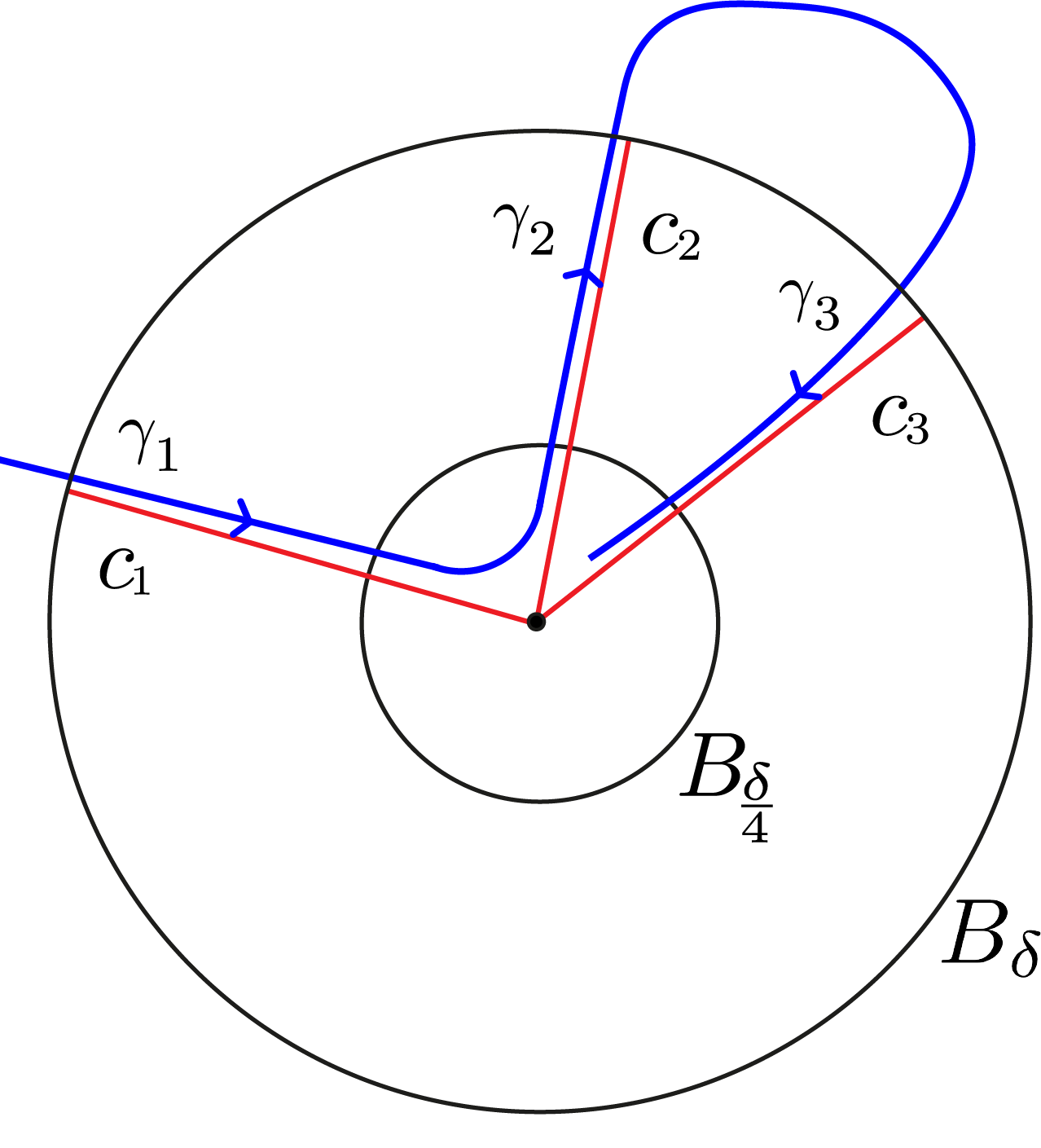}
	\centering
	\caption[Diagram of the curves $\gamma_1,\gamma_2$ and $\gamma_3$ for the proof of Theorem \ref{thm-typei}]{The curves $\gamma_1,\gamma_2$ and $\gamma_3$.}
	\label{fig-im4e}
\end{figure}

Giving $l^i$ an orientation, label the first, second and third connected components of $l^i \cap ( B_\delta {\setminus} B_{\frac{\delta}{4}} ) $ by $\gamma_1$, $\gamma_2$ and $\gamma_3$ respectively (If $l^i$ has two disconnected components, we make this definition using one half of it, $\gamma^i$). By passing to a subsequence, we may assume that these curves always lie over the same limiting half-lines; we denote the limiting half-line over which $\gamma_k$ is a graph by $c_k$, and the argument of $c_k$ by $\alpha_k$. Assume that $\gamma_2$ is clockwise from $\gamma_1$ along $l^i$ (the other case follows by an identical argument). Note that the curve $l^i$ does not pass through the origin between $\gamma_1$ and $\gamma_2$, by considering the reflective symmetry, and so the orientations of $\gamma_1$, $\gamma_2$ and $\gamma_3$ are towards, away from, and towards the origin respectively. (see Figure \ref{fig-im4e}).

Since $\gamma_1$ and $\gamma_2$ are part of the same connected component, the limiting Lagrangian angle must be the same, and since we also have the argument bound from Lemma \ref{lem-cone} it follows that
\begin{equation}
	\alpha_1 - \frac{\pi}{n} = \alpha_2. \label{eq-alpha1alpha2}
\end{equation}
Additionally, the curve $\gamma_3$ cannot be between $\gamma_1$ and $\gamma_2$. If it was, the curve $l^i$ would have to leave $B_{\delta}$ again after $\gamma_3$ between these curves by embeddedness, and since it would be part of the same connected component as $\gamma_3$, it would make an angle of $\frac{\pi}{n}$ with it in the limit. But since the angle between $c_1$ and $c_2$ is $\frac{\pi}{n}$, this would imply that $c_3 = c_1$ or $c_3 = c_2$, and we have ruled out the possibility of double-density lines. It follows that 
\[ \alpha_3 \leq \alpha_2 \leq \alpha_1. \]
By the smooth convergence, for all $\veps$ we may take $N$ large such that if $i > N$, then (keeping the orientation of the curves in mind):
\[ |\arg(\dot{\gamma_1}) - \alpha_1 + \pi| \leq \veps, \quad |\arg(\dot{\gamma_2}) - \alpha_2| \leq \veps, \quad |\arg(\dot{\gamma_3}) - \alpha_3 + \pi| \leq \veps,  \]
\[ |\arg(\gamma_k) - \alpha_k| < \veps. \]
Therefore, denoting the Lagrangian angle of $\gamma_i$ by $\theta_i$,
\begin{align*}
	\theta_1 \, &= \, \arg(\dot{\gamma}_1) + (n-1)\arg({\gamma}_1) \, \geq \, n\alpha_1 - \pi - n\veps, \\ \implies \theta_3 \, &= \,  \arg(\dot{\gamma}_3) + (n-1)\arg(\gamma_3) \, \leq \, n\alpha_3 - \pi + n\veps \, \leq \, \theta_1 - \pi + 2n\veps.
\end{align*}
Taking $\veps$ sufficiently small gives a contradiction to the almost-calibrated condition.

We therefore must have a single pair of lines in the limit, with the same Lagrangian angle $\overline{\theta}$. These lines must span an angle of $\frac{\pi}{n}$, by the same argument that gave (\ref{eq-alpha1alpha2}). 

Finally, we prove uniqueness of the Type I blowup. Assume the singularity happens at the spacetime point $(O,0)$, and consider a convergent Type I blowup sequence $L_s^{\lambda_i}$ corresponding to a time sequence $\{\lambda_i\}_{i=1}^\infty$, which we now know converges to a pair of special Lagrangian planes with a constant Lagrangian angle $\overline\theta$. 

$|\theta-\overline\theta|^2$ is a subsolution to the heat equation. Therefore, by Theorem \ref{thm-monotonicity} it follows that $\int_{L_t} |\theta-\overline\theta|^2 \Phi_{(O,0)} d\mathcal{H}^n$ is a decreasing function of $t$. Additionally, by Theorem \ref{lem-huiskenscaleinvariance},
\begin{equation} \int_{L_{-\lambda_i^{-2}}} |\theta-\overline\theta|^2 \Phi_{(O,0)} d\mathcal{H}^n \, = \, \int_{L_1^{\lambda_i}} |\theta-\overline\theta|^2 \Phi_{(O,0)} d\mathcal{H}^n. \label{eq-thetaintegralrelation}
\end{equation}
By Theorem \ref{thm-a}, this integral must converge to 0 as $i \rightarrow \infty$. Therefore by the monotonicity, $\int_{L_t} |\theta-\overline\theta|^2 \Phi_{(O,0)} d\mathcal{H}^n$ converges to zero along any subsequence; in particular by (\ref{eq-thetaintegralrelation}) any other Type I blowup must have the same Lagrangian angle $\overline\theta$.

Uniqueness of the Type I blowup now follows from the fact that there is only one such pair of lines with Lagrangian angle $\overline{\theta}$ in the cone given by Lemma \ref{lem-cone}, since this cone spans an angle of strictly less than $\frac{2\pi}{n}$. \qed

\begin{remark}
	One can show that the possible Type I blowups of almost-calibrated Lagrangian mean curvature flow form a connected subset of the space of special Lagrangian cones. The uniqueness then follows from the observation that pairs of Lagrangian planes with the same Lagrangian angle are isolated points in this space, without using Lemma \ref{lem-cone}. Similar arguments have been used in minimal surface theory to prove uniqueness of singularity models.
\end{remark}

\subsection{Finite-Time Singularity}

In this section, we analyse how the topology of the flow affects its long-time behaviour. The first possibility is that the curve passes through the origin - in this case we have long-time existence.

\begin{theorem}\label{thm-originlte}
	Let $L_t$ be an almost-calibrated, connected, $O(n)$-equivariant mean curvature flow in $\mathbb{C}^n$ with planar asymptotics. Assume that the profile curve of the initial condition, $l_0$, passes through the origin.
	Then $L_t$ exists for all time.
\end{theorem}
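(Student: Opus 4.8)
\noindent\emph{Proof proposal.} The plan is to argue by contradiction: suppose $L_t$ develops a singularity at a finite time $T$. By Theorem \ref{thm-singorigin} the singularity occurs at the origin, and by Theorem \ref{thm-typei} its Type I blowup is a transverse pair of special Lagrangian planes $P_1\cup P_2$, whose profile curves are two \emph{distinct} lines $\ell_1,\ell_2$ through the origin. Concretely, the proof of Theorem \ref{thm-typei} furnishes a rescaling time $s<0$ and a sequence $\lambda_i\to\infty$ of Type I rescalings centred at the origin such that the rescaled profile curves $l^i:=\lambda_i\,l_{T+\lambda_i^{-2}s}$ converge to $\ell_1\cup\ell_2$ as Radon measures, and in fact smoothly on compact subsets of $\mathbb{C}\setminus\{0\}$ by White's regularity theorem (Theorem \ref{thm-white}), applied exactly as in the proof of Theorem \ref{thm-singorigin} since the density of $\ell_1\cup\ell_2$ is at most $1$ at every point other than the origin. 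I will use four properties of the curves $l^i$: they are embedded (Lemma \ref{lem-embedded}, at the time underlying $l^i$); they are homeomorphic to $\mathbb{R}$ (Lemma \ref{lem-noncompact}, using that $l_t$ passes through the origin for every $t<T$, which holds because $l_0$ does and this is preserved by the flow through equivariance and uniqueness, as in the proof of Lemma \ref{lem-emb-2}); they are invariant under the central reflection $\iota:z\mapsto -z$; and they pass through the origin, since the rescalings are centred there.

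The key elementary fact is that a $\iota$-invariant embedded arc $A\subset\mathbb{C}$ (homeomorphic to a compact interval with distinct endpoints) must contain the origin. Indeed $\iota$ restricts to an involution of $A\cong[0,1]$; as a monotone self-homeomorphism of an interval it is either orientation-preserving, in which case it is the identity and $A$ collapses to a point, or orientation-reversing, in which case it has a fixed point, necessarily the unique fixed point $0$ of $\iota$.

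Now fix $\rho\in(0,1)$. Since $\ell_1\cup\ell_2$ meets the circle $\partial B_\rho$ transversally in exactly four points, smooth convergence near $\partial B_\rho$ shows that for $i$ large $l^i$ meets $\partial B_\rho$ transversally in exactly four points, which by $\iota$-invariance form two antipodal pairs $\{q_1,q_3\}$ and $\{q_2,q_4\}$. Since $l^i\cong\mathbb{R}$ with both ends escaping to infinity, $l^i\cap\overline{B_\rho}$ is a disjoint union of compact embedded arcs whose endpoints lie among $q_1,\dots,q_4$; since each $q_k$ is an endpoint of exactly one such arc (by transversality), there are exactly two arcs. The arc through the origin is $\iota$-invariant (as $\iota$ fixes the origin and preserves $l^i\cap\overline{B_\rho}$), so its two endpoints form an antipodal pair, say $\{q_1,q_3\}$. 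Then the remaining arc has endpoints $\{q_2,q_4\}=\iota(\{q_2,q_4\})$, hence is also $\iota$-invariant, yet does not contain the origin — contradicting the previous paragraph. Therefore no finite-time singularity can occur, and $L_t$ exists for all time.

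The step I expect to demand the most care is the first one: assembling Theorems \ref{thm-singorigin}, \ref{thm-typei} and White regularity so as to guarantee that the rescaled profile curves converge smoothly to $\ell_1\cup\ell_2$ away from the origin while retaining central symmetry and passage through the origin. Once these structural facts are secured, the contradiction is the short parity argument above — which is precisely the topological rigidity that makes the ``profile curve through the origin'' case special: a centrally symmetric embedded curve passing through the origin simply cannot approximate a genuinely transverse configuration of two planes.
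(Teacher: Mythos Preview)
Your proof is correct and follows essentially the same route as the paper: assume a finite-time singularity, invoke Theorems~\ref{thm-singorigin} and~\ref{thm-typei} to get a Type I blowup with profile curve a transverse pair of lines, use White regularity for smooth convergence on an annulus, and then derive a topological contradiction from the four ``ends'' on $\partial B_\rho$ together with the central symmetry and the fact that one arc must pass through the origin. The only cosmetic difference is the endgame: the paper argues that the second arc, joining the remaining antipodal pair, would have to cross the first (a Jordan-arc style obstruction), whereas you observe directly that the second arc is $\iota$-invariant and hence must contain a fixed point of $\iota$, i.e.\ the origin --- a clean and self-contained variant of the same parity obstruction.
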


\begin{proof}
	Assume for a contradiction that there is a finite-time singularity. By Theorem \ref{thm-typei}, the profile curve of any Type I blowup must be a pair of lines. By White regularity, Theorem \ref{thm-white}, we must have smooth convergence of the rescalings in an annulus to this pair of lines - this creates 4 `ends' for the profile curve of each rescaling on the outer boundary of the annulus.
	
	Now in any rescaling, one connected component must go through the origin - and therefore this connected component must account for \emph{opposite} ends. The other two ends can only be joined if the curve is not embedded, and so we have a contradiction.
\end{proof}

The other option is that the profile curve doesn't pass through the origin - then $l_t$ is asymptotic to 2 lines. In this case, whether a singularity forms is dependent on the angle between the asymptotes. The following is a slight generalisation of an argument of Neves \cite{neves:1}.

\begin{lemma}
	If $\frac{2\pi}{n} > \beta > \frac{\pi}{n}$, then the Lagrangian mean curvature flow $L^n_t$ in $\mathbb{C}^n$ with profile curve $\eta_t$ starting at the initial condition with profile curve $\eta_0$ forms a singularity at the origin in finite time.
\end{lemma}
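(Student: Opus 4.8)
The plan is to use a barrier argument together with the monotonicity formula to force a singularity. First I would exhibit an explicit equivariant barrier: a self-shrinking solution whose profile curve is a pair of straight lines through the origin meeting at angle $\frac{\pi}{n}$, i.e. the special Lagrangian cone of Theorem \ref{thm-typei}. This is a shrinking soliton for the equivariant flow by Lemma \ref{lem-speciallagcones} and equation (\ref{eqn-rflow}): since $\theta$ is locally constant on each line, $\theta'=0$ away from the origin, so each line is fixed, and one checks this cone is a weak (Brakke) shrinker for the full flow $L_t$. The key geometric input is that the asymptotic cone of $\eta_0$ spans an angle $\beta > \frac{\pi}{n}$, so we can place such a special Lagrangian cone of planes $P_1\cup P_2$ (with the appropriate Lagrangian angle) strictly \emph{inside} the initial region, disjoint from $L_0$ except asymptotically.

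Next I would run the avoidance principle, Theorem \ref{thm-emb}: since $L_0$ and the cone $P_1\cup P_2$ are initially disjoint and embedded with different asymptotes (here one must be a little careful, as the cone is singular at the origin and is static rather than evolving — so instead I would use a shrinking Lawlor-neck-type barrier, or a static minimal Lawlor neck from Lemma \ref{lem-speciallag}, slightly rescaled to be strictly interior), the two families remain disjoint up to the first singular time. This traps the profile curve $\eta_t$: it cannot cross the fixed pair of lines, so the region it encloses near the origin is pinched into a cone of opening angle forced to stay close to $\frac{\pi}{n} < \beta$. Combined with the fact that the asymptotes are preserved by planar-asymptotics, the curve is squeezed between its fixed asymptotic cone of angle $\beta$ and the interior barrier.

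To actually extract a singularity, I would argue by contradiction: if the flow existed for all time, then by White's regularity theorem (Theorem \ref{thm-white}) applied to suitable rescalings, or by a direct integral estimate, the Gaussian density $\Theta(L, (O,t_0), r)$ at the origin would be forced to exceed $1$ by a definite amount (since the curve wraps around the origin through an angle $\beta > \frac{\pi}{n}$, giving cross-sectional $\mathcal{H}^1$-density at least roughly $\frac{n\beta}{2\pi}\cdot 2 > 1$ via Lemmas \ref{lem-s1convergence2} and \ref{lem-h2toh1}). But long-time existence together with the smooth convergence of the blowdown to a static cone would contradict the strict monotonicity in Huisken's formula (Theorem \ref{thm-monotonicity}) unless the flow were already a cone — which it is not, since it has two distinct planar ends enclosing angle $\beta\neq\frac{\pi}{n}$. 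Alternatively, and more in the spirit of Neves' original argument, I would use the evolution equation (\ref{eqn-rflow}) for the polar radius: the quantity $\min_s r_t(s)$ (the distance from the origin to the profile curve) satisfies a differential inequality forcing it to hit $0$ in finite time, because the integral of $\theta'$ around the curve is controlled below by $n\beta - \pi > 0$, so the curve is pulled toward the origin at a rate bounded below.

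The main obstacle I expect is making the barrier argument rigorous when the natural barrier (the cone $P_1\cup P_2$) is itself singular at the origin: one cannot directly apply the smooth avoidance principle of Theorem \ref{thm-emb}. The fix is to use instead a one-parameter family of smooth Lawlor necks from Lemma \ref{lem-speciallag} as barriers, shrinking their neck size to zero, and to track the enclosed angle carefully as $t\to T_{sing}$; alternatively one localizes near the origin and uses the weak (Brakke flow) avoidance principle, which holds for the level-set flow against any static minimal barrier. The second delicate point is quantifying "the curve wraps through angle $\beta$" in a way that survives to the limit — this is exactly where the lower bound $\beta>\frac{\pi}{n}$ (rather than $\geq$) is used, to keep the density strictly above $1$ and thus rule out the smooth single-plane alternative.
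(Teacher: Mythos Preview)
Your barrier approach has a structural gap: every barrier you propose---the cone $P_1\cup P_2$ and the Lawlor necks of Lemma~\ref{lem-speciallag}---is a \emph{special Lagrangian}, hence a stationary solution of the flow. The avoidance principle against a static barrier only keeps $\eta_t$ on one side; nothing moves toward the origin, so there is no squeezing mechanism. Your density alternative is also incomplete: Gaussian density strictly above $1$ at the origin does not by itself contradict long-time existence (it only says the origin is not a regular point \emph{if} a singularity forms there), and the Huisken monotonicity argument you sketch would need the flow itself to converge after rescaling to a cone, which you have not established.

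The paper's proof is short and direct, and crucially uses that $\eta_0$ is a \emph{specific} curve (Neves' initial condition $\eta_0(s)=(\sin(\pi s/\beta))^{-\beta/\pi}e^{is}$, which the lemma implicitly fixes). First, a Sturmian argument guarantees the polar parametrisation $\eta_t(s)=r_t(s)e^{is}$ persists until a singularity. Then one differentiates the enclosed area
\[
A_{\veps,t}=\tfrac{1}{2}\int_\veps^{\beta-\veps} r_t^2\,ds
\]
using (\ref{eqn-rflow}) to obtain $\dfrac{dA_{\veps,t}}{dt}=\theta_t(\veps)-\theta_t(\beta-\veps)$. Since $\theta_t(s)=ns+\arg(r'+ir)\in(ns,ns+\pi)$, this derivative is bounded above by $2n\veps+\pi-n\beta$, which is a negative constant once $\beta>\pi/n$ and $\veps$ is small. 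Hence the area hits zero in finite time, forcing a singularity; Theorem~\ref{thm-singorigin} places it at the origin. Your aside about the polar evolution and ``the integral of $\theta'$ being controlled below by $n\beta-\pi$'' is exactly the right computation, but the quantity it governs is the enclosed \emph{area}, not $\min_s r_t(s)$: the pointwise evolution of $r_t$ at its minimum involves $\theta'$ at that single point, which you have no sign control on.
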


\begin{proof}
	The equivariant flow $\eta_t$ with initial condition
	\[ \eta_0(s) := \left( \sin(\tfrac{\pi s}{\beta}) \right)^{-\tfrac{\beta}{\pi}}e^{is}\]
	may be expressed in polar form, $r_t(s)e^{is}$, until a singularity forms. This can be proven using a Sturmian theorem - see \cite{neves:1} for details. 
	
	We may then look at the evolution of the area under the curve between angles $\veps$ and $\beta-\veps$, using the evolution equation (\ref{eqn-rflow}):
	\begin{align*}
	A_{\veps, t} \, &:= \, \frac{1}{2} \int_\veps^{\beta-\veps}r_t^2 ds, \quad 
	\frac{d A_{\veps,t}}{dt} \, = \, \int_\veps^{\beta-\veps} \theta_t' ds \,
	= \, \theta(\veps) - \theta(\beta-\veps).
	\end{align*}
	Using the fact that 
	\[ \theta_t(s) \, = \, \arg \left( (r'(s) + ir(s))e^{is} \right) + (n-1)\arg \left( r(s)e^{is} \right) \, = \, ns + \arg(r' + ir), \]
	it follows that $\theta_t(s) \in (ns, ns+\pi)$. Therefore, if $\pi-n\beta < 0$, we may choose $\veps$ sufficiently small such that
	\begin{align*}
	\frac{d A_{\veps,t}}{dt}\,  &< \, 2n\veps + \pi - n\beta \, < \, -C
	\end{align*}
	for a positive constant $C$. It follows that a singularity must form in finite time, and by Theorem \ref{thm-singorigin} it must occur at the origin.
\end{proof}

\begin{theorem}\label{thm-fts}
	Let $L_t$ be an almost-calibrated, connected, $O(n)$-equivariant mean curvature flow in $\mathbb{C}^n$ with planar asymptotics. Assume that the profile curve of the initial condition, $l_0$ does not pass through the origin, and that the angle $\alpha$ between the asymptotes of the profile curve is strictly between $\tfrac{\pi}{n}$ and $\tfrac{2\pi}{n}$.
	Then $L_t$ forms a finite-time singularity.
\end{theorem}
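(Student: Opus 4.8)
The plan is to trap the flow beneath a rescaled copy of Neves' flow from the preceding lemma and let the avoidance principle (Theorem~\ref{thm-emb}) drag it into the origin. Since $l_0$ misses the origin it has two components $\gamma_0$ and $-\gamma_0$, and after a rotation (which leaves the flow unchanged) I may assume $\gamma_0$ is asymptotic to the rays with arguments $0$ and $\alpha$, where $\tfrac{\pi}{n}<\alpha<\tfrac{2\pi}{n}$. Using $\alpha>\tfrac{\pi}{n}$, fix $\beta$ with $\tfrac{\pi}{n}<\beta<\alpha$ and $\psi>0$ with $\psi+\beta<\alpha$, so that the cone $C$ of angular width $\beta$ based at argument $\psi$ lies strictly inside the cone spanned by $\gamma_0$. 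Let $\zeta_0$ be the Neves profile curve $(\sin(\pi s/\beta))^{-\beta/\pi}e^{i(s+\psi)}$, rotated into $C$ and then dilated by a factor $\lambda$ to be fixed shortly; by the preceding lemma (together with invariance of the flow under rotations and parabolic rescaling) the equivariant flow $\zeta_t$ with profile curve $\zeta_0\cup(-\zeta_0)$ develops a singularity at the origin at a finite time $T^{*}$, and by Theorems~\ref{thm-singorigin} and~\ref{thm-typei} its Type~I blowup there is a pair of planes through the origin.

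First I would check the hypotheses of Theorem~\ref{thm-emb}. By planar asymptotics, outside a ball $B_{R_0}$ the curve $\gamma_0$ lies in thin neighbourhoods of the rays $0$ and $\alpha$; since $C$ is a positive angular distance from these, $\gamma_0$ has modulus at most $R_0$ at every argument in $C$. As $\zeta_0$ has modulus $\geq\lambda$ throughout $C$ and sits in $C$, choosing $\lambda>R_0$ makes $\zeta_0$ disjoint from $\gamma_0$ and places $\gamma_0$ below $\zeta_0$ over all of $C$; that $-\zeta_0$ is also disjoint from $\gamma_0$ follows because, by Lemma~\ref{lem-cone}, $\gamma_0$ lies in a cone of width $<\tfrac{2\pi}{n}\leq\pi$ containing $[0,\alpha]$, which is disjoint from the cone of $-\zeta_0$. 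By reflective symmetry $l_0$ is then disjoint from $\zeta_0\cup(-\zeta_0)$; the four asymptotic planes $P_0,P_\alpha,P_\psi,P_{\psi+\beta}$ are pairwise distinct (their arguments are distinct points of $[0,\pi)$), so the two flows have different asymptotes and each of their ends is asymptotic to a different plane. Theorem~\ref{thm-emb} then gives that $l_t$ and $\zeta_t\cup(-\zeta_t)$ stay disjoint on $[0,\min(T_{sing},T^{*}))$, where $T_{sing}$ is the first singular time of $L_t$. The statement ``$\gamma_0$ below $\zeta_0$ over $C$'' says exactly that $\gamma_0$ lies in the component $\Omega_0$ of $\mathbb{C}\setminus\zeta_0$ containing the origin.

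The core of the argument is then a contradiction: assume $T_{sing}>T^{*}$. For $t\in[0,T^{*})$ the arc $\gamma_t$ is connected, disjoint from the properly embedded arc $\zeta_t$ (which, being the Neves flow, stays in polar form and is asymptotic to the two rays bounding $C$), and both vary smoothly; since $\gamma_0\subset\Omega_0$, a continuity argument keeps $\gamma_t$ inside the component $\Omega_t$ of $\mathbb{C}\setminus\zeta_t$ containing the origin. Because $\gamma_t$ has one end on the ray $0$ and the other on the ray $\alpha$, it must meet the central ray $\arg z=\psi+\tfrac{\beta}{2}$ of $C$; lying in $\Omega_t$, its intersection point there has modulus strictly below $\zeta_t(\psi+\tfrac{\beta}{2})$, which by the reflection symmetry of the Neves flow equals $\min_{\zeta_t}|\cdot|$. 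Since the singularity of $\zeta_t$ is at the origin with a pair of planes through the origin as Type~I blowup, $\min_{\zeta_t}|\cdot|\to 0$ as $t\to T^{*}$, hence $\min_{\gamma_t}|\cdot|\to 0$ as well. By $C^1_{\mathrm{loc}}$ convergence of the flow this forces $\gamma_{T^{*}}$ to pass through the origin, and then so does $-\gamma_{T^{*}}$; thus the first time $t^{*}\le T^{*}$ at which a component of $l_t$ touches the origin is a failure of embeddedness at the origin with $t^{*}<T_{sing}$, contradicting Lemma~\ref{lem-emb-2} (and Lemma~\ref{lem-embedded}). Hence $T_{sing}\le T^{*}<\infty$, and by Theorem~\ref{thm-singorigin} the singularity is at the origin.

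The step I expect to be the main obstacle is the geometric bookkeeping of the second paragraph: producing a single rescaled Neves barrier that is simultaneously disjoint from $l_0$ and encloses $\gamma_0$, which relies on the strict inequality $\beta<\alpha$ to nest $C$ strictly inside the cone of $\gamma_0$, together with the uniform modulus bound from planar asymptotics; and, in the third paragraph, upgrading ``$\zeta_t$ has a singularity at the origin'' to the quantitative decay $\min_{\zeta_t}|\cdot|\to 0$, which is where the identification of the Type~I blowup as a pair of planes through the origin is essential.
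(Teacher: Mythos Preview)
Your argument is correct and is the same barrier strategy as the paper's: trap $\gamma_t$ beneath a rescaled, rotated Neves curve using the avoidance principle (Theorem~\ref{thm-emb}) and let the barrier's collapse to the origin force $\min|\gamma_t|\to 0$, which is incompatible with smoothness past $T^*$. One small point to tighten: reflection symmetry of $\zeta_t$ about the central ray makes $r_{\zeta_t}$ even but not necessarily unimodal, so $\min|\zeta_t|$ need not be attained there---however, since $\gamma_t\subset\Omega_t$ avoids the origin (by Lemma~\ref{lem-embedded}) and has ends on either side of $C$, it must cross \emph{every} ray in $C$, giving $\min|\gamma_t|\le\min|\zeta_t|$ directly; and invoking the Type~I classification to get $\min|\zeta_t|\to 0$ is heavier than needed, since this already follows from the singularity of $\zeta_t$ being at the origin.
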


\begin{proof}
	Working with a connected component $\gamma$ of the profile curve and taking $\beta < \alpha$, Lemma \ref{lem-cone} gives us a cone that $\gamma_t$ remains in until a singularity forms. We may then find a scaled and rotated copy of Neves' curve $\eta_0$ that also lies in this cone, further away from $\gamma_0$ than the origin, that does not intersect it. By the avoidance principle for equivariant MCF, Theorem \ref{thm-emb}, under equivariant MCF these curves do not intersect until one forms a singularity. Since $\eta_t$ descends to the origin within the cone, $\gamma_t$ is also forced to the origin; here the curvature blows up and so a singularity must occur. \end{proof}

\subsection{The Type II Blowup}

In this section we analyse the Type II blowup of a singularity of our equivariant LMCF. Since by Theorem \ref{thm-originlte} an initial profile curve through the origin cannot form a finite-time singularity under MCF, we assume throughout this section that the initial profile curve $l$ avoids the origin, and therefore consists of two connected components, $\gamma$ and $-\gamma$.

We first show that any Type II blowup of an LMCF must be a special Lagrangian with the same Lagrangian angle as the Type I blowup, and then we will conclude that the only possibility for a Type II blowup is the Lawlor neck of Lemma \ref{lem-speciallag}. 

\begin{theorem}\label{thm-typeIIangle}
	Let $L_t$ be an almost-calibrated LMCF in $\mathbb{C}^n$ with Lagrangian angle $\theta_t$ that forms a singularity at the space-time point $(O,0)$. Assume that any sequence of Type I rescalings $L^{\sigma_i}_s$ converge as flows to the same special Lagrangian cone $C$, with angle $\overline{\theta}$. 
	
	Let $X_i=(x_i,t_i)$ be a sequence of space-time points such that $(x_i,t_i)\rightarrow (O,0)$, let $\lambda_i \in \mathbb{R}$ satisfy $-\lambda_i^2 t_i \rightarrow \infty$, and define the rescalings
	\[ L^{X_i,\lambda_i}_\tau \, := \, \lambda_i\left( L_{t_i+\lambda_i^2 \tau} - x_i \right) \]
	with Lagrangian angle $\theta_\tau^i$.
	
	Then for any bounded parabolic region $\Omega \times I \subset \mathbb{C}^n \times \mathbb{R}$,
	\begin{equation} 
		\theta^i_\tau(\chi_i) \rightarrow \overline\theta \quad \mbox{uniformly in } \Omega\times I, \label{eq-typeIIangle}
	\end{equation}
	where $\tau \in I$ and $\chi_i \in L^{X_i,\lambda_i}_\tau \cap \Omega $ is any sequence of points.
\end{theorem}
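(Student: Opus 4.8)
The plan is to extract the uniform convergence from Huisken's monotonicity formula together with one global fact: the argument at the end of the proof of Theorem~\ref{thm-typei} (which uses only uniqueness of the Type I blowup and Neves' Theorem~A (Theorem~\ref{thm-a}), not equivariance) shows that under the present hypotheses
\[ \int_{L_t} |\theta_t - \overline\theta|^2\, \Phi_{(O,0)}(\cdot, t)\, d\mathcal{H}^n \;\longrightarrow\; 0 \qquad \text{as } t \to 0^- . \]
The rescalings $L^{X_i,\lambda_i}_\tau$ are again mean curvature flows with the same bounded area ratios, the Lagrangian angle $\theta^i_\tau$ still solves the heat equation along them, and by the almost-calibrated condition $\overline\theta$ lies in the closure of the range of $\theta$, so that $u^i := |\theta^i_\tau - \overline\theta|^2$ is a bounded, non-negative subsolution: $(\partial_\tau - \Delta)u^i = -2|\nabla\theta^i|^2 \le 0$. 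The first step is a pointwise bound coming straight from the weighted monotonicity formula (the second part of Theorem~\ref{thm-monotonicity}) applied with $f = u^i$: the quantity $r \mapsto \int_{L^{X_i,\lambda_i}_{\tau - r^2}} u^i\, \Phi_{(\chi,\tau)}(\cdot, \tau - r^2)\, d\mathcal{H}^n$ is non-decreasing in $r$, and letting $r \to 0$ — using that a smooth flow has Gaussian density exactly $1$ at each of its own points — its limit is $u^i(\chi, \tau)$. Hence, for every $r > 0$ and every $\chi \in L^{X_i,\lambda_i}_\tau$,
\[ |\theta^i_\tau(\chi) - \overline\theta|^2 \;\le\; \int_{L^{X_i,\lambda_i}_{\tau - r^2}} |\theta^i - \overline\theta|^2\, \Phi_{(\chi,\tau)}(\cdot, \tau - r^2)\, d\mathcal{H}^n . \]

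Next I would transplant this estimate to the original flow. Fix $r > 0$ and a bounded parabolic region $\Omega \times I \subset B_R \times [-T_0, T_0]$. Undoing the rescaling about $(x_i, t_i)$, the scale invariance of Lemma~\ref{lem-huiskenscaleinvariance} rewrites the right-hand side as $\int_{L_{s_i}} |\theta - \overline\theta|^2\, \Phi_{(y_i, \sigma_i)}(\cdot, s_i)\, d\mathcal{H}^n$, where $y_i := x_i + \lambda_i^{-1}\chi$, $\sigma_i := t_i + \lambda_i^{-2}\tau$, and $s_i := t_i + \lambda_i^{-2}(\tau - r^2)$. Since $\chi$ and $\tau$ range over bounded sets while $|\lambda_i| \to \infty$ and $(x_i, t_i) \to (O,0)$, we have $(y_i, \sigma_i) \to (O,0)$ and $s_i \to 0$, uniformly in $(\chi, \tau)$, and for $i$ large these times are negative. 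Fixing $t'' < 0$, for $i$ large both $s_i$ and $\sigma_i$ exceed $t''$, so the monotonicity formula (the weighted integral is non-increasing in time for a non-negative subsolution) gives
\[ \int_{L_{s_i}} |\theta - \overline\theta|^2\, \Phi_{(y_i,\sigma_i)}(\cdot, s_i)\, d\mathcal{H}^n \;\le\; \int_{L_{t''}} |\theta - \overline\theta|^2\, \Phi_{(y_i,\sigma_i)}(\cdot, t'')\, d\mathcal{H}^n . \]
As $i \to \infty$ the kernels $\Phi_{(y_i,\sigma_i)}(\cdot, t'')$ tend to $\Phi_{(O,0)}(\cdot, t'')$, uniformly on compacta and in $(\chi,\tau)$, and are bounded by a fixed Gaussian; with $|\theta - \overline\theta| < \pi$ and the area-ratio bound for $L_{t''}$, dominated convergence gives
\[ \limsup_{i \to \infty}\ \sup_{\tau \in I,\ \chi \in L^{X_i,\lambda_i}_\tau \cap \Omega} |\theta^i_\tau(\chi) - \overline\theta|^2 \;\le\; \int_{L_{t''}} |\theta - \overline\theta|^2\, \Phi_{(O,0)}(\cdot, t'')\, d\mathcal{H}^n . \]
The left-hand side does not depend on $t''$, so letting $t'' \to 0^-$ and using the first display makes it vanish; this is exactly (\ref{eq-typeIIangle}).

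The step that needs care is the pointwise bound in the first paragraph, precisely because $\Omega \times I$ lies in the Type~II regime, where the curvature of the rescalings is not controlled and one cannot argue via smooth subconvergence of the flows. Deriving the bound directly from the monotonicity formula circumvents this: the only local input is that each rescaling is a smooth flow near the relevant point, hence has Gaussian density $1$ there, and the formula itself requires only the area-ratio bound that is already in force. The remaining work is purely a matter of keeping the two limits — $i \to \infty$ and $t'' \to 0^-$ — uniform over $\chi \in \Omega$ and $\tau \in I$.
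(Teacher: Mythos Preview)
Your argument is correct and uses the same ingredients as the paper --- the pointwise bound from the weighted monotonicity formula for the subsolution $|\theta-\overline\theta|^2$, scale invariance (Lemma~\ref{lem-huiskenscaleinvariance}), and the decay of $\int_{L_t}|\theta-\overline\theta|^2\Phi_{(O,0)}\,d\mathcal H^n$ coming from the uniqueness of the Type~I blowup.  The organization differs: the paper introduces an intermediate Type~I scale $\sigma_i$ (chosen so that $\sigma_i \to \infty$, $\sigma_i^2 t_i \to 0$, $\sigma_i x_i \to 0$, $\sigma_i/\lambda_i \to 0$) and transfers the estimate to the Type~I rescaled flow $L^{\sigma_i}_{-1}$, whereas you transfer directly to the original flow at a \emph{fixed} earlier time $t''$ and only afterwards send $t'' \to 0^-$.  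Your route is a genuine streamlining --- it replaces the somewhat delicate explicit choice of $\sigma_i$ by a clean two-step limit --- while the paper's route has the minor advantage of landing on a single fixed time slice $s=-1$ of a sequence of flows, which plugs directly into the convergence statement of Theorem~\ref{thm-a}.  Both arrive at the same conclusion by the same mechanism.
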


\begin{proof}
	We will be considering the following three flows:
	\begin{itemize}
		\item $L_t$, the original LMCF
		\item $L_s^{\sigma_i}$, the Type I rescaled LMCF with factor $\sigma_i$ ($\sigma_i$ to be explicitly decided later)
		\item $L_\tau^{X_i,\lambda_i}$, the LMCF rescaled around $X_i = (x_i,t_i)$ with factor $\lambda_i$.
	\end{itemize}
	The time variables $t,s,\tau$ are related by
	\[ s = \sigma_i^2 t, \, \, \, \tau = \lambda_i^2(t - t_i) = -t_i\lambda_i^2 \left(1 - \frac{s}{\sigma_i^2 t_i}\right). \]
	
	For a suitable choice of $\sigma_i$, the result (\ref{eq-typeIIangle}) can be shown using the following sequence of steps. For all $\veps$, there exists $N$ independent of $\tau, (\chi_i)_{i=1}^\infty$ such that for all $i \geq N$:
	\begin{align}
		|\theta^i_\tau(\chi_i)-\overline\theta|^2 \, &= \, 
		\int_{L_{\tau}^{X_i,\lambda_i}} |\theta - \overline{\theta}|^2 \, \Phi_{(\chi_i,\tau)} d\mathcal{H}^2 \\
		&\leq \, \int_{L_{(-t_i\lambda_i^2)(1 + t_i^{-1}\sigma_i^{-2})}^{X_i,\lambda_i}} |\theta - \overline{\theta}|^2 \, \Phi_{(\chi_i,\tau)} d\mathcal{H}^2 \label{eqn-1} \\
		&= \, \int_{L_{-\sigma_i^{-2}}} |\theta - \overline\theta|^2\Phi_{\left( \lambda_i^{-1}\chi_i + x_i,\, \lambda_i^{-2}\tau + t_i \right)} d\mathcal{H}^2 \notag\\
		&= \, \int_{L_{-1}^{\sigma_i}} |\theta - \overline{\theta}|^2 \, \Phi_{\left( \sigma_i\left( \lambda_i^{-1}\chi_i + x_i \right),\sigma_i^2\left( \lambda_i^{-2}\tau + t_i\right)\right) } d\mathcal{H}^2 \label{eqn-2} \\
		&\leq \, \int_{L_{-1}^{\sigma_i}} |\theta - \overline{\theta}|^2 \, \Phi_{(0,0)} d\mathcal{H}^2  \label{eqn-3} \, + \, \tfrac{\veps}{2}\\
		&\leq \, \veps. \label{eqn-4}
	\end{align}
	
	The idea is that we have convergence of the Type I rescalings $L_{-1}^{\sigma_i}$, as well as convergence of their Lagrangian angles. To change to an integral over $L_{-1}^{\sigma_i}$, we first change to an integral over $L_{\overline\tau}^{X_i,\lambda_i}$, for a suitable choice of $\overline\tau$, using Huisken monotonicity. \\
	
	We now proceed to justify each of these steps. To prove (\ref{eqn-1}), notice that $|\theta - \overline{\theta}|^2$ is a subsolution to the heat equation. Also, since by assumption $t_i \lambda_i^2 \rightarrow -\infty$ as $i \rightarrow \infty$, if we pick our $\sigma_i$ such that $\sigma_i \, < \, \frac{1}{2\sqrt{-t_i}}$ then \[  (-t_i \lambda_i^2)\left( 1 + \tfrac{1}{t_i \sigma_i^2} \right) \rightarrow -\infty \] for sufficiently large $i$. In particular, this quantity is eventually less than $\inf(I)$, so we may pick a uniform $N$ such that for any $i \geq N$, \[  (-t_i \lambda_i^2)\left( 1 + \tfrac{1}{t_i \sigma_i^2} \right) \leq \tau \] for all $\tau \in I$. Then we can directly apply Huisken's monotonicity formula, Theorem \ref{thm-monotonicity}. \\
	
	To prove (\ref{eqn-2}), we relate the integral over the Type II rescaling to the integral over the Type I rescaling. To do this, we apply Lemma \ref{lem-huiskenscaleinvariance} twice to relate the integral to the original flow at time $-\sigma_i^{-2}$, and then to the Type I rescaled flow at time $-1$.\\
	
	To prove (\ref{eqn-3}), we show that we can replace our spacetime-shifted heat kernel with the stationary one at $(0,0)$. As long as \[\left(\sigma_i\left( \lambda_i^{-1} \chi_i + x_i \right),\sigma_i^2\left( \lambda_i^{-2}\tau + t_i\right) \right) \rightarrow (0,0),\] we get smooth convergence of $\Phi_{\left(\sigma_i\left( \lambda_i^{-1} \chi_i + x_i \right),\sigma_i^2\left( \lambda_i^{-2}\tau + t_i\right) \right)}$ to $\Phi_{(0,0)}$. Then by Theorem \ref{thm-b} and the Type I blowup assumption, it follows that:
	\begin{align*}
		\left| \int_{L_{-1}^{\sigma_i}} |\theta - \overline{\theta}|^2 \left( \Phi_{\left(\sigma_i\left( \lambda_i^{-1}\chi_i + x_i \right),\sigma_i^2\left( \lambda_i^{-2}\tau + t_i\right) \right)} - \Phi_{(0,0)}\right) d\mathcal{H}^2  \right| \,\\
		\leq \, \left| \Phi_{\left(\sigma_i\left( \lambda_i^{-1}\chi_i + x_i \right),\sigma_i^2\left( \lambda_i^{-2}\tau + t_i\right) \right)} - \Phi_{(0,0)}\right|_{\infty} \cdot \int_{L_{-1}^{\sigma_i}} |\theta - \overline{\theta}|^2  d\mathcal{H}^2 \,
		\longrightarrow 0 .
	\end{align*}
	Convergence of the space-time points $\left(\sigma_i\left( \lambda_i^{-1}\chi_i + x_i \right),\sigma_i^2\left( \lambda_i^{-2}\tau + t_i\right) \right)$ will follow if we pick our $\sigma_i$ correctly. For example, 
	\[ \sigma_i \, := \, \frac{1}{2}\min \left \{\frac{1}{\sqrt[4]{-t_i}}, \frac{1}{\sqrt{x_i}}, \sqrt{\lambda_i} \right \} \]
	will work for this step and for step 1. All stated convergences are uniform in $\chi_i$ and $\tau$, since $\Omega$ and $I$ are bounded regions. Finally, (\ref{eqn-4}) follows from Theorem \ref{thm-b}, just as in step (\ref{eqn-3}).
\end{proof}

In particular, $-t_iA_i^2 \rightarrow \infty$ for a Type II singularity. Since the Type II rescalings converge smoothly to the limiting flow, and the Lagrangian angles of the Type I rescalings converge to $\overline\theta$ by Theorem \ref{thm-typei}, Lemma \ref{thm-typeIIangle} implies that any Type II blowup of our flow must be a special Lagrangian.

\begin{corollary}\label{cor-typeii}
	Let $L_t$ be an almost-calibrated, connected, equivariant Lagrangian MCF with Lagrangian angle $\theta_t$ that forms a singularity at the space-time point $(O,0)$. Assume that any sequence of Type I rescalings $L^{\sigma_i}_s$ converge subsequentially as flows to the same special Lagrangian cone $C$, with angle $\overline{\theta}$.
	
	Then any sequence of Type II rescalings, $L_\tau^{(x_i,t_i)}$ converges subsequentially in $C^{\infty}_{loc}$ to a special Lagrangian, with Lagrangian angle $\overline\theta$.
\end{corollary}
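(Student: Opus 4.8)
The plan is to feed the standard Type II blow-up convergence into Theorem \ref{thm-typeIIangle}, which controls the Lagrangian angle, and then to observe that a limiting flow with constant Lagrangian angle is static.

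First I would recall the construction of the Type II blow-up. Choosing $t_k$ and $p_k$ by the maximising condition (\ref{eq-typeiicondition}), setting $A_k := |A_{t_k}(p_k)| = \max_{L_{t_k}}|A|$ and $x_k := L_{t_k}(p_k)$, the parabolic rescalings
\[
L_\tau^{(x_k,t_k)} \, := \, A_k\left( L_{t_k + A_k^{-2}\tau} - x_k \right)
\]
are well known to converge, after passing to a subsequence, in $C^{\infty}_{loc}$ to an eternal flow $L^\infty_\tau$ with $\sup_{\text{spacetime}}|A| = 1$ (see e.g. \cite{mantegazza:1}). Since the singularity is at the origin we have $x_k \to O$ and $t_k \to 0$, and the Type II condition forces $A_k^2(-t_k) \to \infty$, as recorded just before the statement of this corollary. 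Moreover the hypothesis of the corollary is precisely the hypothesis of Theorem \ref{thm-typeIIangle}: every sequence of Type I rescalings converges as a flow to the same special Lagrangian cone $C$ with angle $\overline\theta$.

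Next I would apply Theorem \ref{thm-typeIIangle} with $X_k = (x_k, t_k)$ and $\lambda_k = A_k$. The requirements $-\lambda_k^2 t_k \to \infty$ and $(x_k,t_k) \to (O,0)$ hold by the previous paragraph, so the theorem gives: for every bounded parabolic region $\Omega \times I$ there is an $N$, independent of $\tau \in I$ and of the choice of points, such that $|\theta^k_\tau(\chi_k) - \overline\theta| \to 0$ for any sequence $\chi_k \in L_\tau^{(x_k,t_k)} \cap \Omega$. Fixing $\tau$ and a point $p \in L^\infty_\tau$, the $C^{\infty}_{loc}$ convergence produces points $\chi_k \in L_\tau^{(x_k,t_k)}$ with $\chi_k \to p$ and $\theta^k_\tau(\chi_k) \to \theta^\infty_\tau(p)$ — here we use that almost-calibrated flows are zero-Maslov, so the angles are genuine single-valued functions and pass to the limit — and comparing with the theorem gives $\theta^\infty_\tau(p) = \overline\theta$. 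As $p$ and $\tau$ were arbitrary, the limiting eternal flow has Lagrangian angle identically equal to $\overline\theta$.

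Finally, by Theorem \ref{thm-lagmcf} the mean curvature of $L^\infty_\tau$ is $\vec{H} = J\nabla\theta \equiv 0$, so every time-slice is a special Lagrangian and the eternal flow is static; thus the Type II blow-up is a special Lagrangian with Lagrangian angle $\overline\theta$, as claimed. The one step that genuinely carries the weight of the argument is the verification that $-A_k^2 t_k \to \infty$, since this is exactly what licenses the application of Theorem \ref{thm-typeIIangle}; it is the content of the Type II dichotomy, and is precisely what fails in the Type I case, where this quantity stays bounded. Everything else is bookkeeping that repackages convergence statements already established.
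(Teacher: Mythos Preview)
Your proof is correct and follows exactly the approach the paper takes: the paper's proof is the single paragraph preceding the corollary, which notes that $-t_iA_i^2 \to \infty$ for a Type II singularity, invokes the smooth convergence of the Type II rescalings, and applies Theorem \ref{thm-typeIIangle} to conclude the limit is special Lagrangian with angle $\overline\theta$. You have simply written out the details of that argument more carefully, including the passage of the angle to the limit via $C^\infty_{loc}$ convergence and the use of $\vec H = J\nabla\theta$ to see the limit is static.
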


We are now ready to prove that any Type II blowup of our equivariant flow must be the unique Lawlor neck with asymptotic planes $P_1 \cup P_2$.

\begin{theorem}\label{thm-typeii}
	Let $L_t$ be an almost-calibrated, connected, $O(n)$-equivariant mean curvature flow in $\mathbb{C}^n$ with planar asymptotics. Then up to a translation, a Type II blowup of any finite-time singularity is a Lawlor neck $\Sigma_{\operatorname{Law}}$ with the same Lagrangian angle as the (unique) Type I blowup $P_1 \cup P_2$, and is asymptotically planar with asymptotes $P_1$ and $P_2$.
	Additionally, the Type II blowup does not depend on the rescaling sequence.
\end{theorem}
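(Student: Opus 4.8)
\textit{Proof plan.} The plan is to feed Theorem \ref{thm-typei} into Corollary \ref{cor-typeii} and then combine the resulting convergence with the classification of equivariant special Lagrangians in Lemma \ref{lem-speciallag}. By Theorem \ref{thm-typei} the Type I blowup is unique --- a fixed transverse pair of special Lagrangian planes $P_1\cup P_2$ with common Lagrangian angle $\overline\theta$ and profile curves meeting at angle $\tfrac{\pi}{n}$ --- so the hypothesis of Corollary \ref{cor-typeii} is met, and any Type II rescaling sequence converges subsequentially in $C^{\infty}_{loc}$ to a smooth special Lagrangian $\Sigma$ of constant Lagrangian angle $\overline\theta$. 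Being special Lagrangian, $\Sigma$ is minimal, so the limiting eternal flow is static and $\Sigma$ is a single submanifold; and by the normalisation built into the Type II rescaling, $\max_\Sigma|A|^2=1$, attained at the image of the sequence of maximal-curvature points.

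Next I would pin down the symmetry of $\Sigma$. Writing $x_k$ for the maximal-curvature point at time $t_k$ and $A_k$ for the rescaling factor, each rescaling is an $O(n)$-equivariant submanifold with centre $-A_kx_k$, and Lemma \ref{lem-s1convergence} gives two cases according to whether $|A_kx_k|$ stays bounded. If $|A_kx_k|\to\infty$, then (by Lemmas \ref{lem-s1convergence}--\ref{lem-s1convergence1} and the remarks on invariance of the limiting support and multiplicity) $\Sigma$ is invariant under all translations in the $\{0\}\times\mathbb{R}^{n-1}$ directions, hence splits isometrically as $\Sigma_0\times\mathbb{R}^{n-1}$ with $\Sigma_0\subset\mathbb{C}$ a minimal curve; but then $\Sigma_0$ is a straight line and $|A_\Sigma|\equiv 0$, contradicting $\max_\Sigma|A|^2=1$. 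Therefore $|A_kx_k|$ is bounded and, after passing to a subsequence, $\Sigma$ is $O(n)$-equivariant about a centre $c=-\lim A_kx_k$. Moreover $\Sigma$ is connected, being generated by the single connected arc of the profile curve near the pinch (equivalently, by Lemma \ref{lem-noncompact}, since the equivariant $\Sigma$ is diffeomorphic to $\mathbb{R}^n$ or $\mathbb{R}\times S^{n-1}$).

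Translating $\Sigma$ by $-c$ so that it is centred at the origin, I would apply Lemma \ref{lem-speciallag}: its profile curve is either a line through the origin or the Lawlor parametrisation $r(\alpha)=B/\sqrt[n]{\sin(\overline\theta-n\alpha)}$. A line forces $|A|\equiv 0$, again contradicting $\max|A|^2=1$, and a reducible union of lines through the origin would make $\Sigma$ singular at the origin, contradicting smoothness of the Type II blowup; so $\Sigma-c$ is a Lawlor neck. Among Lawlor necks of Lagrangian angle $\overline\theta$ --- a one-parameter family in the scale $B$ --- exactly one has $\max|A|^2=1$; this is $\Sigma_{\operatorname{Law}}$, and $\Sigma=\Sigma_{\operatorname{Law}}+c$. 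For the asymptotes, reading off the formula, $r\to\infty$ exactly as $\alpha\to\tfrac{\overline\theta}{n}$ and $\alpha\to\tfrac{\overline\theta}{n}-\tfrac{\pi}{n}$, so the tangent cone of $\Sigma_{\operatorname{Law}}$ at infinity is the transverse pair of special Lagrangian planes in those two directions, each of Lagrangian angle $\overline\theta$ (with suitable orientations) and with profile curves at angle $\tfrac{\pi}{n}$; by the uniqueness clause of Theorem \ref{thm-typei} this pair is precisely $P_1\cup P_2$, and since a bounded translation leaves the tangent cone at infinity unchanged, $\Sigma$ is asymptotically planar with asymptotes $P_1,P_2$. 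Independence of the rescaling sequence is then immediate, as $\overline\theta$, $P_1$, $P_2$ and hence $\Sigma_{\operatorname{Law}}$ are all determined by the (unique) Type I blowup.

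I expect the main obstacle to be the second paragraph: correctly controlling the centre of rotation $x_k$ and, in particular, excluding the translation-invariant (escaping-centre) alternative. This is exactly where the almost-calibrated hypothesis does essential work, since it is Corollary \ref{cor-typeii} --- forcing the blowup to be minimal and hence static --- that rules out a grim-reaper-type limit, the very phenomenon that does occur for the non-almost-calibrated equivariant spheres of \cite{Savas-Halilaj2018}. The remaining points (connectedness of $\Sigma$, and the bookkeeping of orientations and asymptote directions in the last paragraph) are routine by comparison.
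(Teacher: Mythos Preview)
Your approach is essentially the paper's: feed Theorem~\ref{thm-typei} into Corollary~\ref{cor-typeii}, split into bounded/unbounded centre via Lemma~\ref{lem-s1convergence}, rule out the unbounded case by flatness, and classify via Lemma~\ref{lem-speciallag}. Your treatment of the escaping-centre case is in fact slightly cleaner than the paper's --- you argue via the product splitting $\Sigma_0\times\mathbb{R}^{n-1}$ with $\Sigma_0\subset\mathbb{C}$ minimal, whereas the paper carries out an explicit determinant computation of the Lagrangian angle using the translation-invariant tangent directions to reach the same conclusion.

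There is, however, one genuine gap in your final paragraph. Your claim that ``Lawlor necks of Lagrangian angle $\overline\theta$ [form] a one-parameter family in the scale $B$'' is false for $n\geq 3$: since $\overline\theta$ is only determined modulo $2\pi$, replacing $\overline\theta$ by $\overline\theta+2\pi k$ in the formula $r(\alpha)=B/\sqrt[n]{\sin(\overline\theta-n\alpha)}$ rotates the profile curve by $2\pi k/n$, producing genuinely distinct Lawlor necks with the same Lagrangian angle. Consequently, your appeal to ``the uniqueness clause of Theorem~\ref{thm-typei}'' to identify the asymptotes with $P_1\cup P_2$ does not work --- that clause asserts uniqueness of the Type~I blowup \emph{of the flow}, not that every transverse special Lagrangian pair of planes with angle $\overline\theta$ at $\tfrac{\pi}{n}$ coincides with $P_1\cup P_2$. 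The paper closes this gap with Lemma~\ref{lem-cone}: each connected component of the profile curve of $L_t$ (hence of every rescaling centred at the origin, hence of $\Sigma-c$) lies in a fixed cone of angular width strictly less than $\tfrac{2\pi}{n}$. Exactly one of the candidate Lawlor necks fits in that cone, and its asymptotes are the unique pair of lines with angle $\overline\theta$ at $\tfrac{\pi}{n}$ contained in the cone --- which is $P_1\cup P_2$. Invoking Lemma~\ref{lem-cone} at this point repairs both the identification of the asymptotes and the independence from the rescaling sequence.
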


\begin{proof}
	Consider a sequence of Type II rescalings, $L_{\tau}^{(x_i,t_i)}$, that converge to a Type II blowup $L_\tau^\infty$, and denote by $A_i$ the rescaling factors. We first show that we may assume $x_i \in \mathbb{C}{\times}\{0\}^{n-1}$, so that we may apply the theory from Section \ref{sec-s1}.
	Apply a sequence of rotations $R^i(\cdot)$ centred on the origin so that $R_i(x_i) \in \mathbb{C}{\times}\{0\}^{n-1}$, and pass to a subsequence so that this sequence of rotations converges in $C^\infty$ to a rotation $R_\infty$. Then, since we are working with equivariant Lagrangians,
	\begin{align*}
		L^{(R_i(x_i),t_i)}_\tau \, &= \, A_i \left( L_{t_i + A_i^{-2}\tau} - R_i(x_i)\right)\\
		&= \, A_i \left( R_i(L_{t_i + A_i^{-2}\tau}) - R_i(x_i)\right)\\
		&= \, R_i(L^{(x_i,t_i)}_\tau) \rightarrow\, R_\infty(L^\infty_\tau),
	\end{align*}
	so up to a rotation we obtain the same limit if we use the sequence $R_i(x_i)$ instead of $x_i$.
	
	Now, we know from Corollary \ref{cor-typeii} that the Type II blowup $L^\infty_\tau$ is a special Lagrangian, i.e.\ a static flow with $\theta = \overline{\theta}$. So we only need look at $L^\infty_0$ to understand the entire flow. There are now two cases to consider: either the image of the origin $-A_ix_i$ remains bounded under the Type II rescalings, or $|A_ix_i|$ diverges to $\infty$.
	
	If $|A_ix_i| \rightarrow \infty$, then (on passing to a subsequence) $L_0^{\infty}$ is invariant under translations $T_v$ (as defined in Section \ref{sec-s1}) by Lemma \ref{lem-s1convergence}, where $v = ze_1 := \lim_{i \rightarrow \infty} \frac{-A_ix_i}{|A_ix_i|} \in \mathbb{C}{\times}\{0\}^{n-1}$, for a constant $z \in \mathbb{C}$. Therefore, referring to (\ref{eq-defTtranslation}) for the definition of $T_\nu)$ at the point $l^{\infty}_0(s)e_1 \in L^\infty_0 \cap (\mathbb{C}{\times}\{0\}^{n-1})$
	\[ \frac{\p}{\p \lambda}\bigg|_{\lambda=0} T_v (\alpha, \lambda, \gamma(s)) \, = \, -z\alpha \]
	is a tangent direction, for any $\alpha \in S^{n-2}$. In particular, $z e_i = (0,\ldots, z, \ldots, 0)$ is a tangent direction at every point in $L^\infty_0 \cap (\mathbb{C}{\times}\{0\}^{n-1})$ for all $i \neq 1$. So, if the profile curve of the Type II blowup is $l^\infty_0(s) = a(s) + ib(s)$, 
	\begin{align*}
		\arg \left( 
		\begin{vmatrix}
			a' + ib' && 0 && \cdots && 0 \\
			0 && z && \cdots && 0 \\
			\vdots && \vdots && \ddots && \vdots \\
			0 && 0 && \cdots && z
		\end{vmatrix} \right) = \overline{\theta}
		\implies \arg(a' + ib') \, = \, \overline{\theta} - (n-1)\arg(z),
	\end{align*}
	which implies that $l^\infty_0$ is a straight line through the origin, and that $L^\infty_0$ is an n-plane. But since Type II blowups must satisfy $\max|A|^2 = 1$, this is a contradiction.
	
	It follows that $|A_ix_i|$ remains bounded. In this case it follows from Lemma \ref{lem-s1convergence} and $C^{\infty}_{loc}$-convergence of the Type II rescalings that $L^{\infty}_{0}$ is an $O(n)$-equivariant submanifold of $\mathbb{C}^n$, after a translation by an element of $\mathbb{C}{\times}\{0\}^{n-1}$. Therefore by Lemma \ref{lem-speciallag}, it must be a Lawlor neck $\Sigma_{\operatorname{Law}}$.
	The uniqueness follows from Lemma \ref{lem-cone}, as there is only one Lawlor neck with $\sup|A| = 1$ and Lagrangian angle $\overline{\theta}$ that fits in the cone containing $\gamma$.
\end{proof}

\subsection{Intermediate Blowups} \label{sec-inter}

\begin{figure}[t]
	\centering
	\includegraphics[width=0.45\linewidth]{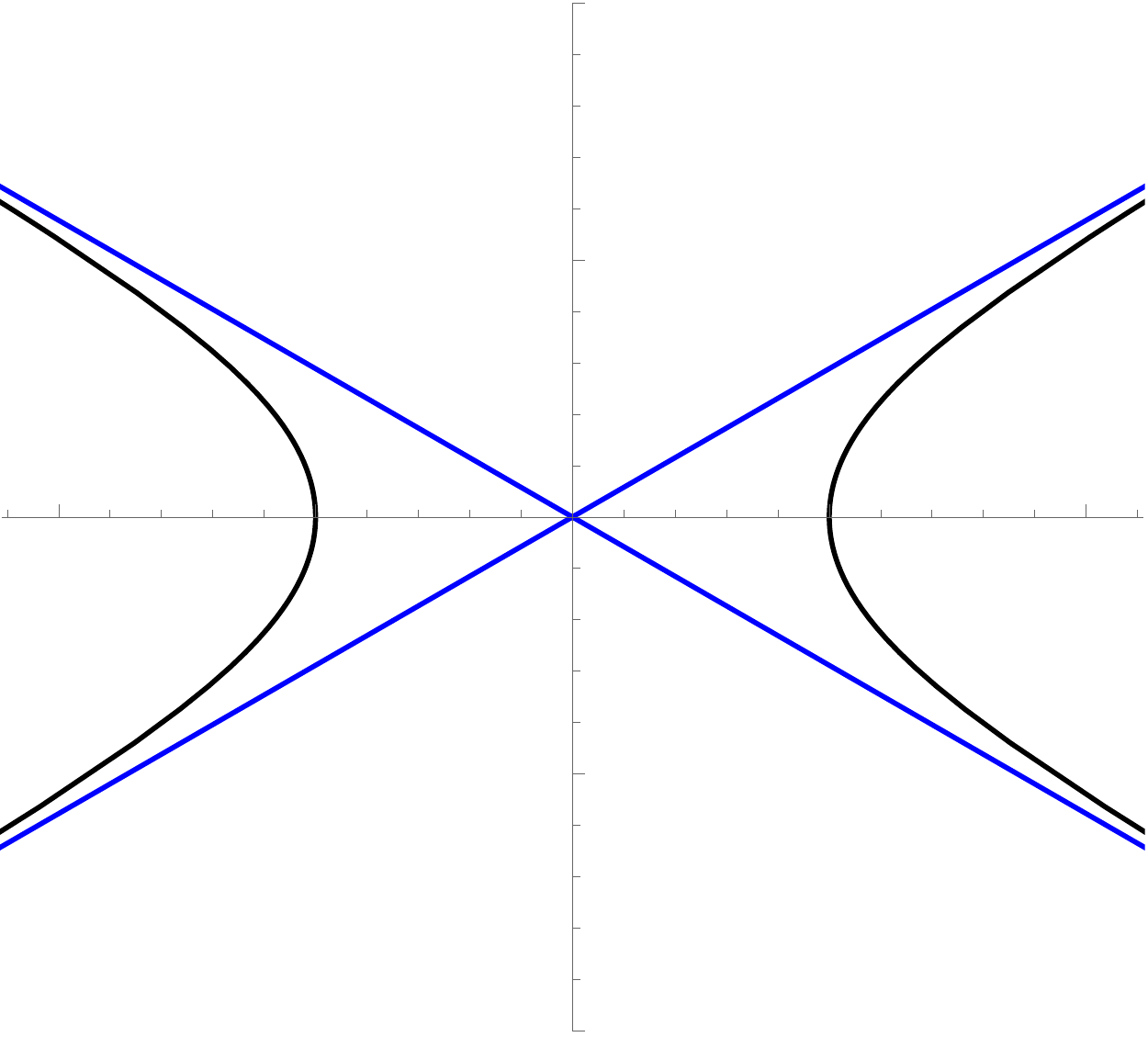}
	\caption{The profile curves of the Type I and Type II blowups for an equivariant LMCF in $\mathbb{C}^3$.}
	\label{fig-type1type2}
\end{figure}

Finally, we examine the behaviour between the Type I and Type II scales of a finite time singularity of our LMCF. Assume our flow forms a singularity at the space-time point $(0,0)$, consider a sequence of times $t_i \rightarrow 0$ from a Type II rescaling sequence (i.e.\ satisfying (\ref{eq-typeiicondition})), and let $A_i$ be the maximum value of the second fundamental form over $L_{t_i}$, as before. Let $\lambda_i \in \mathbb{R}$ be a sequence diverging to $+\infty$ such that
\[ \delta_i := \frac{\lambda_i}{A_i} \rightarrow 0, \quad -\lambda_i^2 t_i \rightarrow \infty.\]
Then we define the \textbf{intermediate rescalings} corresponding to the sequence $(t_i,\lambda_i)$ as
\[ L_\tau^{t_i,\lambda_i}\, := \, \lambda_i L_{t_i + \lambda_i^{-2}\tau}. \]
Note that we need not translate the rescaling to centre on the point of highest curvature - we proved in Theorem \ref{thm-typeii} that the origin remains bounded along the sequence, so any convergence will be unaffected by such translations. The assumptions that $\delta_i := \frac{\lambda_i}{A_i} \rightarrow 0$ and $-\lambda_i^2t_i \rightarrow \infty$ are made since otherwise the resulting blowup will just be a scaling of a Type II blowup or a time-translation of a Type I blowup respectively. We prove the following:

\begin{theorem}\label{thm-intermediate}
	Let $L_t$ be an almost-calibrated, connected, $O(n)$-equivariant mean curvature flow in $\mathbb{C}^n$ with planar asymptotics. Assume that $L_t$ forms a singularity at the origin at time $t=T$, with Type I blowup $P_1 \cup P_2$. Then for any $R, \veps$ and finite time interval $I$, there exists a subsequence such that $L_\tau^{t_i,\lambda_i} \cap (B_R{\setminus} B_\veps)$ may be expressed as a graph over $P_1 \cup P_2$ for $\tau \in I$, and this graph converges in $C^{1;0}$ to $P_1 \cup P_2$.
\end{theorem}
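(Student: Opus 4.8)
The plan is to transplant the Type~I analysis of Theorem~\ref{thm-typei} to the intermediate scale. The two facts that make this possible are already in hand. First, since $-\lambda_i^2 t_i\to\infty$ and we are rescaling about the origin, Theorem~\ref{thm-typeIIangle} applies with $x_i\equiv O$ and gives that the Lagrangian angle $\theta^i_\tau$ of $L^{t_i,\lambda_i}_\tau$ converges uniformly on $B_{2R}\times I$ to the Lagrangian angle $\overline\theta$ of the Type~I blowup $P_1\cup P_2$. Second, by Theorem~\ref{thm-typeii} the highest-curvature point of $L_{t_i}$ stays within bounded distance of the origin after rescaling by $A_i$, so $\max|A|$ on $L^{t_i,\lambda_i}_0$ equals $\delta_i^{-1}\to\infty$ and is attained at a point $\lambda_i x_i\to O$; by the Type~II condition this curvature concentration near $O$ persists for all $\tau\in I$. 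The routine setup to record first: rescaling about $O$ keeps the flows $O(n)$-equivariant about $O$; almost-calibratedness is scale- and flow-invariant, so they are uniformly almost-calibrated; planar asymptotics and Huisken's monotonicity (Theorem~\ref{thm-monotonicity}) give uniform area ratios; and Lemma~\ref{lem-cone}, whose conclusion is scale- and flow-invariant, confines every profile curve $\gamma^i_\tau$ to one fixed cone $\mathcal{C}$ of angular width $<\tfrac{2\pi}{n}$.

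The core step is to identify, for each fixed $\tau\in I$, the Radon-measure limit $V$ of a subsequence of $L^{t_i,\lambda_i}_\tau$ in $B_{2R}$. By the uniform convergence of $\theta^i_\tau$, the limit $V$ has constant Lagrangian angle $\overline\theta$, hence is a stationary (special Lagrangian) integral varifold, $O(n)$-equivariant about $O$ and supported in $\mathcal{C}$; by Lemma~\ref{lem-speciallag} its profile curve has smooth pieces which are lines through $O$ or Lawlor arcs. The curvature concentration near $O$ forces $O$ to be a genuine singular point of $V$: were $V$ smooth at $O$ (in particular a single plane through $O$, or a Lawlor neck, which avoids $O$ altogether), White regularity (Theorem~\ref{thm-white}) together with the measure convergence would bound $|A|$ on $L^{t_i,\lambda_i}_\tau$ near $O$, contradicting $\max|A|=\delta_i^{-1}\to\infty$ there. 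This plays the role that Wang's theorem (Theorem~\ref{thm-ACLMCFsing}) played for the Type~I blowup. I would then copy the remaining case analysis of Theorem~\ref{thm-typei}: the almost-calibrated bound through Lemma~\ref{lem-cone} kills three or more rays through $O$; reflective symmetry through $O$ (under which $\gamma^i_\tau$ and $-\gamma^i_\tau$ limit to the \emph{same} line) forces a two-line limit to come from a single connected component, i.e.\ to be a transverse pair spanning $\tfrac{\pi}{n}$; and Lemma~\ref{lem-main}, applied to $L^{t_i,\lambda_i}_\tau$ recentred and rescaled at a point of $P_j$ of radius in $(\veps,R)$ — a point far from the centre of rotation $O$, as the hypotheses demand — rules out Lawlor-arc pieces and multiplicity greater than one on the rays in $\{\veps<|z|<R\}$, hence (as $\veps$ is arbitrary) everywhere away from $O$. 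What survives is a transverse pair of unit-density planes through $O$ with Lagrangian angle $\overline\theta$, and since $\mathcal{C}$ has width $<\tfrac{2\pi}{n}$ there is only one such pair in $\mathcal{C}$, namely $P_1\cup P_2$. As this is independent of the subsequence, $L^{t_i,\lambda_i}_\tau\to P_1\cup P_2$ as Radon measures in $B_{2R}$ for every $\tau\in I$.

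From here the smooth convergence is standard. Because $P_1$ and $P_2$ meet only at $O$, there is $c=c(n)>0$ so that $P_1\cup P_2$ has Gaussian density $1$ at every scale $r<c\veps$ around any point of $\overline{B_R}\setminus B_\veps$; by the measure convergence and upper semicontinuity of Gaussian density, $\Theta(L^{t_i,\lambda_i},Y,r)\le 1+\veps'$ for all $Y$ in a parabolic neighbourhood of $(B_R\setminus B_\veps)\times I$ and all large $i$, so White's theorem bounds $|A|$ and, via elliptic estimates, all $|\nabla^k A|$ there, uniformly in $i$. Arzel\`a--Ascoli then gives smooth-in-space subconvergence of the time-slices, necessarily to $P_1\cup P_2$, so for large $i$ each slice $L^{t_i,\lambda_i}_\tau\cap(B_R\setminus B_\veps)$ is a normal graph over $P_1\cup P_2$ whose defining function is small in $C^1$; finally the curvature bound together with $\partial_\tau(\text{position})=\vec H=J\nabla\theta$ (with $|\nabla\theta|$ bounded on the annulus) gives equicontinuity in $\tau$, and a space-time Arzel\`a--Ascoli argument promotes the convergence to $C^{1;0}$, uniform for $\tau\in I$.

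The hard part will be the varifold identification in the second step, specifically justifying the use of Lemma~\ref{lem-main} and of the input that only unions of planes can appear: Neves' Theorems~\ref{thm-a} and \ref{thm-b} (hence the hypotheses of Lemma~\ref{lem-main}) are stated for Type~I rescalings, not for the intermediate rescalings, so their local conclusions must be re-derived here. The tool I expect to use is Huisken's monotonicity (Theorem~\ref{thm-monotonicity}) together with the scale-invariance Lemma~\ref{lem-huiskenscaleinvariance}: pushing the Gaussian density of $L^{t_i,\lambda_i}$ backward to a comparison time with $t_i+\lambda_i^{-2}\sigma_i=-\mu_i^{-2}$, where $\mu_i\to\infty$ and $\mu_i/\lambda_i=(2(-\lambda_i^2 t_i))^{-1/2}\to 0$, identifies the resulting integral with one over the Type~I rescaled flow $L^{\mu_i}_{-1}$, and hence controls these densities by those of $P_1\cup P_2$; combined with the constancy of the limiting Lagrangian angle this forces $V$ to be a finite union of special Lagrangian cones, after which Lemma~\ref{lem-speciallagcones} reduces matters to the union-of-planes case handled above.
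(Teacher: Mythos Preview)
Your approach differs substantially from the paper's, and has a gap.

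The paper does not use varifold limits, Lemma~\ref{lem-main}, White regularity, or any re-derivation of Neves' Theorems~A and~B here. Instead it argues directly on the profile curve. Once $|\theta^i-\overline\theta|<\veps$ uniformly on $B_R\times I$ (Theorem~\ref{thm-typeIIangle}), the polar ODE $\dot r=r\cot(\theta-n\alpha)$ can be integrated explicitly to sandwich $\gamma^i_\tau$ between two Lawlor arcs through the unique point $b^i_\tau$ where $\gamma^i_\tau$ crosses the positive real axis. This yields (Lemma~\ref{lem-interconv}): if $b^i_{\tau_i}\to 0$ then $\gamma^i_{\tau_i}\to$ the pair of half-lines in $C^1$ on $B_R\setminus B_\veps$; if $b^i_{\tau_i}\to B>0$ then $\gamma^i_{\tau_i}\to$ a Lawlor neck. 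The whole theorem thus reduces to showing $b^i_\tau\to 0$ uniformly for $\tau\in I$. Since $b^i_0\to 0$ by the Type~II convergence (Theorem~\ref{thm-typeii}, $\delta_i\to 0$), it suffices to show $b^i_\tau$ is $C^0$-Cauchy in $\tau$. If not, pick $\tau_i$ with $b^i_{\tau_i}=B>0$; then by Lemma~\ref{lem-interconv} the curve must travel a definite Hausdorff distance $d(\veps)>0$ between times $\tau_i$ and $0$, whereas
\[
\left|\int_{\tau_i}^0\!\!\int_{\gamma^i_\tau([p_1,p_2])} H\cdot\nu\,ds\,d\tau\right|
=\left|\int_{\tau_i}^0\big(\theta^i_\tau(p_2)-\theta^i_\tau(p_1)\big)\,d\tau\right|\le 2|I|\,\veps,
\]
a contradiction for small $\veps$.

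Your gap is the assertion that ``by the Type~II condition this curvature concentration near $O$ persists for all $\tau\in I$''. Condition~(\ref{eq-typeiicondition}) gives an \emph{upper} bound on $|A_t|$ for $t\neq t_k$, not a lower bound; you therefore know $\max|A|\to\infty$ on $L^{t_i,\lambda_i}_\tau$ only at $\tau=0$. At $\tau\neq 0$ your Radon limit $V$ could be a smooth Lawlor neck, and nothing in your plan excludes this. The time-equicontinuity you invoke at the end (via $\partial_\tau F=J\nabla\theta$) is applied only \emph{after} claiming the limit is $P_1\cup P_2$ at every $\tau$, which is exactly the step in doubt. Note that the paper's integral-of-$\vec H$ estimate is essentially the same idea as your equicontinuity step, but deployed \emph{before} the limit is identified, to propagate $b^i_\tau\to 0$ from $\tau=0$ to all of $I$; moving that argument forward would repair your proof, and would simultaneously render the varifold machinery and the re-derivation of Neves' theorems (which you correctly flag as the hard part) unnecessary.
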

\begin{proof}
	Extend $I$ so that it contains 0 in its interior, and pass to a subsequence so that the Type II rescalings centred at the space-time points $(O,t_i)$ converge smoothly to a Type II blowup. By Theorem \ref{thm-typeIIangle}, on the cylinder $B_R{\times}I$ the Lagrangian angle $\theta^i$ of the intermediate rescalings is converging uniformly to a constant $\overline{\theta}$, the same value as the Lagrangian angles of the Type I and Type II blowups. For convenience we assume $\overline{\theta} \, = \frac{\pi}{2}$, that the profile curve of the Type I blowup is the pair of lines at $\alpha=\frac{\pi}{2n}$ and $\alpha=-\frac{\pi}{2n}$ (and their reflections in $O$), and that the Type II blowup is the unique Lawlor neck with $\sup|A| = 1$ asymptotic to these planes, as in Figure \ref{fig-type1type2} (this can all be achieved by a single rotation of the plane $\mathbb{C}{\times}\{0\}^{n-1}$). 
	
	If $\veps$ is small enough, and we take $i$ large enough so that $|\theta^i - \tfrac{\pi}{2}| < \veps$, then on $B_R \times I$ there is at most one intersection of each component of $l^i$ with the real axis. Denote by $b_\tau^i$ the sequence of intersections on the positive real axis at time $\tau$, where it exists, and by $\gamma_\tau^i$ the component of $l^i_\tau$ containing $b^i_\tau$. We first prove that we have the expected convergence on individual time slices.
	\begin{lemma} \label{lem-interconv}
		Fix a sequence $\tau_i \in I$. 
		\begin{itemize}
			\item If $b^i_{\tau_i} \rightarrow 0$, then for all $\veps$, the profile curves $\gamma^i_{\tau_i}$ parametrised by arc-length converge in $C^1$ on $B_R{\setminus}B_\veps$ to the half-lines at $\alpha=\frac{\pi}{2n}$ and $\alpha=-\frac{\pi}{2n}$.
			\item If $b^i_{\tau_i} \rightarrow B > 0$, then the profile curves $\gamma^i_{\tau_i}$ parametrised by arc-length or by argument converge in $C^1$ on $B_R$ to the profile curve of the Lawlor neck $\sigma_{\operatorname{law}}$, with asymptotes given by these same lines.
		\end{itemize}
	\end{lemma}
	\begin{proof}
		Throughout, we suppress the subscript $\tau_i$, as nothing depends on it.
		
		We tackle case 2 first, so $b^j \rightarrow B > 0$. Take $N$ large enough such that on $B_R {\times}I$ for $j>N$, \[|\theta^j - \tfrac{\pi}{2}| < \veps, \quad |b^j -B| < \veps. \]
		Note that, close to $\alpha=0$, by the above condition on $\theta^j$, we may parametrise $\gamma^j$ by angle:
		\begin{align*}
			\gamma^j(\alpha) \, &= \, r^j(\alpha)e^{i\alpha} \\
			\implies \dot{\gamma}^j \, &= \, (\dot{r}^j + ir^j)e^{i\alpha}\\
			\implies \dot{r}^j \, &= \, r^j\cot(\theta^j -n\alpha),	
		\end{align*}
		where $\cot:(0,\pi)\rightarrow \mathbb{R}$. In fact, by this gradient equation, we see that it is parametrisable in this fashion for 
		\[\alpha \in \left( -\frac{\pi}{2n} + \frac{\veps}{n},\frac{\pi}{2n} - \frac{\veps}{n}  \right).\] Integrating the inequality obtained by using the bound on $\theta^j$, it follows that for $\alpha > 0$:
		\[ \frac{(B-\veps)\sqrt[n]{\sin\left(\tfrac{\pi}{2} + \veps\right)}}{\sqrt[n]{\sin\left(\tfrac{\pi}{2} + \veps - n\alpha\right)}} \, \leq \, r^j(\alpha) \, \leq \, \frac{B+\veps}{\sqrt[n]{\sin\left(\tfrac{\pi}{2} - \veps - n\alpha\right)}}. \]
		This implies that, as $\varepsilon \rightarrow 0$ and $j \rightarrow \infty$,
		\[ r^j\left(\frac{\pi}{2n}-\frac{\veps}{n}\right) \rightarrow \infty. \]
		An identical argument shows that the same is true for the $\alpha < 0$ half of the curve. Therefore, on $B_R$ the curve may be fully parametrised by angle for sufficiently large $j$, and so this parametrisation converges in $C^1$ to 
		\[ r^\infty \, = \, \frac{B}{\sqrt[n]{\sin\left(\tfrac{\pi}{2} - n\alpha\right)}}, \]
		which is the Lawlor neck described in the statement of the lemma.
		
		Now assume that $b^j \rightarrow 0$. By the same method as above, we see that the curve is parametrisable by angle for the same range of $\alpha$, and for $\alpha > 0$ in this range,
		\[ \frac{b^j\sqrt[n]{\sin\left(\tfrac{\pi}{2} + \veps\right)}}{\sqrt[n]{\sin\left(\tfrac{\pi}{2} + \veps - n\alpha\right)}} \, \leq \, r^j(\alpha) \, \leq \, \frac{b^j}{\sqrt[n]{\sin\left(\tfrac{\pi}{2} - \veps - n\alpha\right)}}. \]
		Since $b^j\rightarrow 0$, for each $\veps$ we may choose $N$ large so that for $j>N$, $r^j<\veps$ on the angle range $\left( -\frac{\pi}{2n} + \veps, \frac{\pi}{2n} - \veps \right)$ and $|\theta^j - \frac{\pi}{2}| < \veps$ on $B_R {\times}I$. Therefore for $j > N$, the curve enters the cone $\Gamma := \{\alpha \in \left( \frac{\pi}{2n}- \veps, \frac{\pi}{2n} + \veps \right)\}$ within the ball $B_\veps$. Now we show that it remains there while in $B_R$ (an identical argument holds for the cone on the other side of the real axis, $\Gamma' := \{\alpha \in \left( -\frac{\pi}{2n}- \veps, -\frac{\pi}{2n} + \veps \right)\}$).
		Once the curve has entered the cone $\Gamma$, if it intersected the line $\alpha = \frac{\pi}{2n}- \veps$ again, then at this point we would have
		\begin{align*}
			\arg(\dot{\gamma}^j) \, &\leq \, \frac{\pi}{2n} - \veps \, 
			\quad \implies\quad  \theta^j \leq  \frac{\pi}{2} - n\veps,
		\end{align*}
		which is a contradiction. A similar contradiction is reached if we assume that the curve intersects the line $\alpha = \frac{\pi}{2n} + \veps$, therefore the curve must remain in the cone $\Gamma$ once it enters. 
		Now, parametrising the curve by arc-length so that
		\begin{align*}
			\gamma^j(s) \, = \, r^j(s)e^{i\alpha^j(s)} \,
			\quad \implies\, \quad \dot{\gamma}^j\, = \, e^{i (\theta^j - (n-1)\alpha^j)};
		\end{align*} 
		limiting $\veps \rightarrow 0$ shows that our curves $\gamma^j$ converge in $C^1$ away from the origin to the specified half-lines.
	\end{proof}
	
	To finish the proof, we need to show that $b^i_\tau \rightarrow 0$ uniformly in $I$. The above lemma will then show that our intermediate rescalings converge uniformly to the pair of planes we expect. We know from the Type II convergence that $b^i_0 \rightarrow 0$, and so it suffices to show that the value $b^i$ is a $C^0$-Cauchy sequence as a function of time. Intuitively, the argument is that if the Lagrangian angle is converging uniformly to a constant, then the `average' value of $H$ also is. This puts a limit on how far the profile curve can travel between times, which prevents $b^i$ converging to two different values.
	
	\begin{lemma}
		$b^i_\tau$ is a $C^0$-Cauchy sequence  of functions in $\tau$, converging to $0$.
	\end{lemma}
	\begin{proof}
		Assume for a contradiction that it isn't a Cauchy sequence. We know that $b^i_0 \rightarrow 0$, so this means that there exists $B \in \mathbb{R}^+$, $B < \frac{R}{2}$ such that, on passing to a subsequence,
		\[ \sup_{\tau \in I} |b^{i}_\tau|  > B.  \]
		Take a sequence $\tau_i \in I$ such that $b^i_{\tau_i} = B$; we assume for notational convenience that $\tau_i$ is negative. Denote by $\sigma_B$ the profile curve of the Lawlor neck intersecting the real axis at $B$, and by $v$ the two half-lines, both as described in Lemma \ref{lem-interconv}. Then by this lemma we may take $N$ sufficiently large such that for all $i \geq N$:
		\begin{itemize}
			\item $|\theta- \frac{\pi}{2}|<\veps$ in $B_R{\times}I$,
			\item $d_{Haus}(\gamma^{i}_{\tau_i}\cap B_R, \, \sigma_B \cap B_R) < \veps$,
			\item $d_{Haus}(\gamma^{i}_{0} \cap B_R, \, v \cap B_R) < \veps$.
		\end{itemize}
		If we denote by $B_\veps(A)$ the $\veps$-fattening of the set $A$, then this means that for $i \geq N$, $\gamma^{i}_0 \subset B_\veps(v)$ and $\gamma^{i}_{\tau_i} \subset B_\veps(\sigma_B)$. Let $d(\veps) \, := \, d_{Haus}(B_\veps(\sigma_B)\cap B_R,\, B_\veps(v)\cap B_R)$, and notice that it is a decreasing function of $\veps$.
		
		Taking $\varepsilon$ sufficiently small, we may find $p_1\leq p_2 \in \mathbb{R}$ such that $\gamma^i_\tau(p_1),\gamma^i_\tau(p_2) \in B_R$ for all $\tau \in I$. Such points must exist, else by an identical argument to the one given below, the integral of $\vec{H}$ over an escaping region of the curve would be large, contradicting the uniform bound on $\theta$. Now take $\nu$ to be the outward pointing normal and $s$ the arc-length parameter. Then since the flow must travel from $B_\veps(\sigma_B)$ to $B_\veps(v)$ between times $0$ and $\tau_i$, it follows by the definition of mean curvature flow that
		\begin{align*}
			\forall p \in [p_1,p_2], \, \, \, \int_{\tau_i}^0 H(p)\cdot \nu(p) \, d\tau \, &\geq \, d(\veps) \\
			\implies \int_{\tau_i}^0 \int_{\gamma_\tau^i([p_1,p_2])} H \cdot \nu \, ds \, d\tau \, &\geq \, d(\veps) \cdot \min_{\tau \in [\tau_i,0]} \mathcal{H}^1(\gamma^i_{\tau}[p_1,p_2]).
		\end{align*}
		But on the other hand, since $\vec{H} = J\nabla\theta$,
		\begin{align*}
			\left| \int_{\tau_i}^0 \int_{\gamma_\tau^i([p_1,p_2])} H \cdot \nu \, ds \, d\tau \, \right| \, &= \, \left| \int_{\tau_i}^0 \int_{\gamma_\tau^i([p_1,p_2])} \frac{\p \theta^i_\tau}{\p s} \, ds \, d\tau \, \right| \\ \,&=\, \left| \int_{\tau_i}^0 \theta^i_\tau(p_2) - \theta^i_\tau(p_1) \, d\tau \, \right| \,	\leq \, -2\tau_i \veps,
		\end{align*}
		which is a contradictory inequality if $\varepsilon$ is taken small.
	\end{proof}
	This completes the proof of Theorem \ref{thm-intermediate}.
\end{proof}

\bibliography{paper7, library} 
\bibliographystyle{alpha}

\end{document}